\pgfplotsset{compat=1.15}
\newenvironment{namedproof}[1]{%
  \par\pushQED{\qed}%
  \renewcommand{\proofname}{#1}%
  \noindent\textit{\proofname.}\quad\ignorespaces
}{%
  \popQED\par\medskip
}
\newcommand\m[1]{\begin{bmatrix}#1\end{bmatrix}} 
\newcommand\sm[1]{\begin{bsmallmatrix}#1\end{bsmallmatrix}}
\newcommand{\lmfdb}[1]{%
  \href{https://www.lmfdb.org/EllipticCurve/Q/#1}{\texttt{#1}}%
}
\newcommand{\Z}{\mathbb{Z}}
\newcommand{\Q}{\mathbb{Q}}
\newcommand{\GL}{\textnormal{GL}}
\DeclareMathOperator{\Gal}{Gal}
\newtheorem{theorem}{Theorem}[]
\newtheorem{lemma}[theorem]{Lemma}
\newtheorem{proposition}[theorem]{Proposition}
\newtheorem{corollary}[theorem]{Corollary}
\theoremstyle{definition}
\newtheorem{definition}[theorem]{Definition}
\theoremstyle{remark}
\newtheorem{remark}[theorem]{Remark}
\numberwithin{theorem}{section}
\begin{document}
\title{Torsion of $\Q$-curves over number fields of small odd prime degree} 
\author[Initial Surname]{Ivan Novak}
\date{\today}
\address{Faculty of Science, Department of Mathematics, Bijenička cesta 30, Zagreb, Croatia}
\email{ivan.novak@math.hr}
\thanks{The author was supported by the project “Implementation of cutting-edge research and its application as part of the Scientific Center of Excellence for Quantum and Complex Systems, and Representations of Lie Algebras“, PK.1.1.02, European Union, European Regional Development Fund and by the Croatian Science Foundation under the project no. IP-2022-10-5008.
}

\maketitle

\let\thefootnote\relax

\begin{abstract}
 We determine all groups which occur as torsion subgroups of $\Q$-curves defined over number fields of degrees $3$, $5$ and $7$.  In particular, we prove that every torsion subgroup of a $\Q$-curve defined over a number field of degree  $3,5$ or $7$ already occurs as a torsion subgroup of an elliptic curve with rational $j$-invariant.  As the quadratic case has been solved by Le Fourn and Najman, and the case of extensions of prime degree greater than $7$ has been solved by Cremona and Najman, this paper completes the classification of torsion of $\Q$-curves over number fields of prime degree. To solve our problem, we study how images of Galois representations of elliptic curves change upon applying an isogeny. Since Cremona and Najman proved that $\Q$-curves over an odd degree number field $K$ are exactly those which are $K$-isogenous to an elliptic curve with rational $j$-invariant, we can rely on known results about images of Galois representations of rational elliptic curves. 

 As a corollary of our results, we establish that the torsion subgroup of an elliptic curve over a number field $K$ of prime degree which is isogenous to an elliptic curve with rational $j$-invariant is equal to the torsion subgroup of some elliptic curve defined over a degree $p$ number field with rational $j$-invariant. 
\end{abstract} 

\bigskip

\section{Introduction}

\noindent
Let $E/K$ be an elliptic curve, where $K$ is a number field. By the Mordell--Weil theorem, the group $E(K)$ is finitely generated and hence of the form $T \times \mathbb Z^r$ for some nonnegative integer $r$, where $T$ is a finite Abelian group called the \emph{torsion subgroup} of $E$.

A typical question one might ask is to determine all possible torsion subgroups of elliptic curves over a given number field. For elliptic curves over $\Q$, we have the following famous result by Mazur.

\begin{theorem}\label{mazur}\cite{Mazur}
    Let $E/\Q$ be an elliptic curve. Then $E(\Q)_{tors}$ is isomorphic to one of the following $15$ groups:
   \begin{align*}
            &\Z/N_1\Z && \text{ with } N_1=1,\ldots,10,12, \\
            &\Z/2\Z\oplus \Z/2N_2\Z && \text{ with } N_2=1,\ldots, 4.
        \end{align*}
        There exist infinitely many $\overline{\Q}$-isomorphism classes for each such torsion subgroup.
\end{theorem}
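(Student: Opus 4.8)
\proof
This is Mazur's theorem; I sketch how one proves it. The plan is to recast the statement as one about rational points on modular curves. For an integer $N\ge 1$ let $X_1(N)$ be the modular curve whose non-cuspidal rational points classify (roughly) pairs $(E,P)$ with $E/\Q$ an elliptic curve and $P\in E(\Q)$ of exact order $N$, and let $X_1(2,2N)$ be the analogue for the level structure $\Z/2\Z\oplus\Z/2N\Z$. Then a finite group $T$ in the statement occurs as $E(\Q)_{tors}$ for some $E/\Q$ exactly when the associated modular curve has a non-cuspidal rational point, and it is realized by infinitely many $\overline{\Q}$-isomorphism classes exactly when there are infinitely many such points with pairwise distinct $j$-invariant; since the forgetful map to the $j$-line $X(1)$ is finite, the latter holds as soon as the curve has infinitely many rational points. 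The finitely many CM elliptic curves over $\Q$ have bounded torsion and are dealt with by inspection, so the problem reduces to computing rational points on finitely many modular curves.

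First I would treat the groups that \emph{do} occur. A classical genus computation shows that $X_1(N)$ has genus $0$ for $N\le 10$ and $N=12$, and that $X_1(2,2N)$ has genus $0$ for $N\le 4$. Each of these curves has a rational cusp, hence is $\Q$-isomorphic to $\PP^1_\Q$ and carries infinitely many rational points; by the above they yield infinitely many $\overline{\Q}$-isomorphism classes. Writing down the universal family over each such $\PP^1_\Q$ (the classical parametrizations tabulated by Kubert) exhibits the $15$ families listed and gives the infinitude statement.

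It remains to show that the curves attached to the other potential level structures --- $X_1(N)$ for $N=11$ and for all $N\ge 13$, and $X_1(2,2N)$ for all $N\ge 5$ --- carry no rational points beyond cusps. I would split according to genus. Any of these that has genus $0$ must be an anisotropic conic: were there a single rational point, the curve would be $\PP^1_\Q$ and the corresponding group would be realized over $\Q$, contrary to assumption. The ones of genus $1$ --- among them $X_1(11)$, $X_1(14)$, $X_1(15)$ --- have Jacobians that are elliptic curves of Mordell--Weil rank $0$ over $\Q$ (checkable by a $2$-descent), so $X(\Q)$ is finite; one then verifies that every rational point is a cusp. The ones of genus $\ge 2$ form the substantial case, and here the tool is Mazur's theory of the Eisenstein ideal. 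In outline: writing $J$ for the Jacobian of such a curve $X$, there is a nonzero quotient $\widetilde J$ of $J$ --- the Eisenstein quotient, cut out by the Eisenstein ideal in the Hecke algebra --- with $\widetilde J(\Q)$ \emph{finite}; one fixes a prime $\ell$ of good reduction and a rational cusp $c$, and checks that the Abel--Jacobi map $X\to J$ based at $c$, composed with the projection $J\to\widetilde J$, is a \emph{formal immersion} at the reduction of $c$ modulo $\ell$. From this, together with the injectivity of $\widetilde J(\Q)\hookrightarrow\widetilde J(\F_\ell)$, one deduces that $c$ is the only rational point of $X$ reducing to $c$ modulo $\ell$; a short further argument (with a well-chosen $\ell$, or with two primes) then forces $X(\Q)$ to consist of cusps. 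Applied to prime level this shows uniformly that no elliptic curve over $\Q$ has a rational point of prime order $\ge 11$; the remaining prime-power and mixed cases --- a finite explicit list, including $X_1(13)$, $X_1(16)$, $X_1(18)$, $X_1(25)$, $X_1(27)$ --- are handled the same way.

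The genuinely hard part is this last step: proving that the Eisenstein quotient has Mordell--Weil rank $0$ over $\Q$ --- which rests on the arithmetic of congruences between cusp forms and Eisenstein series --- and verifying the formal-immersion criterion. This is the technical core of \cite{Mazur}, which I would invoke rather than reprove.
\endproof
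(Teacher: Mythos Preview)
The paper does not prove this statement at all: Theorem~\ref{mazur} is quoted as background with a citation to \cite{Mazur}, and the paper immediately moves on. So there is no ``paper's own proof'' to compare against beyond the bare reference.

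Your sketch is a faithful outline of the standard argument and would be appropriate if a proof were required. Two minor remarks. First, the ``genus $0$ anisotropic conic'' case you mention is in fact empty: every $X_1(N)$ with $N=11$ or $N\ge 13$, and every $X_1(2,2N)$ with $N\ge 5$, already has genus $\ge 1$, so no conic argument is needed. Second, historically the composite-level curves you list ($X_1(13)$ is prime level, but $X_1(16)$, $X_1(18)$, $X_1(25)$, $X_1(27)$, and the relevant $X_1(2,2N)$) were disposed of by earlier work of Ogg, Ligozat, Kubert and others via direct rank/descent computations on their Jacobians; Mazur's Eisenstein-quotient machinery is really only essential for the uniform treatment of prime level $p\ge 11$. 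None of this affects correctness---your sketch simply attributes to the Eisenstein method a few cases that admit more elementary proofs.
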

In this article, we determine all possible torsion subgroups of $\Q$-curves over number fields of degree $3$, $5$ and $7$. Let us first define $\Q$-curves. For a field $K$, we will denote its absolute Galois group $\Gal(\overline K/K)$ by $G_K$.
\begin{definition}
We say an elliptic curve $E$ over a number field $K$ is a \emph{$\Q$-curve} if it is $\overline K$-isogenous to all its $G_\Q$-conjugates. 
\end{definition}

For example, all elliptic curves isogenous to an elliptic curve with rational $j$-invariant are $\Q$-curves. Indeed, if $E/K$ is $\overline K$-isogenous to an elliptic curve $E'$ with rational $j$-invariant, then there is a twist $E''$ of $E'$ which is defined over $\Q$, and an isogeny $\varphi: E \to E''$. Then there is also an isogeny $\varphi^\sigma: E^\sigma \to (E'')^\sigma$, but $(E'')^\sigma=E''$, and the composition of $\varphi$ and the dual of $\varphi^\sigma$ is an isogeny from $E$ to $E^\sigma$.

Over number fields of odd degree, Cremona and Najman have proven that all non-CM $\Q$-curves arise in this way, with the isogeny even being defined over the base field of the $\Q$-curve.

\begin{proposition}\label{isog to rational j}\cite[Theorem 2.7.]{cremona-najman}
If $E$ is a non-CM $\Q$-curve defined over an odd degree number field $K$, then $E$ is $K$-isogenous to an elliptic curve $E'$ with rational $j$-invariant.    
\end{proposition}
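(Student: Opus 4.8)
The plan is to attach to $E$ the standard obstruction class from the theory of $\Q$-curves, show it vanishes because $[K:\Q]$ is odd, and then upgrade the resulting $\overline{\Q}$-isogeny to one defined over $K$.

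First I would reduce to a purely geometric statement: it suffices to show that the $\overline{\Q}$-isogeny class of $E$ contains an elliptic curve with rational $j$-invariant. Indeed, if $j(E)\in\Q$ we are done with $E'=E$; otherwise, for the non-CM curve $E$ and any $\overline{\Q}$-isogenous curve $F$ there is a unique cyclic subgroup $C\le E(\overline{\Q})$ with $E/C\cong F$, namely the kernel of the minimal-degree isogeny $E\to F$ (it is cyclic since $\operatorname{Hom}_{\overline{\Q}}(E,F)\cong\Z$ and a generator cannot factor through a multiplication map). Suppose now $j(F)\in\Q$. Then for $\sigma\in G_K$ the subgroup $\sigma(C)$ is cyclic, $E^\sigma=E$ gives $E/\sigma(C)=(E/C)^\sigma$, and $j\big((E/C)^\sigma\big)=\sigma(j(F))=j(F)$, so $(E/C)^\sigma\cong E/C$ and uniqueness forces $\sigma(C)=C$. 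Hence $C$ is $G_K$-stable, $E/C$ is defined over $K$ with $j(E/C)=j(F)\in\Q$, and $E\to E/C$ is a $K$-isogeny.

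For the geometric statement I would use the obstruction class. As $E$ is non-CM, $\operatorname{End}_{\overline{\Q}}(E)\otimes\Q=\Q$, so choosing for each $\sigma\in G_\Q$ an isogeny $\mu_\sigma\colon E^\sigma\to E$ produces $c(\sigma,\tau):=\mu_\sigma\circ\mu_\tau^\sigma\circ\mu_{\sigma\tau}^{-1}\in\Q^\times$, a $2$-cocycle whose class $[c_E]\in H^2(G_\Q,\Q^\times)$ (trivial action) is independent of the choices; by the theory of $\Q$-curves developed by Ribet and Quer, $[c_E]=0$ precisely when $E$ is $\overline{\Q}$-isogenous to an elliptic curve defined over $\Q$, equivalently to one with rational $j$-invariant. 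Now $[c_E]$ is annihilated in two ways. Applying $\deg$ gives $c(\sigma,\tau)^2=\deg c(\sigma,\tau)=\deg\mu_\sigma\cdot\deg\mu_\tau\cdot(\deg\mu_{\sigma\tau})^{-1}$, which is the coboundary of $\sigma\mapsto\deg\mu_\sigma$, so $2[c_E]=0$. And since $E$ is defined over $K$ we may take $\mu_\sigma=\mathrm{id}$ for every $\sigma\in G_K$, so $\operatorname{res}^{G_\Q}_{G_K}[c_E]=0$; applying corestriction yields $[K:\Q]\cdot[c_E]=\operatorname{cores}\operatorname{res}[c_E]=0$. Since $[K:\Q]$ is odd, $[c_E]=0$, which gives the geometric statement.

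I expect the only substantial external input to be the cited equivalence "$[c_E]=0$ if and only if $E$ is $\overline{\Q}$-isogenous to an elliptic curve over $\Q$"; the two annihilations are elementary (the degree trick and restriction--corestriction), while the descent in the reduction step comes down to the uniqueness of cyclic isogenies between non-CM curves. A more combinatorial route is also possible, using that the $\overline{\Q}$-isogeny class $\mathcal C$ of $E$ is a finite $G_\Q$-stable set of $j$-invariants with a tree structure at each prime and that the $G_\Q$-orbit of $[E]$ has odd size $[\Q(j(E)):\Q]\mid[K:\Q]$; but the cohomological argument above isolates the role of oddness most cleanly.
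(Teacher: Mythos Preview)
The paper does not contain a proof of this proposition: it is stated as a citation of \cite[Theorem~2.7]{cremona-najman} and is used as a black box throughout. So there is no ``paper's own proof'' to compare against.

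Your argument is nonetheless correct and self-contained modulo the Ribet--Quer criterion you invoke. The reduction step is fine: for non-CM $E$ the uniqueness of the cyclic kernel realizing a given $\overline{\Q}$-isomorphism class is exactly the statement that a cyclic isogeny in $\operatorname{Hom}_{\overline{\Q}}(E,F)\cong\Z$ must be a generator, and the Galois-stability of $C$ then follows from $j(F)\in\Q$ just as you wrote. The cohomological step is also clean: $2[c_E]=0$ from $\deg$ and $[K:\Q]\cdot[c_E]=0$ from restriction--corestriction combine with $[K:\Q]$ odd to kill the obstruction.

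For context, the approach in Cremona--Najman (which you allude to at the end) is the combinatorial one: they analyze the $G_\Q$-action on the finite set of $j$-invariants in the isogeny class, using the tree structure of $\ell$-isogeny graphs at each prime $\ell$, and locate a $G_\Q$-fixed vertex by a parity/centre-of-tree argument. Your cohomological route trades that explicit combinatorics for the single cited equivalence ``$[c_E]=0$ iff $E$ is $\overline{\Q}$-isogenous to a curve over $\Q$''; it is shorter and makes the role of oddness transparent, at the cost of importing more theory. Either way the descent from $\overline{\Q}$ to $K$ is the same uniqueness-of-cyclic-kernel argument you gave.
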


\begin{remark}
    As any isogeny is a composition of a cyclic isogeny and the multiplication--by--$m$ map for some integer $m$, we may assume that the isogeny from the above proposition is cyclic.
\end{remark}

The classification of torsion of base changes of rational elliptic curves, and more generally of elliptic curves with rational $j$-invariant, is known for cubic, quintic and septic fields.

\begin{theorem}\cite[Theorem 1.]{basechanges}\label{basechanges}
    Let $E/\Q$ be a rational elliptic curve, and let $K/\Q$ be a cubic extension. Then $E(K)_{tors}$ is isomorphic to one of the following $20$ groups:
      \begin{align*}
            &\Z/N_1\Z && \text{ with } N_1=1,\ldots,10,12,13,14,18,21, \\
            &\Z/2\Z\oplus \Z/2N_2\Z && \text{ with } N_2=1,2,3,4,7.
        \end{align*}
There exist infinitely many $\overline{\Q}$-isomorphism classes for each such torsion subgroup except for $\Z/21\Z$, for which there is only one elliptic curve with this torsion, which is defined over $\Q(\zeta_9)^+$.
\end{theorem}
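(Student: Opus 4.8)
The plan is to split the statement into an \emph{upper bound} --- every torsion subgroup of $E(K)$ with $E/\Q$ a rational elliptic curve and $[K:\Q]=3$ lies in the listed set of $20$ groups --- and a \emph{realizability} part, that each of the $20$ groups occurs, with infinitely many $\overline\Q$-isomorphism classes except for $\Z/21\Z$. Throughout I would pass freely between torsion points and images of Galois: a point $P\in E(\overline\Q)$ of order $N$ with $[\Q(P):\Q]\le 3$ is the same datum as a point of degree $\le 3$ on $X_1(N)$ lying over $j(E)\in X(1)$, equivalently the mod-$N$ image $\rho_{E,N}(G_\Q)\le\GL_2(\Z/N\Z)$ has a subgroup of index dividing $3$ fixing a vector of order $N$.

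For the upper bound I would argue in three steps. \textbf{(i)} The Weil pairing forces the first invariant: if $\Z/M\Z\oplus\Z/N\Z\hookrightarrow E(K)$ with $M\mid N$, then $E[M]\subseteq E(K)$, so $\Q(\zeta_M)\subseteq K$ and $\varphi(M)\mid 3$; since $\varphi$ never equals $3$, $M\in\{1,2\}$, and $E(K)_{\mathrm{tors}}$ is cyclic or of the form $\Z/2\Z\oplus\Z/2N'\Z$. \textbf{(ii)} For each prime $\ell$, bound the largest power of $\ell$ occurring as the order of a torsion point of a rational elliptic curve over a cubic field. The main device is an orbit-closure argument: the $G_\Q$-orbit of a cyclic subgroup $\langle P\rangle$ defined over a cubic field has size dividing $3$, so the subgroup it generates is defined over a field of degree dividing $3!=6$ and is either a $\Q$-rational cyclic subgroup (bounding $N$ by Mazur's isogeny theorem) or contains $E[\ell]$, hence $\mu_\ell$, forcing $\varphi(\ell)\mid 6$. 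Together with the classification of mod-$\ell$ and mod-$\ell^k$ images of rational elliptic curves (Mazur; Sutherland--Zywina; Rouse--Sutherland--Zureick-Brown) this rules out $\ell\ge 17$ and $\ell^k\in\{11,16,25,27,49,169\}$; for instance $11$-torsion over a cubic field would require either a $\Q$-rational cyclic $11$-subgroup --- whose generator lies in a field of degree $2$, $5$ or $10$ by Theorem~\ref{mazur} and the cyclotomic character, hence never degree $3$ --- or $E[11]$ in a sextic field, impossible as $\varphi(11)=10\nmid 6$. What remains is $\ell$-power order at most $8,9,5,7,13$ for $\ell=2,3,5,7,13$. \textbf{(iii)} Decide which of these prime powers occur simultaneously over one cubic field by forming the relevant fibre products of modular curves over $X(1)$ and determining their points of degree $\le 3$ above a rational $j$-invariant; the nontrivial exclusions are $\Z/15\Z$, $\Z/20\Z$, $\Z/26\Z$ (so $13$-torsion never combines with $2$-torsion over a cubic field), $\Z/2\Z\oplus\Z/10\Z$, $\Z/2\Z\oplus\Z/12\Z$ and $\Z/2\Z\oplus\Z/18\Z$. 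Steps (i)--(iii) leave exactly the $20$ groups.

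For realizability, the $15$ groups on Mazur's list are immediate: by Theorem~\ref{mazur} there are infinitely many $E/\Q$ with each such $E(\Q)_{\mathrm{tors}}$, and a generic cubic field introduces no new torsion. For $\Z/13\Z,\ \Z/14\Z,\ \Z/18\Z$ and $\Z/2\Z\oplus\Z/14\Z$ I would exhibit explicit one-parameter families, using that $X_0(13)$, $X_0(7)$ and $X_1(9)$ have genus $0$: for $\Z/13\Z$, start from a rational curve with a $\Q$-rational $13$-isogeny and, after a suitable quadratic twist (possible for infinitely many members of the family), arrange the isogeny character to have order $3$, so a generator of the kernel lies in a cyclic cubic field; for $\Z/14\Z$, take a rational curve with $\Q$-rational $7$-torsion whose $2$-division field has degree $6$, base-changed to a cubic subfield $\Q[x]/f$; for $\Z/2\Z\oplus\Z/14\Z$, take the subfamily of the $7$-torsion curves whose $2$-division field is cyclic cubic, base-changed to it; for $\Z/18\Z$, take a rational curve with $\Q$-rational $9$-torsion whose $2$-division polynomial is irreducible with non-square discriminant, base-changed to the resulting non-Galois cubic field. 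In each case the base-changed torsion subgroup contains the claimed group, hence equals it by the upper bound, and the families are genuinely one-dimensional, giving infinitely many $\overline\Q$-isomorphism classes.

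The case $\Z/21\Z$ is the main obstacle and the only sporadic phenomenon. Since $21$-torsion over a cubic field is exactly a cubic point of $X_1(21)$ --- a curve of genus $5$ --- lying over the rational number $j(E)$, the task is to determine all such points. I would first show $X_1(21)$ has only finitely many cubic points, from the facts that its $\overline\Q$-gonality exceeds $3$ and that the isogeny decomposition of $J_1(21)$ has no factor of positive $\Q$-rank (so $W_3(X_1(21))$ contains no translate of a positive-rank abelian subvariety over $\Q$); then enumerate these finitely many points, e.g.\ via a Mordell--Weil sieve on $J_1(21)$; and finally check that exactly one of them has rational $j$-invariant, corresponding to a single elliptic curve, which one exhibits explicitly and verifies to be defined over $\Q(\zeta_9)^+$ with torsion exactly $\Z/21\Z$. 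Making the finiteness-and-enumeration step rigorous, and isolating the rational-$j$ point among the few cubic points --- several of which are cuspidal or CM --- is where essentially all of the difficulty lies.
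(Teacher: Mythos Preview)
The paper does not prove this theorem at all: it is quoted verbatim as \cite[Theorem 1.]{basechanges} (Najman's classification of torsion of rational elliptic curves over cubic fields) and used as a black box. There is therefore no ``paper's own proof'' to compare your proposal against; the present paper takes this result as known input and moves on to study $\Q$-curves.

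That said, your outline is a reasonable sketch of how the cited result is actually obtained in \cite{basechanges}. The three-step upper bound (Weil pairing to force $M\in\{1,2\}$; bounding prime-power orders via isogeny/image arguments and Mazur's isogeny theorem; then ruling out the remaining composite combinations by looking at low-degree points on the relevant modular curves) matches the structure there, and the realizability via one-parameter families is also in the same spirit. The $\Z/21\Z$ case is indeed the sporadic point and is handled in \cite{basechanges} essentially as you describe, by determining the cubic points on $X_1(21)$. A few of your specific sub-arguments are rougher than what a complete proof needs --- for instance, ruling out $\Z/11\Z$ over cubic fields for \emph{rational} $E$ is more delicate than your orbit sketch suggests (one really uses the explicit mod-$11$ images of the finitely many $j$-invariants with a rational $11$-isogeny), and several of the composite exclusions in step (iii) require genuine computation on genus-$\ge 2$ modular curves rather than a one-line fibre-product remark --- but the overall architecture is correct.
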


\begin{theorem}\cite[Theorem 1]{alvaro-quintic}\label{basechanges5}
    Let $E/\Q$ be a rational elliptic curve, and let $K/\Q$ be a quintic extension. Then $E(K)_{tors}$ is isomorphic to one of the following $17$ groups:
      \begin{align*}
            &\Z/N_1\Z && \text{ with } N_1=1,\ldots,10,11,12,25 \\
            &\Z/2\Z\oplus \Z/2N_2\Z && \text{ with } N_2=1,2,3,4.
        \end{align*}
    There exist infinitely many $\overline{\Q}$-isomorphism classes for each such torsion subgroup except for $\Z/11\Z$, for which there are three elliptic curves with this torsion over $\Q(\zeta_{11})^+$.
\end{theorem}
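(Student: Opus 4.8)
\medskip

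\noindent\emph{Proof proposal.} The plan is to reduce the classification to the study of low-degree points on finitely many modular curves, and then to dispatch the delicate boundary values of $N$ one at a time. The key elementary observation is that if $P\in E(K)$ has exact order $N$ with $[K:\Q]=5$, then $\Q(P)\subseteq K$ has degree $1$ or $5$ over $\Q$ because $5$ is prime: in degree $1$, Mazur's Theorem~\ref{mazur} applies, while in degree $5$ the cyclic subgroup $C=\langle P\rangle$, whose field of definition is a subfield of $\Q(P)$, is either $\Q$-rational or generates a degree-$5$ field itself. If $C$ is $\Q$-rational, then $E$ admits a rational cyclic $N$-isogeny, so by the theorem of Mazur and Kenku $N$ lies in a small explicit set, and the Galois action on the generators of $C$ is given by a character $\chi\colon G_\Q\to(\Z/N)^{\times}$ whose image has order dividing $5$; running through the finitely many possibilities, the only value of $N$ not already permitted by Theorem~\ref{mazur} that survives this branch is $N=25$, with $\chi$ of order $5$. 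If $C$ is not $\Q$-rational, then $(E,P)$ defines a genuine degree-$5$ point of $X_1(N)$ lying over a rational $j$-invariant. The groups $\Z/2\Z\oplus\Z/2N_2\Z$ are treated in parallel after observing that they force $E[2]\subseteq E(\Q)$, reducing to points of degree $\le 5$ on $X_1(2,2N_2)$.

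With this reduction in place, the proof has three ingredients. First, a uniform bound on $N$: combining the (linear in the index) growth of the $\Q$-gonality of $X_1(N)$ and its exact values for small $N$ with the principle that a curve with infinitely many points of degree $\le d$ has $\Q$-gonality at most $2d$, one reduces to a finite explicit list of $N$. Second, for each surviving $N$ one decides whether the relevant degree-$5$ points with rational $j$-invariant are Zariski dense or finite: the genus-zero curves $X_1(N)$ give the familiar infinite families by direct parametrization, and — this is the mechanism behind $\Z/25\Z$ — the $\Q$-gonality pencil on $X_1(25)$ is the quotient map $X_1(25)\to X_1(25)/\langle\delta\rangle\cong\PP^1$ by an order-$5$ diamond operator $\delta$, whose fibres are single $\langle\delta\rangle$-orbits and therefore lie over a single rational $j$-invariant, yielding infinitely many $E/\Q$ with a point of order $25$ over a quintic field. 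Third, the sporadic case $N=11$: the relevant points form a finite set, corresponding to elliptic curves with a rational $11$-isogeny whose isogeny character has order $5$; an explicit computation (using models of $X_1(11)$ and $X_0(11)$, the latter having Mordell--Weil group $\Z/5\Z$) pins them down, and a conductor argument — any degree-$5$ abelian extension of $\Q$ unramified outside $11$ equals $\Q(\zeta_{11})^+$ — identifies the field of definition, producing the three curves named in the statement. The finitely many CM $j$-invariants are checked separately against the known torsion-growth behaviour of CM curves.

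The main obstacle is the second and third ingredient for the values of $N$ that do \emph{not} occur: ruling out $\Z/N\Z$ for $N=13,14,15,16,18,21$ (and the remaining larger $N$), and $\Z/2\Z\oplus\Z/2N_2\Z$ for $N_2\ge 5$, amounts to showing that $X_1(N)$, or the appropriate diamond or Atkin--Lehner quotient, has no non-cuspidal, non-CM degree-$5$ point with rational image on the $j$-line. Several of these curves have small gonality and a plethora of low-degree points, so this is not handed to us by the gonality bounds; it requires determining the rational points and Mordell--Weil groups of the curves involved — including Cartan-type modular curves, since one must also exclude exceptional mod-$\ell$ images that would place a cyclic subgroup over a degree-$5$ field — together with formal-immersion arguments in the spirit of Kamienny and explicit sieving of the residual possibilities. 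Establishing that the list of degree-$5$ points for $N=11$ is provably complete is the other point at which genuine computation, rather than an appeal to a general bound, is unavoidable.
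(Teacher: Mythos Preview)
This theorem is not proved in the paper at all: it is quoted verbatim from \cite{alvaro-quintic} as a known background result and used as input to the paper's own theorems (notably \Cref{mainresult5}). There is therefore no proof in the present paper to compare your proposal against.

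For what it is worth, your outline is broadly in the spirit of how results of this type (and, as far as I recall, Gonz\'alez-Jim\'enez's proof in \cite{alvaro-quintic}) are obtained: reduce via the primality of $5$ to the dichotomy ``$\langle P\rangle$ is $\Q$-rational'' versus ``$(E,P)$ gives a genuine degree-$5$ point on $X_1(N)$ over a rational $j$'', use the Mazur--Kenku isogeny classification together with the order-dividing-$5$ constraint on the isogeny character in the first branch, bound $N$ in the second branch via gonality, and then finish the finitely many surviving $N$ curve by curve. One small internal inconsistency: you assert that ``the only value of $N$ not already permitted by Theorem~\ref{mazur} that survives this branch is $N=25$'', yet two paragraphs later you (correctly) produce the three sporadic $N=11$ examples precisely from this same rational-isogeny branch, using that $(\Z/11\Z)^\times$ has an order-$5$ subgroup. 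Presumably you meant that $N=25$ is the only new value surviving with infinitely many examples; as written it contradicts your own treatment of $N=11$.
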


\begin{theorem}\cite[follows from Proposition 7.7]{basechangesseptic}\label{basechanges7}
   Let $E/\Q$ be a rational elliptic curve, and let $K/\Q$ be a septic extension. Then $E(K)_{tors}$ is isomorphic to one of the $15$ groups from \Cref{mazur}.
\end{theorem}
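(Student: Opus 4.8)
The plan is to show that base change to a septic field cannot push the torsion subgroup of an elliptic curve $E/\Q$ outside Mazur's list; combined with the fact that every group in \Cref{mazur} already occurs for some $E/\Q$ — and hence for $E_K$ as soon as $K$ is chosen linearly disjoint from $\Q(E[12])$ — this yields both the list and the infinitude of $\overline\Q$-isomorphism classes (the latter because \Cref{mazur} supplies infinitely many such $E$ over $\Q$).

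So let $P\in E(K)_{\mathrm{tors}}$ with $[K:\Q]=7$. Since $E$ is defined over $\Q$, the $\Gal(\overline\Q/\Q)$-orbit of $P$ has size $[\Q(P):\Q]$, which divides $7$; thus it is $1$ or $7$, and if every torsion point of $E(K)$ is rational we are done by \Cref{mazur}. Otherwise, decomposing a non-rational torsion point into its prime-power parts (each an integer multiple of it, hence still $K$-rational), we obtain a point $P$ of exact order $\ell^n$ with $[\Q(P):\Q]=7$. The first task is to bound $\ell^n$. The stabilizer of $P$ has index $7$ in the mod-$\ell^n$ image $G$ of $E/\Q$, and the line spanned by $\ell^{n-1}P$ is either $G$-stable — so $E$ admits a rational $\ell$-isogeny, whence $\ell\in\{2,3,5,7,11,13,17,19,37,43,67,163\}$ by Mazur's isogeny theorem — or lies in a $G$-orbit of size $7$ among the $\ell+1$ subgroups of order $\ell$. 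In either case $7$ divides the order of the pertinent subgroup of $\GL_2(\Z/\ell^n\Z)$; exploiting that $\det G$ is the full mod-$\ell^n$ cyclotomic character, one runs through the possible images in Borel and Cartan subgroups and their normalizers and finds that, apart from the finitely many CM curves over $\Q$ (whose torsion over small-degree fields is classified separately) and the finitely many cases coming from $\ell\in\{13,43\}$ and $\ell^n=49$ (all handled by direct computation), the only prime power that can occur is $\ell^n=7$.

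For $\ell=7$ the classification of mod-$7$ images of elliptic curves over $\Q$ shows that an index-$7$ point-stabilizer occurs only for the images conjugate to $\bigl\{\begin{psmallmatrix}a&b\\0&1\end{psmallmatrix}:a\in\F_7^\times,\ b\in\F_7\bigr\}$; equivalently, $E$ must be $7$-isogenous to an elliptic curve with a rational point of order $7$. Such $E$ form a rational curve (isomorphic to $X_1(7)$), they have $E(\Q)[7]=0$, and they genuinely do acquire a new cyclic factor $\Z/7\Z$ over a septic field. For the statement to hold, this new factor must not combine with $E(\Q)_{\mathrm{tors}}$ to leave \Cref{mazur}, i.e.\ one must have $E(\Q)_{\mathrm{tors}}\in\{\{0\},\Z/2\Z\}$; any rational torsion of order $\ge 3$, or full rational $2$-torsion, would give a group $\Z/7N\Z$ or $\Z/2\Z\oplus\Z/14N\Z$ with $N\ge2$ (such as $\Z/21\Z$ or $\Z/2\Z\oplus\Z/14\Z$), none of which lies in \Cref{mazur}. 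But each such combination forces $E$ onto a modular curve — a fibre product over $X(1)$ of $X_1(7)$ with one of $X_1(3),X_1(4),X(2),X_1(5),\dots$ — of genus at least $2$, and one checks, using Faltings' theorem and an explicit determination of the rational points, that every rational point on it is a cusp. The same kind of argument disposes of the cases $\ell\in\{13,43\}$ and $\ell^n=49$; for instance, a rational $43$-isogeny places $E$ among the finitely many explicitly known rational points of the genus-$3$ curve $X_0(43)$, and none of them gives a $43$-torsion point over a septic field. Hence $E(K)_{\mathrm{tors}}$ is always one of the $15$ groups of \Cref{mazur}.

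The crux is this last step: bounding $\ell^n$ and, above all, determining the rational points of the handful of genus-$\ge 2$ modular curves (and of $X_0(43)$, and the torsion of the CM curves) whose non-cuspidal rational points would create a forbidden group. This is precisely the work carried out in \cite{basechangesseptic}, whose Proposition~7.7 we invoke.
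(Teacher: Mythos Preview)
The paper does not give a proof of this statement at all: \Cref{basechanges7} is quoted as a known result from the literature, with the citation ``follows from Proposition~7.7'' of \cite{basechangesseptic} serving as the entire justification. Your proposal likewise ends by invoking exactly that proposition, so at the level of what is actually being \emph{used}, you and the paper agree.

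Where you differ is that you preface the citation with a heuristic sketch of how a proof of Proposition~7.7 of \cite{basechangesseptic} presumably goes. This sketch is not needed, and in its current form it is also not a proof: the claim that ``the only prime power that can occur is $\ell^n=7$'' (outside the listed exceptional cases) is asserted rather than shown; the analysis quietly relies on the fact that $7\nmid\lvert\GL_2(\Z/\ell^n\Z)\rvert$ for $\ell\in\{2,3,5\}$, which you never state; the case where the line $\langle\ell^{n-1}P\rangle$ has $G$-orbit of size $7$ does \emph{not} force a rational $\ell$-isogeny, so primes outside Mazur's isogeny list (e.g.\ $\ell=29,41$) are not excluded by your dichotomy and need a separate argument via possible mod-$\ell$ images; and the discussion of infinitude of $\overline\Q$-isomorphism classes is extraneous to the stated theorem. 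None of this is wrong in spirit, but if you want to keep the sketch you should flag it explicitly as motivation and not as a proof, and tighten the case analysis. Otherwise, simply citing \cite{basechangesseptic} as the paper does is both correct and sufficient.
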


    





\begin{remark}
    Gužvić proved in \cite[Theorem 5.1.2]{guzvic2021torsion} that elliptic curves with rational $j$-invariant over an odd prime degree number field have the same list of possible torsion subgroups as base changes of rational elliptic curves. 
    
    Over quadratic number fields, there is one exception. Namely, the group $\Z/13\Z$ appears among torsion subgroups of elliptic curves with rational $j$-invariant, and does not appear among torsion subgroups of rational elliptic curves. For example, consider the elliptic curve \hyperlink{https://www.lmfdb.org/EllipticCurve/2.2.17.1/100.1/e/2}{100.1-e2} from the LMFDB.
\end{remark}

The main results of this article are the following.

\begin{theorem}\label{mainresult}
        Let $K/\Q$ be a cubic extension and let $E/K$ be a $\Q$-curve. Then $E(K)_{tors}$ is isomorphic to one of the  $20$ groups appearing in \Cref{basechanges}.
\end{theorem}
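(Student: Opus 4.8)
\medskip

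The plan is to reduce everything to elliptic curves with rational $j$-invariant, for which \Cref{basechanges} together with the remark above (that such curves over odd prime degree fields have the same torsion list as base changes of rational curves) already settles the question, and then to control what the connecting isogeny does to torsion. First I would dispose of the CM case: if $E/K$ has complex multiplication then $\Q(j(E))\subseteq K$ forces the CM order to have class number $1$ or $3$; the class number one case is contained in \Cref{basechanges} and the remark, and for the finitely many class number three discriminants the known upper bounds on torsion of CM elliptic curves over cubic (indeed odd degree) fields, due to Bourdon and Pollack, show that only groups already on the list of $20$ occur. So assume $E$ is non-CM. By the Cremona--Najman proposition and the remark following it there is a cyclic $K$-isogeny $\varphi\colon E\to E'$ with $j(E')\in\Q$; put $n=\deg\varphi$, and note $E'$ is then non-CM as well, so the possible images of $\bar\rho_{E',m}$ are governed by the known classification of mod-$m$ Galois images of elliptic curves with rational $j$-invariant.

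Next I would argue one prime at a time, using $E(K)_{tors}=\bigoplus_\ell E(K)[\ell^\infty]$. For $\ell\nmid n$ the kernel of $\varphi$ meets $E[\ell^\infty]$ trivially, so $\varphi$ restricts to a $G_K$-equivariant isomorphism $E[\ell^\infty]\xrightarrow{\ \sim\ }E'[\ell^\infty]$; hence $E(K)[\ell^\infty]\cong E'(K)[\ell^\infty]$, a factor we already understand. For $\ell\mid n$, factor $\varphi$ through its $\ell$-primary part $\varphi_1\colon E\to E_1$, a cyclic $\ell^{a}$-isogeny with $a=v_\ell(n)$; the complementary factor $E_1\to E'$ has degree prime to $\ell$, so it induces an isomorphism $E_1[\ell^\infty]\cong E'[\ell^\infty]$ of $G_K$-modules, and the $\ell$-primary torsion of $E$ is thereby linked to that of the known curve $E'$ through a single cyclic $\ell^{a}$-isogeny. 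The task is then: starting from the possible $G_K$-actions on $E'[\ell^\infty]$ (obtained by restricting the known image over $\Q$ to the subgroup $G_K$ of index $3$ and twisting by a quadratic character, since $E'$ is non-CM with $j(E')\in\Q$) together with a $G_K$-stable cyclic subgroup of order $\ell^{a}$, describe the $G_K$-module $E[\ell^\infty]$ on the other side of the isogeny and read off its $G_K$-fixed points. This is precisely the analysis of how images of Galois representations change under isogeny that is announced in the abstract, and it becomes a finite computation once $\ell$ and $a$ are bounded.

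Bounding $\ell$ and $a$ is where I expect the real work. A curve with rational $j$-invariant admitting a $K$-rational cyclic $\ell^{a}$-isogeny over a cubic field corresponds to a point of degree at most $3$ on $X_0(\ell^{a})$ lying above a rational point of the $j$-line, so the classifications of rational points (Mazur, Kenku) and of low-degree points on the modular curves $X_0(N)$ leave only finitely many possibilities: the prime isogeny degrees are small and the $2$- and $3$-power degrees are bounded. For each surviving configuration one computes the neighbouring module, restricts to $G_K$, checks that the resulting $E(K)[\ell^\infty]$ is one of the cyclic or bicyclic groups permitted, and finally checks that no combination over the distinct primes dividing $n$ escapes the $20$ groups of \Cref{basechanges}. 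I expect $\ell=2$ and $\ell=3$ to be the delicate cases, since there both the range of admissible isogeny degrees and the number of possible mod-$\ell^{k}$ images of $E'$ are largest, and one must rule out, for instance, the a priori possibility that the isogeny creates full $2$-power torsion or a large cyclic $2$- or $3$-power group. Sharpness requires nothing new: elliptic curves with rational $j$-invariant over a cubic field are themselves $\Q$-curves, so by \Cref{basechanges} all $20$ groups occur, with the stated finiteness for $\Z/21\Z$.
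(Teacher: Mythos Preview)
Your outline is a sound high-level strategy, and the reduction ``$E(K)[\ell^\infty]\cong E'(K)[\ell^\infty]$ for $\ell\nmid n$, then analyse the $\ell$-primary isogeny for $\ell\mid n$'' is correct and is essentially what the paper's \Cref{izogenije-generalno} and \Cref{slozeni} make precise. But as written it is a plan, not a proof: every hard step is left as ``one computes'', ``I expect'', or ``the task is then''. In particular you never carry out the case analysis over the primes dividing $n$, you never rule out any specific group, and you never verify the cross-prime compatibility (``no combination \dots\ escapes the $20$ groups'') that you yourself flag as necessary.

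The key structural difference from the paper is that you do not invoke the full classification of torsion of \emph{arbitrary} elliptic curves over cubic fields (Derickx--Etropolski--van Hoeij--Morrow--Zureick-Brown, the paper's \Cref{cubictorsion}). That theorem says $E(K)_{tors}$ is already one of $26$ groups, so the entire problem collapses to eliminating exactly six groups: $\Z/11\Z$, $\Z/15\Z$, $\Z/16\Z$, $\Z/20\Z$, $\Z/2\Z\times\Z/10\Z$, $\Z/2\Z\times\Z/12\Z$. The paper then disposes of each with a targeted argument: push the isogeny to a rational twist $E''/\Q$, use \Cref{nadneparnimnistanovo} to descend the relevant isogeny to $\Q$, and then appeal either to the finitely many $j$-invariants with that isogeny (checking eigenvalues of the mod-$\ell$ image fail \Cref{izogenija i eigen+-1}), to the Rouse--Zureick-Brown $2$-adic database, or to a short subgroup search in $\GL_2(\Z/20\Z)$ or $\GL_2(\Z/24\Z)$ constrained by the shape given in \Cref{izogenije-generalno}. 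Without \Cref{cubictorsion} as input, your ``finite computation'' would have to reproduce from scratch both the admissible $\ell$-primary pieces and all their combinations across primes; this is a substantially larger and completely unspecified task compared with the six tailored eliminations the paper actually performs. (For the CM case the paper cites Clark--Corn--Rice--Stankewicz rather than Bourdon--Pollack, but this is cosmetic.)
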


\begin{theorem}\label{mainresult5}
    Let $K/\Q$ be a quintic extension and let $E/K$ be a $\Q$-curve. Then $E(K)_{tors}$ is isomorphic to one of the $17$ groups appearing in \Cref{basechanges5}.
\end{theorem}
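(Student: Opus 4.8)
The plan is to reduce \Cref{mainresult5} to the classification of torsion for elliptic curves with rational $j$-invariant over quintic fields (which, by the remark following \Cref{basechanges7}, is the list of $17$ groups of \Cref{basechanges5}), the bridge being the Cremona--Najman isogeny \cite{cremona-najman}. A curve with rational $j$-invariant is itself a $\Q$-curve, so all $17$ groups already occur; the content of \Cref{mainresult5} is the reverse inclusion. For that one takes a $\Q$-curve $E/K$, produces a cyclic $K$-isogeny $\varphi\colon E\to E'$ with $j(E')\in\Q$, and controls how the $K$-rational torsion can change along $\varphi$ by tracking the change in the mod-$\ell^{\infty}$ Galois images.

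First, the CM case is handled separately: every CM elliptic curve over any number field is a $\Q$-curve (Galois conjugates have CM by the same order, hence are geometrically isogenous), so one invokes the known classification of torsion of CM elliptic curves over number fields of degree $5$ and checks it lies inside the list; this also disposes of the curves with $j\in\{0,1728\}$. For non-CM $E/K$, \cite{cremona-najman} together with the remark after it gives a cyclic $K$-isogeny $\varphi\colon E\to E'$ with $j(E')\notin\{0,1728\}$ rational, of some degree $d$. One then bounds $d$: the dual isogeny exhibits a $K$-rational point of $X_0(d)$ lying over a rational point of the $j$-line, and the known description of points of degree $5$ on modular curves $X_0(d)$ (where oddness of $[K:\Q]$ is used, exactly as in the quadratic and large prime degree cases already in the literature) leaves only a finite explicit set of $d$.

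Next, fix a prime $\ell$ and analyze $E(K)[\ell^{\infty}]$; write $d=\ell^{a}m$ with $\ell\nmid m$. If $\ell\nmid d$, then $\varphi$ restricts to a $G_K$-equivariant isomorphism on $\ell$-power torsion, so $E(K)[\ell^{\infty}]\cong E'(K)[\ell^{\infty}]$, which is bounded by the rational-$j$-invariant classification. If $\ell\mid d$, let $C_m\subseteq\ker\varphi$ be the cyclic subgroup of order $m$ and put $E''=E/C_m$; then $E\to E''$ is a prime-to-$\ell$ isogeny, so $E(K)[\ell^{\infty}]\cong E''(K)[\ell^{\infty}]$, while $E''$ is cyclically $\ell^{a}$-isogenous to $E'$. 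Now $\rho_{E'',\ell^{\infty}}(G_K)$ and $\rho_{E',\ell^{\infty}}(G_K)$ are conjugate inside $\GL_2(\Q_\ell)$ by $\operatorname{diag}(\ell^{a},1)$ in a suitable basis, and from this conjugation one reads off precisely how the group of $G_K$-fixed $\ell$-power torsion points can grow. The crucial input is that $\rho_{E',\ell^{\infty}}(G_K)$ is severely constrained: up to adjoining $-I$ (the quadratic twist relating $E'$ to a curve over $\Q$, which cannot enlarge the fixed spaces below) and restricting to a subgroup of index dividing $10$, it is the mod-$\ell^{\infty}$ image of an elliptic curve over $\Q$, hence one of the groups on the known finite list. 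Intersecting that list with the conjugation recipe — carried out for each admissible image of $E'$ and each surviving $d$ — shows that the only primes $\ell$ that can occur, and the only $\ell$-power torsion that can appear, are already those of \Cref{basechanges5}. (In fact this argument realizes $E(K)_{tors}$ as the torsion of a rational-$j$-invariant curve over a quintic field, which is the stronger statement of the corollary in the abstract.)

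The main obstacle is this last step for the ``dangerous'' primes and prime powers: one must rule out a $K$-rational point of order $13$ — which does occur over quadratic fields for rational-$j$-invariant curves, and which the isogeny $\varphi$ could a priori create even though $13\nmid$ anything in \Cref{basechanges5} — as well as $\ell$-power torsion larger than in \Cref{basechanges5} and forbidden combinations such as $\Z/2\Z\oplus\Z/2N\Z$ with $N\ge 5$ or $\Z/9\Z$ together with $\Z/8\Z$. For each surviving $d$ this amounts to showing that the modular curve parametrizing a $\Q$-curve carrying the extra torsion together with its Cremona--Najman isogeny (a fiber product of an $X_1$ with an $X_0$ over the $j$-line) has no points of degree $5$ beyond cusps and CM points; equivalently, that the image conjugated by $\operatorname{diag}(\ell^{a},1)$ never has too large a fixed subspace for any admissible Galois image of the rational curve $E'$. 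Organizing this case analysis by $\ell$ and by $d$ is where the bulk of the work lies.
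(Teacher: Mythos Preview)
Your outline is the paper's strategy: pass along the Cremona--Najman isogeny to $E'$ with $j(E')\in\Q$, twist to a rational curve $E''$, and constrain $\rho_{E'',N}(G_K)$ via \Cref{slozeni} (your ``conjugation by $\operatorname{diag}(\ell^a,1)$'') together with the known images of rational curves. The paper carries this out in \Cref{case57} as a sequence of short lemmas, one per forbidden prime power or product. Two ingredients the paper uses are absent from your sketch and would have to be discovered before the case analysis goes through.

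First, for an odd prime $\ell\neq 5$ the paper shows (\Cref{twist-dualno}) that the intermediate curve in your factorisation $E\to E''\to E'$ (prime-to-$\ell$ then $\ell$-power) already has rational $j$-invariant, by descending the $\ell$-power isogeny $E''\to E'$ to $\Q$ via results of \cite{ja}. This transfers any odd prime order point on $E$ directly to a rational-$j$ curve over $K$, so your worry about $13$ dissolves in one line. Your alternative route (a finite check over the rational $j$-invariants admitting a $13$-isogeny, after conjugating the image) would also work but is longer; more importantly, without \Cref{twist-dualno} your parenthetical claim that the argument ``realises $E(K)_{\mathrm{tors}}$ as the torsion of a rational-$j$ curve'' is unjustified, since your $E''$ has no reason to have rational $j$-invariant. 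The paper obtains \Cref{korolarglavnog} only a posteriori, by comparing lists.

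Second, for $\ell=5$ the shortcut of \Cref{twist-dualno} is unavailable (here $\ell=[K:\Q]$), and the generic conjugation recipe alone does not exclude a point of order $125$ or $50$. The paper brings in Greenberg's theorem on Sylow pro-$p$ subgroups of $5$-adic images (\Cref{divisibleby25}) to force a contradiction with the index bound coming from \Cref{slozeni}; this is the one genuinely new input beyond your outline. Minor slip: the index of $\rho_{E'',\ell^\infty}(G_K)$ in $\rho_{E'',\ell^\infty}(G_\Q)$ divides $5$, not $10$; the $\pm I$ from twisting is a separate adjustment.
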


\begin{theorem}\label{mainresult7}
    Let $K/\Q$ be a septic extension and let $E/K$ be a $\Q$-curve. Then $E(K)_{tors}$ is isomorphic to one of the $15$ groups appearing in \Cref{basechanges7}.
\end{theorem}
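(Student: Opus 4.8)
\noindent\emph{Outline of the proof.} The plan is to combine the reduction of Cremona and Najman with the known torsion of elliptic curves of rational $j$-invariant over septic fields, reducing everything to the question of how the $\ell$-adic Galois image --- and hence the $\ell$-primary torsion --- of an elliptic curve changes along a cyclic $K$-isogeny, and then to settle that question by a finite case analysis built on the classification of $\ell$-adic images of rational elliptic curves. (This parallels the strategy for \Cref{mainresult,mainresult5}; the septic case stops exactly at Mazur's list because $7$ is prime and coprime to $2$ and $3$, and because by \Cref{basechanges7} curves of rational $j$-invariant themselves gain no new torsion over septic fields.) We first dispose of the CM case: a CM elliptic curve over any number field is a $\Q$-curve, hence is not covered by the Cremona--Najman reduction, but since a septic field contains no imaginary quadratic subfield, the torsion of such a curve is one of $0,\ \Z/2\Z,\ \Z/3\Z,\ \Z/4\Z,\ \Z/6\Z,\ \Z/2\Z\oplus\Z/2\Z$ --- the same list that occurs over $\Q$, no growth being possible over a field of degree coprime to the orders of the relevant Cartan-normalizer quotients --- and all of these are in \Cref{mazur}. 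From now on $E$ is non-CM.

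By the Cremona--Najman proposition and the following remark, combined with Gužvić's result quoted above when $j(E)\in\Q$, there is an elliptic curve $E'/K$ with $j(E')\in\Q$, a \emph{cyclic} $K$-isogeny $\varphi\colon E\to E'$ of some degree $n$ (with $n=1$ allowed, if $j(E)\in\Q$), and $E'(K)_{tors}$ one of the $15$ groups of \Cref{mazur}. It therefore suffices to bound the admissible $n$ and, for each, to control $E(K)_{tors}$ in terms of $E'(K)_{tors}$ and $\varphi$. If $j(E)\notin\Q$ then $\Q(j(E))=K$ (as $[K:\Q]=7$ is prime), and the pair $(E,\ker\varphi)$ is a closed point of $X_0(n)$ with residue field $K$ mapping to $j(E')\in\Q$ under one of the two natural maps to the $j$-line. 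Comparing this with the classification of mod-$\ell$ images of elliptic curves over $\Q$ --- which over $K$ can only shrink to a subgroup of index dividing $7$ --- and with the known description of points of small degree on the modular curves $X_0(n)$, one finds that $n$ lies in an explicit finite set, divisible only by small primes.

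Now fix such an $n$ and a prime $\ell\mid n$ (for $\ell\nmid n$ the isogeny induces a $\Gal(\overline K/K)$-equivariant isomorphism $E[\ell^\infty]\cong E'[\ell^\infty]$, so $E(K)[\ell^\infty]=E'(K)[\ell^\infty]$ is already controlled). Writing $\ell^k$ for the exact power of $\ell$ dividing $n$, the $\ell$-power part of $\varphi$ is a cyclic $\ell^k$-isogeny, which on $\ell$-adic Tate modules is, in a suitable $\Z_\ell$-basis, the diagonal matrix $\mathrm{diag}(1,\ell^k)$; hence $\rho_{E,\ell^\infty}(\Gal(\overline K/K))$ is the conjugate of $\rho_{E',\ell^\infty}(\Gal(\overline K/K))$ by such a diagonal matrix in $\GL_2(\Q_\ell)$. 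Since $j(E')\in\Q$ and $[K:\Q]$ is prime, $\rho_{E',\ell^\infty}(\Gal(\overline K/K))$ belongs to an explicit finite list, obtained from the known $\ell$-adic images of elliptic curves over $\Q$ by passing to subgroups of index dividing $7$ and twisting; running through this list, forming the diagonal conjugates that again lie in $\GL_2(\Z_\ell)$, and reading off the $\Gal(\overline K/K)$-fixed submodule of $E[\ell^\infty]$, one obtains all possible groups $E(K)[\ell^\infty]$. Assembling these over the primes dividing $n$ gives $E(K)_{tors}$.

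The main obstacle is the final verification: that this process never produces a group outside Mazur's list, i.e. that moving from $E'$ to $E$ along a cyclic isogeny never enlarges the $K$-rational torsion beyond \Cref{mazur}. Concretely one must run through the finitely many relevant primes $\ell$ (essentially $\ell\in\{2,3,5,7,11,13\}$ together with the few larger primes admitting a $\Q$-rational isogeny) and, for each, through the lattice of $\GL_2(\Q_\ell)$-conjugate subgroups of the images permitted for a rational elliptic curve over a septic field, checking that none of these conjugates fixes too large a submodule of $E[\ell^\infty]$. That the answer comes out to be exactly the $15$ groups of \Cref{mazur} is made plausible by \Cref{basechanges7} and the coprimality of $7$ with $2$ and $3$, but turning that heuristic into a proof is the technical core of the argument.
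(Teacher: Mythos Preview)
Your proposal is an honest outline, but it is not a proof: you explicitly say that ``turning that heuristic into a proof is the technical core of the argument'' and then stop. The framework is not wrong --- the Cremona--Najman reduction together with tracking Galois images along a cyclic isogeny is exactly the right setup, and your diagonal-conjugation description of how the image changes is essentially the paper's \Cref{slozeni} --- but the verification you promise is never carried out, and the step where you ``find that $n$ lies in an explicit finite set'' is left unjustified (comparing a degree-$7$ closed point of $X_0(n)$ over a rational $j$-value with mod-$\ell$ image data is not, by itself, a bound on $n$ without further input such as the classification of cyclic isogeny degrees over degree-$14$ fields).

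The paper's proof is organized differently and sidesteps both problems. Rather than first bounding $n$ and then enumerating $\ell$-adic images and their diagonal conjugates, it eliminates the forbidden torsion orders one at a time with short structural arguments. Two ingredients you do not anticipate do most of the work: (i) \Cref{twist-dualno}, which shows that for an odd prime $\ell\neq 7$ one may replace $E$ by a $K$-isogenous curve that \emph{already} has rational $j$-invariant while retaining the $K$-rational point of order $\ell$, so one never actually needs to push torsion across an $\ell$-power isogeny; and (ii) the repeated use of the elementary fact that $7\nmid |\GL_2(\Z/\ell\Z)|$ for $\ell\in\{2,3,5\}$, which forces $\rho_{E'',\ell^\infty}(G_K)=\rho_{E'',\ell^\infty}(G_\Q)$ for the rational twist $E''$ and reduces the remaining cases to known impossibilities over $\Q$ (no rational $14$-, $20$-, or $24$-isogeny, Greenberg's index bound for the $5$-adic image, and so on). With these two inputs the septic case collapses to a handful of short lemmas; your exhaustive sweep over $(n,\ \ell\text{-adic image})$ pairs would in principle reach the same conclusion, but is much heavier and, as written, is not actually performed.
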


\begin{remark}
    In \cite{lefourn}, Le Fourn and Najman found all possible torsion subgroups of $\Q$-curves over quadratic number fields. The new groups that do not occur over the rational numbers are \begin{align*}
        &\Z/N\Z, && \text{ for } N=13,14,15,16,18, \\
        &\Z/2\Z \oplus \Z/2N\Z && \text{ for } N=5,6, \\
        &\Z/3\Z\oplus \Z/3N\Z && \text{ for } N=1,2, \\
        &\Z/4\Z \oplus \Z/4\Z. &&
    \end{align*}  
        In \cite[Theorem 1.3.]{cremona-najman}, Cremona and Najman have shown that when $p>7$ is a prime and $K/\Q$ is an extension of degree $p$, every torsion subgroup $E(K)_{tors}$ of some $\Q$-curve $E/K$ is one of the subgroups already appearing in \Cref{mazur}.
\end{remark}
Combining the previous remark with the results of this paper, the classification of torsion of $\Q$-curves over number fields of prime degree is now completed.

\begin{corollary}\label{korolarglavnog}
    Let $p$ be an odd prime number. If a group $T$ appears as the torsion subgroup of some $\Q$-curve over a number field of degree $p$, then $T$ also appears as the torsion subgroup of some elliptic curve over a number field of degree $p$ with rational $j$-invariant.
\end{corollary}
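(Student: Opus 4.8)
The plan is to argue by cases on the prime $p$, reducing everything to classification results already in hand. For $p\in\{3,5,7\}$, \Cref{mainresult}, \Cref{mainresult5} and \Cref{mainresult7} say that a torsion subgroup $T$ of a $\Q$-curve over a degree-$p$ field lies in the list of \Cref{basechanges}, \Cref{basechanges5} or \Cref{basechanges7} respectively; for $p>7$, \cite[Theorem~1.3]{cremona-najman} forces $T$ to be one of the $15$ Mazur groups of \Cref{mazur}. In all cases $T$ therefore lies in a list every member of which is \emph{realized} as the torsion subgroup of a rational elliptic curve over a suitable field: for $p=3,5$ this realizability is part of the statements of \Cref{basechanges} and \Cref{basechanges5} (the exceptional groups $\Z/21\Z$ and $\Z/11\Z$ still occur, over the cubic field $\Q(\zeta_9)^+$ and the quintic field $\Q(\zeta_{11})^+$ respectively), and for $p\ge 7$ every member is a Mazur group, hence occurs as $E(\Q)_{tors}$ for some $E/\Q$ by \Cref{mazur}. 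Since an elliptic curve over $\Q$ has rational $j$-invariant, this already settles $p=3,5$, and for $p\ge 7$ it reduces the corollary to producing, for each Mazur group $T$, a degree-$p$ field over which a fixed rational curve realizing $T$ has no torsion growth.

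For that last point I would use a linear-disjointness argument. Fix $E/\Q$ with $E(\Q)_{tors}=T$. The torsion of $E$ over number fields of degree $p$ is bounded (by Merel's uniform boundedness theorem, or by Serre's open image theorem in the non-CM case), so there is an integer $M$ such that every torsion point of $E$ defined over any degree-$p$ number field has order dividing $M$; set $L:=\Q(E[M])$, a finite Galois extension of $\Q$. There are infinitely many number fields of degree $p$ (for example $\Q(\sqrt[p]{q})$ for rational primes $q$), only finitely many of which are contained in $L$, so we may choose a degree-$p$ field $K$ with $K\not\subseteq L$; since $[K:\Q]=p$ is prime, $K\cap L=\Q$. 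Any $P\in E(K)_{tors}$ has order dividing $M$, hence lies in $E(L)$, so $\Q(P)\subseteq K\cap L=\Q$ and $P\in E(\Q)_{tors}=T$; combined with the obvious inclusion $T\subseteq E(K)_{tors}$ this gives $E(K)_{tors}=T$, and $E$ has rational $j$-invariant because it is defined over $\Q$. Together with the first paragraph, this proves the corollary.

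I expect no serious obstacle here, since all the genuine content is carried by \Cref{mainresult}, \Cref{mainresult5}, \Cref{mainresult7} and by \cite{cremona-najman}. The only step requiring any care is the realizability half for $p\ge 7$ — arranging that a chosen rational curve realizing a Mazur group can be base changed to some degree-$p$ field without acquiring new torsion — which the linear-disjointness trick handles cleanly once one invokes the (standard) uniform boundedness of torsion in degree $p$; for $p=3,5$ even this is unnecessary, as realizability of every group in the target list, including the sporadic $\Z/21\Z$ and $\Z/11\Z$, is already recorded in \Cref{basechanges} and \Cref{basechanges5}.
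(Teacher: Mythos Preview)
Your argument is correct and follows the same case split the paper uses implicitly: the corollary is stated without proof in the paper, as an immediate consequence of \Cref{mainresult}, \Cref{mainresult5}, \Cref{mainresult7} together with the $p>7$ result of Cremona--Najman, plus the realizability statements built into \Cref{basechanges}, \Cref{basechanges5} and \Cref{mazur}. The one place where you are more careful than the paper is the realizability step for $p\ge 7$: the paper tacitly assumes that each Mazur group can be realized as the \emph{full} torsion of some rational elliptic curve over some degree-$p$ field, whereas you justify this explicitly via the linear-disjointness argument with $L=\Q(E[M])$, which is a clean way to guarantee no torsion growth.
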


The above corollary is false for $p=2$. However, a similar statement holds if we replace the class of $\Q$-curves by the class of elliptic curves which are isogenous to an elliptic curve with rational $j$-invariant.

\begin{theorem}\label{pomak-izo}
    Let $p$ be a prime number. If a group $T$ appears as the torsion subgroup of some elliptic curve over a number field of degree $p$ which is $\overline \Q$-isogenous to an elliptic curve with rational $j$-invariant, then $T$ also appears as the torsion subgroup of some elliptic curve over a number field of degree $p$ with rational $j$-invariant.
\end{theorem}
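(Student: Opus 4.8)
The plan is to split the argument according to whether $p$ is odd or equal to $2$, the quadratic case being where all the work is. Recall first that if $E/K$ is $\overline{\Q}$-isogenous to an elliptic curve $E'$ with $j(E')\in\Q$, then $E$ is itself a $\Q$-curve: this is exactly the observation made in the introduction — replace $E'$ by a model $E_0/\Q$ with $j(E_0)=j(E')$, note that the $\overline{\Q}$-isogeny $\psi\colon E\to E_0$ has a conjugate $\psi^\sigma\colon E^\sigma\to E_0^\sigma=E_0$ for every $\sigma\in\Gal(\overline{\Q}/\Q)$, and compose $\psi$ with the dual of $\psi^\sigma$ to obtain an isogeny $E\to E^\sigma$. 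Hence $T=E(K)_{tors}$ occurs as the torsion subgroup of a $\Q$-curve over a number field of degree $p$. If $p$ is an odd prime, \Cref{korolarglavnog} applies directly and identifies $T$ with the torsion subgroup of an elliptic curve with rational $j$-invariant over a number field of degree $p$, which is the claim. For the rest of the argument I therefore take $p=2$, and I may assume $j(E)\notin\Q$, since otherwise $E$ is already an elliptic curve with rational $j$-invariant over the quadratic field $K$; thus $K=\Q(j(E))$.

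Next I would replace the $\overline{\Q}$-isogeny by one defined over $K$. The CM case can be set aside as a finite verification: an elliptic curve with complex multiplication is $\overline{\Q}$-isogenous to a curve with rational $j$-invariant only when its CM field has class number one, and combined with $j(E)\notin\Q$ this leaves finitely many CM $j$-invariants — those of the imaginary quadratic orders of class number two — to treat by hand. So assume $E$ is non-CM. Then $\mathrm{Hom}_{\overline{\Q}}(E,E')$ is free of rank one; if $\mu_0$ generates it, then $\mu_0$ and $-\mu_0$ have the same kernel, which is therefore stable under $\Gal(\overline{\Q}/K)$, so $\mu_0\colon E\to E'':=E/\ker\mu_0$ is a cyclic isogeny defined over $K$ with $j(E'')=j(E')\in\Q$. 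In particular $E''$ is an elliptic curve with rational $j$-invariant over the quadratic field $K$, lying in the same $K$-isogeny class as $E$.

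By \cite{lefourn}, $T$ is one of the groups on Le Fourn and Najman's list. Comparing that list with the classification of torsion subgroups of elliptic curves with rational $j$-invariant over quadratic fields — namely the list of torsion subgroups of base changes of rational elliptic curves to quadratic fields together with $\Z/13\Z$, as recalled in the remark above — one sees that every group on Le Fourn and Najman's list is realized by an elliptic curve with rational $j$-invariant over some quadratic field, with the sole exceptions of $\Z/14\Z$, $\Z/18\Z$ and $\Z/2\Z\oplus\Z/10\Z$. (Concretely: the fifteen groups of \Cref{mazur} are realized by base changes of rational curves to suitably chosen quadratic fields; the groups $\Z/15\Z$, $\Z/16\Z$, $\Z/2\Z\oplus\Z/12\Z$, $\Z/3\Z\oplus\Z/3\Z$, $\Z/3\Z\oplus\Z/6\Z$, $\Z/4\Z\oplus\Z/4\Z$ occur among base changes as well; and $\Z/13\Z$ occurs for a twist of a rational elliptic curve admitting a rational $13$-isogeny.) It therefore suffices to prove that $\Z/14\Z$, $\Z/18\Z$ and $\Z/2\Z\oplus\Z/10\Z$ do not arise as $E(K)_{tors}$ for $E/K$ over a quadratic field which is $\overline{\Q}$-isogenous to an elliptic curve with rational $j$-invariant.

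For this last step I would use the $K$-isogeny $\mu_0\colon E\to E''$ of the second paragraph together with the description, developed in this paper, of how the image of the mod-$N$ Galois representation of an elliptic curve changes under a cyclic isogeny. Write $d_0=\deg\mu_0$, let $N\in\{14,18,10\}$ be the relevant order, and suppose $E(K)_{tors}$ contains the corresponding group. If $\gcd(d_0,N)=1$, then $E[N]\cong E''[N]$ as $\Gal(\overline{\Q}/K)$-modules, so $E''(K)_{tors}$ contains the same group; but $E''$ has rational $j$-invariant over a quadratic field, and that group is not on the classification just recalled — a contradiction. If $N$ and $d_0$ share a prime factor, the mod-$N$ images of $E$ and $E''$ are no longer conjugate, and here one must instead run through the curves in the $K$-isogeny class of $E$, comparing the fixed submodules of their (now explicitly constrained) mod-$N$ images and using that the class contains the rational-$j$ curve $E''$ whose torsion is already pinned down, to reach the same contradiction. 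This divisible-degree case is the main obstacle: it is precisely where one cannot transport torsion isomorphically along the isogeny and must invoke the paper's machinery on the whole isogeny class, and it is where the argument becomes genuinely specific to the three groups $\Z/14\Z$, $\Z/18\Z$ and $\Z/2\Z\oplus\Z/10\Z$.
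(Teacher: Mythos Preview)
Your reduction to $p=2$ and your construction of a $K$-rational cyclic isogeny $\mu_0\colon E\to E''$ with $j(E'')\in\Q$ are both fine and match what the paper does (the paper just cites \cite[Lemma~A.4]{cremona-najman} for the latter). There are, however, two genuine problems with the quadratic case.

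First, your list of exceptions is wrong. The group $\Z/2\Z\oplus\Z/10\Z$ \emph{does} occur as the torsion of a base change of a rational elliptic curve to a quadratic field; it is on Najman's list \cite[Theorem~2]{basechanges}. Comparing the Kamienny--Kenku--Momose list with Gu\v{z}vi\'{c}'s list of torsion subgroups of elliptic curves with rational $j$-invariant over quadratic fields leaves exactly $\Z/14\Z$ and $\Z/18\Z$ to eliminate, not three groups.

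Second, and more seriously, you do not actually eliminate $\Z/14\Z$ or $\Z/18\Z$: you describe the coprime-degree case correctly and then say that the divisible-degree case ``is the main obstacle'' and leave it there. That is where the proof \emph{is}. The paper handles these two groups by entirely different, and rather specific, arguments. For $\Z/14\Z$ it applies \Cref{izogenije-generalno} to deduce that the rational-$j$ curve $E''$ has a $14$-isogeny over $K$, and then cites \cite{Borna} to rule out $14$-isogenies of rational-$j$ curves over quadratic fields; your sketch of ``running through the isogeny class and comparing fixed submodules'' does not reach this, because the obstruction is about isogenies, not torsion points, and requires an external classification you do not invoke. For $\Z/18\Z$ the paper does not use the mod-$18$ representation at all: it quotes \cite{dujella-bosman} to see that any $E/K$ with $E(K)_{tors}\cong\Z/18\Z$ over quadratic $K$ is $2$-isogenous to its Galois conjugate, and then uses \cite[Corollary~A.12]{cremona-najman}, which says that if $j(E)$ lies in a rational isogeny class then the degree of the isogeny to its conjugate must be a square---contradicting degree $2$. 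Your proposed line of attack via constraining $\rho_{E'',18}$ gives no indication of how one would reach a contradiction in this case.
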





\begin{remark}  Under our definition, CM elliptic curves are also $\Q$-curves. In \cite{CMcase},  Clark, Corn, Rice and Stankewicz classified torsion of CM elliptic curves over number fields of degree at most $13$. They proved that groups which occur as torsion subgroups of CM curves over cubic fields are those from \Cref{mazur} along with $\Z/9\Z$ and $\Z/14\Z$. Over quintic fields, the only new group which appears is $\Z/11\Z$. Over septic fields, no new groups appear. In the rest of this article, we only focus on non-CM elliptic curves.
\end{remark}

\begin{remark}
    Some of the proofs in this paper were aided by the computer algebra system Magma. The auxiliary code is available in the following GitHub repository:
    \begin{center}
        \hyperlink{https://github.com/inova3c/torsion-of-Q-curves}{\texttt{https://github.com/inova3c/torsion-of-Q-curves}}.
    \end{center}

\end{remark}

The paper is structured as follows. In \Cref{generalities}, we list some general helpful results that will be used throughout the paper. In particular, we explicitly spell out the connection between the images of Galois representations of isogenous elliptic curves.

In \Cref{cubiccase}, we prove \Cref{mainresult}, i.e. we solve the problem for cubic extensions. We do this case by case, eliminating each group which occurs for elliptic curves over cubic extensions and does not occur for rational elliptic curves over cubic extensions. Most of the techniques used in the cubic case are also used in the quintic and septic case, which are then discussed in \Cref{case57}. In that case, we do not have a complete list of possible torsion subgroups, but we do have finitely many cases to check as there are finitely many degrees of cyclic isogenies of elliptic curves with rational $j$-invariant. Finally, in \Cref{finalpart}, we conclude the proof of \Cref{pomak-izo}. 

\section*{Acknowledgements}

The author thanks the anonymous referee for carefully reading the article, for numerous helpful comments, and for pointing out an error in a previous version, which has now been corrected. 

\section{General results}\label{generalities}
In this section, we recall some helpful results which will be used throughout the paper. Throughout the section, we will assume that all isogenies are cyclic.

It is a standard fact that every non-CM elliptic curve with rational $j$-invariant is a quadratic twist of an elliptic curve defined over $\Q$.  The following well-known result describes Galois representations of quadratic twists.

\begin{lemma}\label{twistanje}
    Let $E$ be an elliptic curve over a number field $K$. Let $E'$ be a quadratic twist of $E$ by $d \in K^\times$. Let $\psi:G_K \to \{-1, 1\}$ be the quadratic character defined by $\sigma(\sqrt{d})=\psi(\sqrt d) \sqrt d$ for $\sigma \in G_K$. Denote by $\rho_{E, N}$ and $\rho_{E', N}$ the mod $N$ Galois representations of $E$ and $E'$ respectively. Then, for an appropriate choice of bases of $E[N]$ and $E'[N]$, we have $$\rho_{E, N}(\sigma)=\psi(\sigma)\rho_{E'N}(\sigma).$$
\end{lemma}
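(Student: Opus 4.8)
The plan is to write down explicit models and an explicit isomorphism over $\overline{K}$ between $E$ and its twist $E'$, and then track how Galois acts through this isomorphism. First I would fix a Weierstrass model $E: y^2 = x^3 + ax + b$ with $a,b \in K$ (this is harmless since we only care about the Galois representation, hence about $E$ up to $K$-isomorphism, and any elliptic curve over a number field has such a model). The quadratic twist by $d \in K^\times$ is then $E': dy^2 = x^3 + ax + b$, or in standard form $E': Y^2 = X^3 + ad^2 X + bd^3$, and there is an explicit isomorphism $\phi: E' \to E$ defined over $K(\sqrt d)$ by $(X,Y) \mapsto (X/d, Y/d^{3/2})$ — equivalently $\phi(X,Y) = (X/d, Y/(d\sqrt d))$. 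This $\phi$ is defined over $\overline K$, and the key computation is how it interacts with $\sigma \in G_K$: since $\sigma$ fixes $a,b,d$ but sends $\sqrt d \mapsto \psi(\sigma)\sqrt d$, one checks directly from the formula that $\phi^\sigma = \psi(\sigma)\cdot \phi$ in the sense that $\phi^\sigma = [\psi(\sigma)]\circ \phi$, where $[\psi(\sigma)]$ is multiplication by $\pm 1$ on $E$ (negation when $\psi(\sigma) = -1$). This is the heart of the matter and the only real content.

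Next I would translate this into a statement about $N$-torsion. Choose a basis $P_1', P_2'$ of $E'[N]$ and let $P_i = \phi(P_i')$; since $\phi$ is an isomorphism of group schemes over $\overline K$, the $P_i$ form a basis of $E[N]$, and this is the ``appropriate choice of bases'' in the statement. For $\sigma \in G_K$ we compute $\sigma(P_i) = \sigma(\phi(P_i')) = \phi^\sigma(\sigma(P_i')) = [\psi(\sigma)](\phi(\sigma(P_i'))) = \psi(\sigma)\cdot \phi\big(\sigma(P_i')\big)$. Writing $\sigma(P_i') = \sum_j (\rho_{E',N}(\sigma))_{ji} P_j'$ and applying $\phi$, which is additive, we get $\sigma(P_i) = \psi(\sigma)\sum_j (\rho_{E',N}(\sigma))_{ji} P_j$, i.e. $\rho_{E,N}(\sigma) = \psi(\sigma)\rho_{E',N}(\sigma)$ as matrices in $\mathrm{GL}_2(\Z/N\Z)$, using that $\psi(\sigma) = \pm 1$ is central (it acts as a scalar). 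This gives exactly the claimed identity.

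The main obstacle — really the only subtlety — is bookkeeping: being careful about the direction of the isogeny versus its dual, about whether one twists $E$ to get $E'$ or vice versa (the roles are symmetric since the twist of the twist by $d$ is $E$ again, so the asserted relation is consistent), about the $\sigma$-conjugate of a morphism defined over $\overline K$, and about column-vs-row conventions for the matrix of a representation. There is also a minor point that one should note the statement is insensitive to the choice of Weierstrass model and of $d$ within its class in $K^\times/(K^\times)^2$, since changing $d$ by a square changes $\sqrt d$ by an element of $K^\times$ and hence does not change $\psi$. None of these require genuine new ideas; the explicit $(X,Y)\mapsto (X/d, Y/(d\sqrt d))$ computation does all the work, and the rest is careful translation into the language of Galois representations.
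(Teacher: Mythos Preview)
Your argument is correct and is the standard direct proof of this well-known fact: write down the explicit twisting isomorphism $\phi$ over $K(\sqrt d)$, verify $\phi^\sigma=[\psi(\sigma)]\circ\phi$, and transport a basis of $E'[N]$ to $E[N]$ via $\phi$. The paper does not give its own proof but simply refers to \cite[Lemma 5.1]{siksek2019}; what you have written is essentially the computation one finds there (and in many other places), so there is no meaningful difference in approach to discuss.
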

The proof can be found in \cite[Lemma 5.1]{siksek2019}.

Since non-CM $\Q$-curves over odd degree number fields are isogenous to elliptic curves with rational $j$-invariants, it is very useful to describe what happens to the image of Galois representations after applying isogenies. We start with the following lemma which describes what happens with a point of prime order on the starting elliptic curve. We denote the cyclotomic character modulo $N$ by $\chi_N$. 

\begin{lemma}\cite[Lemma 4.2.]{bourdon-najman}\label{tocka-prostog-reda}
    Let $E_1/K$ be an elliptic curve with a point $P \in E_1(K)$ of prime order $p$ and let $\varphi: E_1 \to E_2$ be an isogeny defined over $K$. Then either $E_2$ has a point of order $p$ defined over $K$ or for some basis of $E_2[p]$ $$\rho_{E_2, p}(\sigma)=\m{\chi_p(\sigma) & y \\
    0 & 1}, \ \text{for all } \sigma \in G_K.$$ 
\end{lemma}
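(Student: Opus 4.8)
The plan is to work with the subgroup $C = \ker \varphi \subseteq E_1$, which is a $G_F$-stable cyclic group since $\varphi$ is defined over $F$. The key dichotomy is whether or not $C$ meets the line $\langle P \rangle$ generated by the rational point $P$ of order $p$. First I would observe that $E_2[p]$ is generated by $\varphi(E_1[p^2])$: indeed, $\varphi$ restricts to a surjection $E_1[p^2] \to E_2[p]$ (after composing with the dual isogeny one multiplies by $\deg\varphi$, and we can reduce to the case $p \mid \deg \varphi$, the only case where something can be lost, the other case being trivial since then $\varphi$ induces an isomorphism on $p$-torsion). So every point of $E_2[p]$ is $\varphi(Q)$ for some $Q \in E_1[p^2]$.

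Next I would split into cases according to the intersection $C \cap \langle P\rangle$. If $\langle P \rangle \subseteq \ker\varphi$ is false, i.e. $P \notin \ker\varphi$, then $\varphi(P)$ is a point of order $p$ in $E_2(F)$, since $\varphi$ is defined over $F$ and $P \in E_1(F)$; this is the first alternative in the statement. So assume $P \in \ker\varphi$. Choose $Q \in E_1[p^2]$ with $pQ = P$ (possible because $P$ has order $p$); then $\varphi(Q)$ has order exactly $p$ in $E_2$ (it is killed by $p$ since $pQ = P \in \ker\varphi$, and it is nonzero since $Q \notin \ker\varphi$, as $\ker\varphi \cap \langle Q \rangle \subseteq \langle P \rangle$ would force $Q$ to have order dividing $p$). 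Set $P_2 = \varphi(Q)$; I claim $\langle P_2 \rangle$ is $G_F$-stable and $G_F$ acts on it by the cyclotomic character $\chi_p$. Extend $P_2$ to a basis $\{P_2, R_2\}$ of $E_2[p]$; then $\rho_{E_2,p}(\sigma) = \sm{a(\sigma) & y(\sigma) \\ 0 & d(\sigma)}$ for some characters $a, d$ and cocycle $y$, and the determinant relation gives $a \cdot d = \chi_p$, so it remains to show $a = \chi_p$, equivalently $d = 1$, i.e. $R_2$ generates a $G_F$-fixed quotient line.

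To pin down the characters I would use the Weil pairing together with the $G_F$-stability of $C$. Since $P \in C$ and $P$ is $F$-rational, and $C$ is cyclic of order $\deg\varphi = p^k m$ (with $p \nmid m$; after the reduction we may as well take $m=1$ and $k \geq 1$), the point $Q \in E_1[p^2]$ with $pQ = P$ generates together with $C[p] = \langle P \rangle$... more precisely: the Galois action on $E_1[p^2]$ fixes $P$ and stabilizes $C$, and one checks that the quotient $E_1[p^2]/C$ — which $\varphi$ identifies with $E_2[p]$ when $k \leq 2$ — carries the action described. The cleanest route is the Weil pairing $e_p$ on $E_2[p]$: $e_p(P_2, R_2)$ is a primitive $p$-th root of unity, so $\sigma$ acts on it by $\chi_p(\sigma)$, while on the other hand it acts by $a(\sigma) d(\sigma)$; combined with the fact that $P_2 = \varphi(Q)$ is a scalar multiple of $\varphi$ applied to a preimage of the rational point $P$, which forces $d(\sigma) = 1$ by tracking how $\sigma$ moves $Q$ inside the preimage $\varphi^{-1}(\langle P_2\rangle) \cap E_1[p^2]$ modulo $\langle P\rangle$. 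This yields $a = \chi_p$ and $d = 1$, which is exactly the asserted shape.

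I expect the main obstacle to be the bookkeeping in the case $p^2 \mid \deg\varphi$, where $E_2[p]$ is not simply $E_1[p^2]/(\ker\varphi \cap E_1[p^2])$ and one must instead factor $\varphi$ through intermediate isogenies (or argue directly via the dual isogeny and the fact that $\widehat\varphi \circ \varphi = [\deg\varphi]$) to control which line in $E_2[p]$ receives the rational point's "shadow". The safe way to handle this uniformly is to note that it suffices to prove the statement for $\varphi$ cyclic of degree exactly $p$ (every isogeny factors into prime-degree cyclic pieces and multiplication-by-$m$, the latter acting as an isomorphism on $p$-torsion, and the conclusion is clearly inherited along compositions: once $E_i$ has a rational point of order $p$ the same holds for all later curves, and the upper-triangular-with-$\chi_p$ shape with an $F$-rational eigenline for $\chi_p$ is preserved under further isogenies that kill that eigenline or not). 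Reducing to $\deg\varphi = p$ collapses all the case analysis: either $P \in \ker\varphi$, whence $\varphi$ has kernel $\langle P\rangle$ and $E_2[p] \cong E_1[p^2]/\langle P\rangle$ with the stated Galois action read off from $\rho_{E_1, p^2}$, or $P \notin \ker\varphi$ and $\varphi(P)$ is the desired rational point.
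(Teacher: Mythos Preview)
The paper does not give its own proof of this lemma; it is cited from \cite{bourdon-najman}. That said, the paper proves the more general Proposition~\ref{izogenije-generalno} by directly computing $\rho_{E_2,N}(\sigma)$ in a basis $\{R,S\}$ of $E_2[N]$ chosen so that $\frac{N}{\alpha}R=\varphi(P)$ and $\frac{N}{\beta}S=\varphi(Q)$, and specializing that computation to $N=p$, $H=\{1\}$ recovers the present lemma in one stroke (the case $\alpha=p$ gives the rational point $\varphi(P)$; the case $\alpha=1$ gives the lower-triangular shape $\sm{1&0\\ *&\chi_p}$, which is the stated form after swapping basis vectors). So the paper's implicit argument is a single direct computation, with no reduction to prime-degree isogenies.

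Your route---factor $\varphi$ into degree-$p$ pieces and induct---is a legitimate alternative, and your base case is essentially correct (though ``$E_2[p]\cong E_1[p^2]/\langle P\rangle$'' is a miscount; you mean $E_2[p]\cong [p]^{-1}\langle P\rangle/\langle P\rangle$). But the inductive step, as you describe it, does not work. You claim each alternative of the dichotomy is separately preserved under a further degree-$p$ isogeny: that a rational $p$-point propagates to all later curves, and that the upper-triangular shape ``is preserved under further isogenies that kill that eigenline or not''. Both assertions are false. Quotienting by the line through a rational point destroys it---this is precisely the situation you analyse in your base step. And if $\rho_{E_i,p}$ has the shape $\sm{\chi_p & *\\ 0 & 1}$ but $E_i$ has no rational $p$-point, then the $\chi_p$-eigenline is the \emph{only} $G_F$-stable line in $E_i[p]$, so the kernel of the next degree-$p$ isogeny is forced to be that line; one then checks that the image of the second basis vector is a rational $p$-point on $E_{i+1}$, so the shape is not preserved but exchanged for the other alternative. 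The disjunction does survive the induction, but only via this swapping mechanism, which your sketch omits. Once this is fixed the argument goes through; the Weil-pairing paragraph in the middle can then be discarded, since it never actually pins down which diagonal character is $\chi_p$ and which is $1$ without the explicit computation you defer to the end anyway.
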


To avoid referring to bases, we may rephrase \Cref{tocka-prostog-reda} as follows.

\begin{corollary}\label{eigen1}
     Let $E_1/F$ be an elliptic curve with a point $P \in E_1(K)$ of prime order $p$ and let $\varphi: E_1 \to E_2$ be an isogeny defined over $K$. Then every $\alpha \in \rho_{E_2, p}(G_K)$ has eigenvalue $1$.
\end{corollary}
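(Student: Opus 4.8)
The plan is simply to unwind the dichotomy provided by \Cref{tocka-prostog-reda} and observe that having an eigenvalue equal to $1$ is a conjugation-invariant condition, so it does not depend on the choice of basis.

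First I would invoke \Cref{tocka-prostog-reda} with the same data $E_1/F$, $P$, and $\varphi$, which splits into two cases. In the first case, $E_2$ has a point $Q \in E_2(F)$ of order $p$. Then $\sigma(Q) = Q$ for all $\sigma \in G_F$, which means that with respect to \emph{any} basis of $E_2[p]$ extending $Q$, each matrix $\rho_{E_2, p}(\sigma)$ fixes the nonzero vector corresponding to $Q$; hence $1$ is an eigenvalue of every $\rho_{E_2, p}(\sigma)$, and therefore of every $\alpha \in \rho_{E_2, p}(G_F)$. In the second case, \Cref{tocka-prostog-reda} gives a basis in which $\rho_{E_2, p}(\sigma) = \m{\chi_p(\sigma) & y \\ 0 & 1}$ for all $\sigma \in G_F$; since this matrix is upper triangular with a $1$ on the diagonal, its characteristic polynomial has $1$ as a root, so again $1$ is an eigenvalue.

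The only point that requires a word of justification is that the conclusion ``every $\alpha \in \rho_{E_2, p}(G_F)$ has eigenvalue $1$'' is independent of the basis used to realize $\rho_{E_2, p}$: a change of basis replaces each $\alpha$ by $M \alpha M^{-1}$, which has the same characteristic polynomial and hence the same eigenvalues. So once $1$ is an eigenvalue in one basis, it is an eigenvalue in all bases, and the statement about the abstract subgroup $\rho_{E_2, p}(G_F) \subset \GL_2(\F_p)$ makes sense and holds.

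I do not anticipate any real obstacle here: the corollary is a cosmetic reformulation of \Cref{tocka-prostog-reda} designed to remove the dependence on a chosen basis, and the entire content is the two-case split above together with the invariance of eigenvalues under conjugation.
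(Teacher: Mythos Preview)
Your proposal is correct and matches the paper's approach: the paper treats this corollary as an immediate basis-free rephrasing of \Cref{tocka-prostog-reda} and gives no separate proof, and your two-case unwinding together with conjugation-invariance of eigenvalues is exactly what that rephrasing amounts to.
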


Combining  \Cref{twistanje} and \Cref{tocka-prostog-reda}, we obtain the following. 

\begin{corollary}\label{izogenija i eigen+-1}
    Let $E$ be a $\Q$-curve over an odd degree number field $K$ with a point of order $p$ defined over $K$, where $p$ is a prime number. Let $\phi:E\to E'$ be an isogeny, where $E'$ is a curve with rational $j$-invariant, and let $E''$ be a quadratic twist of $E'$ which is defined over $\Q$. Then $E''$ has a $p$-isogeny defined over $K$ and any element of $\rho_{E'',p}(G_K)$ has eigenvalue $1$ or $-1$.
\end{corollary}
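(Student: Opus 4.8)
The plan is to combine the three ingredients that have just been set up, tracking the isogeny degrees and the quadratic twist carefully. Write $\phi\colon E\to E'$ for the given isogeny and let $\psi\colon E'\to E$ be a dual isogeny, so the composition $\psi\circ\phi$ is multiplication by some integer $m$ on $E$. Since $E$ has a point $P$ of order $p$ over $K$, either $p\mid m$, in which case we can replace $\phi$ by a suitable further composition (or simply observe $P$ maps into a cyclic $p$-group), or $\phi(P)$ is again a point of exact order $p$ on $E'$ defined over $K$. In either case the cyclic subgroup generated by the image of $P$ (or a point it meets under the isogeny) is a $K$-rational subgroup of order $p$, so $E'$ — and hence every quadratic twist $E''/\Q$ of $E'$, since twisting does not change the isogeny class over $K$ — admits a $p$-isogeny defined over $K$. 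This gives the first assertion.

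For the eigenvalue claim, I would first apply \Cref{tocka-prostog-reda} (or its reformulation \Cref{eigen1}) to the isogeny $\phi\colon E_1=E\to E_2=E'$: since $E$ has a $K$-rational point of order $p$, every element of $\rho_{E',p}(G_K)$ has eigenvalue $1$. Now pass from $E'$ to its quadratic twist $E''/\Q$ by $d\in K^\times$. By \Cref{twistanje}, there are compatible bases of $E'[p]$ and $E''[p]$ for which $\rho_{E',p}(\sigma)=\psi(\sigma)\,\rho_{E'',p}(\sigma)$, where $\psi(\sigma)\in\{-1,1\}$. Hence for each $\sigma\in G_K$, the matrix $\rho_{E'',p}(\sigma)=\psi(\sigma)^{-1}\rho_{E',p}(\sigma)=\pm\rho_{E',p}(\sigma)$; since $\rho_{E',p}(\sigma)$ has $1$ as an eigenvalue, $\rho_{E'',p}(\sigma)$ has $\psi(\sigma)^{-1}=\pm 1$ as an eigenvalue. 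Thus every element of $\rho_{E'',p}(G_K)$ has eigenvalue $1$ or $-1$, which is the second assertion.

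The main thing to be careful about is the very first step: the given isogeny $\phi$ need not be cyclic, and $p$ could divide its degree, so one cannot blindly say "$\phi(P)$ has order $p$." The clean way around this is to use the remark after the Cremona–Najman proposition, which lets us take $\phi$ (composed with the odd-degree base change business) to be a cyclic isogeny to a curve with rational $j$-invariant; then either $p\nmid\deg\phi$ and $\phi(P)$ has order $p$, or $p\mid\deg\phi$ and one checks that $E'$ still has a rational $p$-torsion point coming from $E[p]/(\ker\phi\cap E[p])$ — in both subcases \Cref{tocka-prostog-reda} applies with $E_1=E$, giving the eigenvalue $1$ on $\rho_{E',p}(G_K)$. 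Everything after that is the routine twisting argument of \Cref{twistanje}, which is where the sign ambiguity $1$ versus $-1$ enters and cannot in general be removed — $E''$ need not itself have a $K$-rational $p$-torsion point, only a $p$-isogeny.
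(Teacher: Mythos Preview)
Your core argument---apply \Cref{eigen1} to get eigenvalue $1$ for every element of $\rho_{E',p}(G_K)$, then use \Cref{twistanje} so that $\rho_{E'',p}(\sigma)=\pm\rho_{E',p}(\sigma)$ has eigenvalue $\pm1$---is exactly the combination of \Cref{twistanje} and \Cref{tocka-prostog-reda} that the paper intends, and the $p$-isogeny on $E''$ follows because $\rho_{E'',p}(G_K)$ is still upper triangular in the same basis. Two minor remarks: the sentence ``twisting does not change the isogeny class over $K$'' is not literally correct (quadratic twists need not be $K$-isogenous; what is true, and sufficient here, is that a $G_K$-stable line in $E'[p]$ remains $G_K$-stable after multiplying the action by $\pm1$), and your third-paragraph worries about cyclicity of $\phi$ or whether $p\mid\deg\phi$ are unnecessary, since \Cref{tocka-prostog-reda} is stated for an arbitrary isogeny and already covers both alternatives.
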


The following proposition, which slightly generalizes \cite[Proposition 4.4.]{bourdon-najman},  describes what happens in the composite case.

\begin{proposition}\label{izogenije-generalno}
    Let $E_1/K$ be an elliptic curve with a point $P$ of order $N$ such that $\langle P \rangle$ is fixed by $G_K$. Denote by $H \leq (\Z /N\Z)^\times$ the subgroup $\{k \in (\Z/N\Z)^\times \mid \sigma(P)=kP \text{ for some } \sigma \in G_K\}$. 
    
    Let $\alpha$ be a divisor of $N$ and $\phi:E_1 \to E_2$ the  $\alpha$-isogeny defined over $K$ with kernel $\alpha\cdot P$. Then the image of the mod $N$ representation $\rho_{E_2, N}(G_K)$ is conjugate to a subgroup of the group of all matrices of the form $$\m{ h & \frac{N}{\alpha}u \\ \alpha v & z},$$ where $h \in H$.
\end{proposition}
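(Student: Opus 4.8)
The plan is to build a basis of $E_2[N]$ adapted simultaneously to the image of $P$ under $\phi$ and to the kernel of the dual isogeny, and then to read off the four entries of $\rho_{E_2,N}(\sigma)$ one by one, with the Weil pairing supplying the crucial control on the $(1,1)$-entry. Write $D=\deg\phi$, let $\hat\phi\colon E_2\to E_1$ be the dual isogeny (again cyclic, of degree $D$, with $\hat\phi\circ\phi=[D]_{E_1}$ and $\phi\circ\hat\phi=[D]_{E_2}$), and choose a basis $\{P,R\}$ of $E_1[N]$; since $\langle P\rangle$ is $G_F$-stable, $\rho_{E_1,N}$ is upper triangular in this basis, say with diagonal characters $\chi$ (the one in the statement, so $\operatorname{im}\chi=H$) and $\delta$. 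Note first that $\langle P\rangle\cap\ker\phi$ is the cyclic subgroup of $\langle P\rangle$ of order $N/\alpha$, so in particular $N/\alpha\mid\gcd(N,D)$. Peeling off the factor of $\phi$ whose degree is coprime to $N$ (it is an isomorphism on $E_2[N]$ as a $G_F$-module and sends $P$ to a point with the same cyclic span, the same $H$, and the same order), and then working one rational prime at a time via the Chinese Remainder Theorem, we reduce to the case where $N=\ell^k$ and $D=\ell^j$ are powers of a single prime; the general case is recovered by gluing the prime-power bases, because the argument below determines the $(1,1)$-entry to be \emph{exactly} $\chi(\sigma)$ in each component.

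Now write $\alpha=\ell^s$. Since $Q:=\phi(P)$ has order $\ell^s$, it lies in $\ell^{k-s}E_2[\ell^k]$, so there is a point $e_1\in E_2[\ell^k]$ of order $\ell^k$ with $\ell^{k-s}e_1=Q$; this will be the first basis vector. Applying $\sigma\in G_F$ to $\ell^{k-s}e_1=Q$ and using $\sigma(Q)=\chi(\sigma)Q$ shows $\sigma(e_1)-\chi(\sigma)e_1\in E_2[\ell^{k-s}]$; expressed in any basis completing $e_1$, this says precisely that the $(2,1)$-entry of $\rho_{E_2,N}(\sigma)$ is divisible by $\alpha$ and that its $(1,1)$-entry is congruent to $\chi(\sigma)$ modulo $\alpha$. (When $s=0$ this already handles everything via \Cref{tocka-prostog-reda}-type reasoning; the content is in $s>0$.)

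For the second basis vector I would exploit that $\phi(E_1[N])$ is a $G_F$-stable subgroup of $E_2[N]$ containing $\langle Q\rangle$ and that $\ker\hat\phi\cap E_2[N]$ is a $G_F$-stable cyclic subgroup; tracing $\hat\phi$ through the (cyclic, $G_F$-stable, order $N$) quotient $E_1[N]/\langle P\rangle$, on which $G_F$ acts by $\delta$, one produces a $G_F$-stable cyclic subgroup of $E_2[N]/\langle Q\rangle$ of order $N$ on which $\sigma$ acts by $\delta$, and lifts a generator of it to a point $e_2\in E_2[N]$ of order $N$ completing $e_1$ to a basis. In this basis the $(1,2)$-entry of $\rho_{E_2,N}(\sigma)$ is divisible by $N/\alpha$ and the $(2,2)$-entry equals $\delta(\sigma)$. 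Finally, since $\alpha\cdot\tfrac{N}{\alpha}=N$, the off-diagonal product vanishes modulo $N$, so the Weil-pairing identity $\det\rho_{E_2,N}(\sigma)=\chi_N(\sigma)=\det\rho_{E_1,N}(\sigma)=\chi(\sigma)\delta(\sigma)$ reads ``$(1,1)\text{-entry}\cdot\delta(\sigma)=\chi(\sigma)\delta(\sigma)$'' modulo $N$; as $\delta(\sigma)$ is a unit, the $(1,1)$-entry equals $\chi(\sigma)\in H$. Re-gluing over the primes of $N$ finishes the proof.

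The step I expect to be the main obstacle is the construction of $e_2$: one must analyse the $G_F$-module structure of $E_2[N]$ as an extension assembled from $\phi(E_1[N])$ and $\ker\hat\phi\cap E_2[N]$, and verify both that it contains a $G_F$-stable cyclic subgroup of order $N$ whose image in $E_2[N]/\langle Q\rangle$ carries the character $\delta$, and that a generator of this subgroup can be chosen to complement $\langle e_1\rangle$ in a basis. The reduction to prime-power level is what keeps this case analysis (distinguishing, for instance, $j\ge k$ from $j<k$) finite and tractable; everything else is bookkeeping with the Weil pairing and with cyclic subgroups of $(\mathbb{Z}/\ell^k\mathbb{Z})^2$.
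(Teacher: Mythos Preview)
Your overall architecture matches the paper's: pick a first basis vector $e_1$ of $E_2[N]$ lifting $\phi(P)$, read off that the $(2,1)$-entry is divisible by $\alpha$ and the $(1,1)$-entry is congruent to $\chi(\sigma)$ modulo $\alpha$, then use $\det\rho_{E_2,N}=\chi_N=\chi\cdot\delta$ together with the vanishing of the off-diagonal product modulo $N$ to force the $(1,1)$-entry to equal $\chi(\sigma)\in H$ on the nose. The determinant step and the choice of $e_1$ are essentially identical to what the paper does.

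Where you leave a gap---the construction of $e_2$, which you yourself flag as the main obstacle---the paper takes a direct route that avoids both the prime-power reduction and the dual isogeny. Complete $P$ to a basis $\{P,Q\}$ of $E_1[N]$, let $\beta$ be the order of $\phi(Q)$, and first prove the elementary fact $\lcm(\alpha,\beta)=N$ (by writing a generator of $\ker\phi\cap E_1[N]$ in the basis $\{P,Q\}$). Then take $S$ to be a point of order $N$ with $\tfrac{N}{\beta}S=\phi(Q)$ such that $\{R,S\}$ is a basis of $E_2[N]$; this $S$ is your $e_2$. Computing $\sigma(S)$ via $\tfrac{N}{\beta}\sigma(S)=\phi(\sigma(Q))$ and then taking determinants shows directly that the $(1,2)$-entry is divisible by $N/\alpha$ and the $(2,2)$-entry equals $h^{-1}\chi_N(\sigma)=\delta(\sigma)$---exactly the two properties you wanted, obtained without any extension-of-modules analysis or use of $\hat\phi$. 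So the obstacle you identified dissolves once you observe that a lift of $\phi(Q)$ already serves as $e_2$, with the single identity $\lcm(\alpha,\beta)=N$ supplying all the needed divisibilities.
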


\begin{proof}
    Let $\{P, Q\}$ be a basis for $E_1[N]$. We claim that $\phi(Q)$ has order $N$. Indeed, if $m\phi(Q)=0$ for $m<N$, then $mQ\in \ker \phi$. Since $\ker \phi$ is generated by a multiple of $P$, one has $mQ+d P=0$ for some integer $d$. However, $P$ and $Q$ are independent, so $m=d\alpha=0$. 

    Now let $P_0$ be a point on $E_1[N]$ such that $\frac{N}{\alpha} P_0=P$. Let $R=\phi(P_0)$ and $S=\phi(Q)$. We claim that $\{R,S\}$ is a basis for $E_2[N]$. Both points are of order $N$, it remains to prove they are independent. If $aR+bS=0$, then $\phi(aP_0+bQ)=0$, so $aP_0+bQ=d\alpha P$ for some number $d$. It follows that $\frac{N}{\alpha}$ divides $a$ since $aP_0$ is of order $N$, and then we have a linear combination of $P$ and $Q$ being equal to $0$. From here, we get $b=0$ and the independence is proven.

    We now compute the action of $\sigma \in G_K$ on both elements of this basis. We know that $$\frac{N}{\alpha}\sigma(R)=\sigma(\phi(P))=\phi(\sigma(P))=h\phi(P)=h \frac{N}{\alpha} R$$ for some $h \in H$,  so $$\sigma(R)=hR+\alpha (mR+nS)=(h+m\alpha)R+n\alpha S$$ for some coefficients $m,n$.

    Furthermore, $\sigma(S)=\sigma(\phi(Q))=\phi(\sigma(Q))$. If we set $\sigma(Q)=uP+vQ$, we get $$\sigma(S)=u\phi(P)+v\phi(Q)=\frac{N}{\alpha}uR+vS.$$
    Note that  $v$ is the bottom right entry of $\rho_{E_1, N}(\sigma)$, and $h$ is the top left entry of $\rho_{E_1, N}(\sigma)$. Their product $hv$ is the determinant of $\rho_{E_1, N}(\sigma)$ which is equal to the cyclotomic character $\chi_N$ by the Weil pairing. This does not depend on the elliptic curve.

    Written as a matrix, we have $$\rho_{E_2, N}(\sigma)=\m{h+m\alpha & \frac{uN}{\alpha} \\ n\alpha & v}.$$

    Taking the determinant, we obtain $$\chi_N(\sigma)=(h+m\alpha)\cdot v.$$ 
    However, we know from before that $\chi_N(\sigma)=hv$. We get $m\alpha v=0$ and hence $m\alpha=0$. Thus, the top left entry of $\rho_{E_2, N}(\sigma)$ is $h$.
\end{proof}

We also spell out the special case when $E_1$ has a point of order $N$ over $K$, not just an $N$-isogeny. This statement is in fact equivalent to \cite[Proposition 4.4]{bourdon-najman}

\begin{corollary}\label{slozeni}
    Let $E_1/K$ be an elliptic curve with a point $P$ of order $N$ defined over $K$. Let $\alpha$ be a divisor of $N$ and $\phi:E_1 \to E_2$ the  $\alpha$-isogeny defined over $K$ with kernel $\alpha\cdot P$. Then the image of the mod $N$ representation $\rho_{E_2, N}(G_K)$ is conjugate to a subgroup of the group of all matrices of the form $$\m{ 1 & \frac{N}{\alpha}u \\ \alpha v & z}.$$
\end{corollary}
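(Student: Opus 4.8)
The plan is to obtain this as the special case of \Cref{izogenije-generalno} in which the $G_F$-stable cyclic subgroup $\langle P\rangle\le E_1[N]$ is not merely stable but fixed \emph{pointwise}. Indeed, since $P$ is defined over $F$ we have $\sigma(P)=P$ for every $\sigma\in G_F$, so in the notation of \Cref{izogenije-generalno} the subgroup $H\le(\Z/N\Z)^\times$ equals $\{1\}$. Feeding $h=1$ into the matrix shape supplied by that proposition immediately gives that the relevant mod $N$ image is conjugate into the group of matrices $\m{1 & \frac N\alpha u \\ \alpha v & z}$, which is precisely the asserted form. So at this level the corollary needs no work beyond recognizing that a rational point of order $N$, as opposed to merely a rational cyclic subgroup of order $N$, is exactly what forces $H$ to be trivial.

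The one genuinely substantive point is to keep straight which representation is being described. \Cref{izogenije-generalno} is stated for the \emph{target} curve $E_2$: its proof fixes a basis of $E_2[N]$ adapted to $\phi(P)$ and to a complementary generator, and the divisibility of the off-diagonal entries by $\frac N\alpha$ and $\alpha$ records how the $N$-isogeny coming from $\langle P\rangle$ splits on $E_2$ into an $\alpha$-isogeny and an $\frac N\alpha$-isogeny. Thus the self-contained argument I would write is simply to rerun the bookkeeping in the proof of \Cref{izogenije-generalno}, noting additionally that here $\phi(P)$ is not only a generator of a $G_F$-stable subgroup of $E_2$ but is itself a point of $E_2(F)$ of order $\alpha$, because $\sigma(\phi(P))=\phi(\sigma(P))=\phi(P)$; this is exactly what eliminates the top-left entry $h$ and pins it to $1$. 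No extra computation is required.

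If instead one wants a statement of this shape extracted directly from the mod $N$ image of $E_1$ itself, the route would be: choose a basis of $E_1[N]$ beginning with $P$, so that $\rho_{E_1,N}(\sigma)=\m{1 & b(\sigma) \\ 0 & \chi_N(\sigma)}$ and the lower-left entry is already divisible by $\alpha$; reduce to $N=p^e$ a prime power by the Chinese Remainder Theorem; and try to exploit the $G_F$-stable cyclic subgroup $C:=E_1[p^e]\cap\ker\phi$ together with the fact that $C\cap\langle P\rangle=\langle\alpha P\rangle$ has order $p^e/\alpha$ and is pointwise fixed, which forces the character through which $G_F$ acts on the cyclic group $C$ to be $\equiv 1\pmod{p^e/\alpha}$ and thereby constrains $b(\sigma)$. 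The step I expect to be the real obstacle is the case where $C$ already lies inside $\langle P\rangle$: there $\phi$ contributes no $G_F$-stable structure to $E_1[p^e]$ beyond what the rational point $P$ alone provides, so this route yields no new control on $b(\sigma)$ — which is why the clean formulation, and the one that is actually invoked afterwards, is the one for the curve obtained by applying $\phi$, as produced in the first two paragraphs.
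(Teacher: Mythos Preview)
Your proof is correct and matches the paper's: the corollary is simply \Cref{izogenije-generalno} specialized to $H=\{1\}$, and the paper offers no proof beyond calling it a ``special case.''

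You are also right to flag the discrepancy in the statement. As written, the corollary refers to $\rho_{E_1,N}(G_F)$, but \Cref{izogenije-generalno} constrains $\rho_{E_2,N}(G_F)$, and every subsequent application of \Cref{slozeni} in the paper (in Propositions~\ref{eliminiraj16}, \ref{eliminiraj2x10}, and Lemma~\ref{5izogenije}) uses it for the \emph{target} curve of the isogeny. So this is a typo in the statement: it should read $\rho_{E_2,N}$. Your third paragraph, probing whether the literal $E_1$-version could be rescued, correctly identifies why it cannot in general (when $\ker\phi\cap E_1[N]\subset\langle P\rangle$ there is no extra constraint on $E_1$), but you can safely drop that discussion once the typo is noted.
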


The following result is also well-known, but we reprove it for the sake of completion.

\begin{lemma}\label{skok-s-izogenijom}
    Let $K$ be a number field, and let $M$ and $N$ be positive integers. Suppose that for some elliptic curve $E/K$, there are generators $P$ and $Q$ of $E[M^2N]$ such that $[M]P$ and $[MN]Q$ lie in $K$. Then $E$ is $K$-isogenous to an elliptic curve $E_1/K$ such that $\rho_{E_1, M^2N}(G_K)$ is conjugate to a subgroup of the group of matrices of the form $$\m{1+MN\cdot k & * \\ 0 & *}.$$
\end{lemma}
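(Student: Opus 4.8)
The idea is to exhibit $E_1$ directly as an isogenous quotient of $E$, the right choice being $\phi\colon E\to E_1:=E/\langle[MN]Q\rangle$. Since $[MN]Q\in E(K)$, the order-$M$ subgroup $\langle[MN]Q\rangle$ is pointwise fixed by $G_K$, so $\phi$ is defined over $K$. First I would record the orders of the images: as $\{P,Q\}$ is a basis of $E[M^2N]\cong(\Z/M^2N\Z)^2$, we have $\langle P\rangle\cap\langle[MN]Q\rangle=0$, so $\phi(P)$ still has order $M^2N$; and $\langle Q\rangle\cap\langle[MN]Q\rangle=\langle[MN]Q\rangle$, so $\phi(Q)$ has order $MN$ and hence $[MN]\phi(Q)=O$.

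The heart of the argument is that quotienting by $\langle[MN]Q\rangle$ ``straightens'' the line $\langle P\rangle$. Because $[M]P\in E(K)$, for every $\sigma\in G_K$ we have $\sigma(P)-P\in E[M]=\langle[MN]P,[MN]Q\rangle$, so $\sigma(P)=P+a[MN]P+b[MN]Q$ for some integers $a,b$. Applying $\phi$, which commutes with $G_K$, and using $\phi([MN]Q)=[MN]\phi(Q)=O$ yields
$$\sigma(\phi(P))=\phi(P)+a[MN]\phi(P)=[1+aMN]\,\phi(P).$$
Thus $\langle\phi(P)\rangle$ is a $G_K$-stable cyclic subgroup of $E_1[M^2N]$ of order $M^2N$ on which $G_K$ acts through a character taking values $\equiv 1\pmod{MN}$.

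To conclude, I would extend $\phi(P)$ — a vector of order equal to the full exponent $M^2N$, hence primitive — to a basis $\{\phi(P),Q_1\}$ of $E_1[M^2N]$. With respect to this basis, $G_K$-stability of $\langle\phi(P)\rangle$ makes $\rho_{E_1,M^2N}(G_K)$ upper triangular, and the displayed computation shows its top-left entry is $1+MN\cdot k$ with $k=a$ depending on $\sigma$; this is exactly the claimed form. I do not expect a serious obstacle here: the only points to be careful about are the bookkeeping of the orders of $\phi(P)$ and $\phi(Q)$ in the quotient, the elementary fact that a vector of full order in $(\Z/M^2N\Z)^2$ extends to a $\Z/M^2N\Z$-basis, and noticing that the \emph{other} natural choice $E/\langle[M]P\rangle$ would only yield the weaker congruence $\equiv 1\pmod M$, so one really must quotient by $\langle[MN]Q\rangle$.
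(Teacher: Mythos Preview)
Your proof is correct and follows exactly the same approach as the paper: quotient by $\langle [MN]Q\rangle$ and use the basis $\{\phi(P),Q_1\}$ of $E_1[M^2N]$. The paper's proof is extremely terse (two sentences), and you have simply filled in the details the author left implicit, including the verification that $\phi(P)$ has full order and the key computation $\sigma(\phi(P))=(1+aMN)\phi(P)$.
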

\begin{proof}
    Take an $M$-isogeny $\phi:E\to E_1$ whose kernel is $[MN]Q$. Take a basis of the form $\{\phi(P),S\}$ of $E_1[M^2N]$. In this basis, $\rho_{E_1, M^2N}(G_K)$ has the required shape.
\end{proof}

We will also sometimes use the following result.

\begin{proposition}\cite[Proposition 3.3.]{cremona-najman}\label{nadneparnimnistanovo}
    Let $\ell$ be an odd prime. If a non-CM elliptic curve has no $\ell$-isogenies defined over $\Q$, then it also has no $\ell$-isogenies defined over $K$ for any odd-degree number field $K$, unless $\ell=7$ and $j(E)=2268945/128$, in which case there is a $7$-isogeny defined over the cubic field $\Q(\alpha)$, where $\alpha^3-5\alpha-5=0$.
\end{proposition}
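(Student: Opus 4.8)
The plan is to reinterpret the existence of an $\ell$-isogeny over a field $F\supseteq\Q$ as the existence of a $G_F$-stable line in $E[\ell]$, i.e.\ of a fixed point for the projective action of $\rho_{E,\ell}(G_F)$ on the set $\PP^1(\F_\ell)$ of the $\ell+1$ subgroups of order $\ell$, and then to analyse the orbits of $\rho_{E,\ell}(G_\Q)$ on $\PP^1(\F_\ell)$ via the classification of subgroups of $\GL_2(\F_\ell)$. Concretely, if $[K:\Q]$ is odd and $E$ has an $\ell$-isogeny over $K$, then $\rho_{E,\ell}(G_\Q)$ has an orbit on $\PP^1(\F_\ell)$ of size dividing $[K:\Q]$, hence of odd size; conversely an orbit of odd size $m$ produces, as the fixed field of a point-stabiliser, a degree-$m$ field over which $E$ gains an $\ell$-isogeny. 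Having no $\ell$-isogeny over $\Q$ means $\rho_{E,\ell}(G_\Q)$ has no fixed point on $\PP^1(\F_\ell)$. So it suffices to show that a non-CM $E/\Q$ whose mod-$\ell$ image has no fixed point on $\PP^1(\F_\ell)$ also has no orbit of odd size there, apart from the stated exception. Note that, since $\ell+1$ is even, the odd orbits come in pairs, so either there are none or there are at least two.

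Write $G=\rho_{E,\ell}(G_\Q)\le\GL_2(\F_\ell)$ (so $\det G=\F_\ell^\times$ by the Weil pairing) and let $\overline G\le\mathrm{PGL}_2(\F_\ell)$ be its projective image. By Dickson's classification of subgroups of $\mathrm{PGL}_2(\F_\ell)$ together with Serre's results on Galois images of non-CM elliptic curves over $\Q$, either $\overline G\supseteq\mathrm{PSL}_2(\F_\ell)$, or $\overline G$ lies in a Borel subgroup, or $\overline G$ lies in the normaliser of a split or non-split Cartan subgroup, or $\overline G$ is conjugate to $A_4$, $S_4$ or $A_5$. If $\overline G\supseteq\mathrm{PSL}_2(\F_\ell)$ then $\overline G$ is transitive on $\PP^1(\F_\ell)$, giving a single orbit of (even) size $\ell+1$. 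If $\overline G$ lies in a Borel, or in a split Cartan, then it has a fixed point (two, in the split Cartan case), so $E$ has an $\ell$-isogeny over $\Q$ — excluded. Moreover a short direct computation shows that the \emph{full} normaliser of a split Cartan has orbits of sizes $2$ and $\ell-1$ on $\PP^1(\F_\ell)$, while the \emph{full} normaliser of a non-split Cartan acts transitively; in either case all orbits are even. Hence an odd orbit can only arise when $\overline G$ is a \emph{proper} subgroup of a Cartan normaliser (not contained in the Cartan), or when $\overline G$ is one of the exotic groups $A_4,S_4,A_5$.

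To finish, I would invoke the classification of the possible mod-$\ell$ images of non-CM elliptic curves over $\Q$: exotic projective images, and images strictly contained in a Cartan normaliser, occur only for small $\ell$, and in each such case the associated $j$-invariants — or the parametrising modular curve $X_H(\ell)$ — are explicitly known. This reduces the remaining work to an explicit finite list of curves, and for each one computes the orbit sizes of the mod-$\ell$ image acting on $\PP^1(\F_\ell)$. The outcome is that the only curve producing an odd orbit (with no fixed point) is the one with $\ell=7$ and $j(E)=2268945/128$: its projective mod-$7$ image is conjugate to an $S_3$ contained in the normaliser of a split Cartan of $\mathrm{PGL}_2(\F_7)$, acting on the $8$ points of $\PP^1(\F_7)$ with orbit sizes $2,3,3$ and no fixed point (note that, since a subgroup with surjective determinant cannot itself be isomorphic to $S_3$, the group $G$ is strictly larger than its projective image). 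The stabiliser of a point in an orbit of size $3$ cuts out a cubic field, which a direct computation — factoring the relevant factor of the $7$-division polynomial, or using the modular interpretation — identifies with $\Q(\alpha)$ where $\alpha^3-5\alpha-5=0$; and this same orbit of size $3$ provides the claimed $7$-isogeny over that cubic field.

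The main obstacle is this concluding verification: one must be certain that, among all non-CM $E/\Q$ whose mod-$\ell$ image is a proper subgroup of a Cartan normaliser or an exotic subgroup, the single curve with $j=2268945/128$ is the only one giving an odd Galois orbit. This depends on the explicit determination of rational points on the relevant modular curves (the genus-$\ge 2$ cases being finite and known), together with the computation above showing that the infinite families — the curves realising a \emph{full} Cartan normaliser — contribute only even orbits. Everything else in the argument is routine group theory and a division-polynomial computation.
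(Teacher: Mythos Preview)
The paper does not prove this proposition; it is quoted from Cremona--Najman and used as a black box throughout. There is thus no proof in the present paper to compare yours against.

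Your outline is nonetheless the standard route to results of this type and is sound. Recasting an $\ell$-isogeny over $K$ as a $G_K$-fixed point on $\PP^1(\F_\ell)$, deducing an odd $\rho_{E,\ell}(G_\Q)$-orbit from $[K:\Q]$ odd, and then running through Dickson's classification is exactly the right strategy; your orbit computations for the full split and non-split Cartan normalisers are correct, as is the parity remark that odd orbits on a set of size $\ell+1$ must come in pairs. Where you defer to ``the classification of the possible mod-$\ell$ images of non-CM $E/\Q$'' you are invoking a substantial body of work (Zywina's tables, Bilu--Parent--Rebolledo for the split Cartan case, etc.), and this is indeed where the content lies --- you correctly flag it as the main obstacle, but in a written proof you would need to cite these inputs explicitly rather than leave them implicit. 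One sharpening that streamlines the Cartan-normaliser subcases: complex conjugation yields an element of $\rho_{E,\ell}(G_\Q)$ with eigenvalues $1,-1$, hence of projective order $2$, so $|\overline G|$ is always even; this already disposes of any odd cyclic projective image sitting inside a non-split Cartan, and reduces the residual casework.
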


Throughout the article, we will often implicitly use the following basic fact.
\begin{lemma}
    Let $N$ be a positive integer and $E/K$ an elliptic curve. If $F/K$ is a finite extension of fields, then $[\rho_{E,N}(G_K):\rho_{E,N}(G_F)]$ divides $[F:K]$.
\end{lemma}

We remind the reader that possible torsion subgroups of CM elliptic curves are known by results from \cite{CMcase}, so unless stated otherwise we will assume our elliptic curves are non-CM in what follows.

\subsection{Isogeny graphs}

By \Cref{isog to rational j}, any non-CM $\Q$-curve over an odd degree number field $K$ is $K$-isogenous to an elliptic curve with rational $j$-invariant. In some of the proofs, we will use isogeny graphs of elliptic curves. 

Let $K$ be a number field and $E/K$ a non-CM elliptic curve. Consider the graph whose vertices are all elliptic curves over $K$ which are $K$-isogenous to $E$, and for each prime number $p$, two vertices $E_1$ and $E_2$ are connected by an edge with label $p$ if and only if there is a $p$-isogeny from $E_1$ to $E_2$ which is defined over $K$. 

This graph is called the \emph{$K$-isogeny graph} of an elliptic curve $E/K$. We denote it by $\mathcal G(E/K)$. For each prime number $p$, the subgraph of $\mathcal G(E/K)$ induced by vertices which are $p^k$-isogenous to $E$ for some $k\geq 0$ is called the $p$-primary component $\mathcal G_p(E/K)$ of $\mathcal G(E/K)$. For non-CM elliptic curves, $\mathcal G_p(E/K)$ is a tree, for a cycle in the graph would imply the existence of an endomorphism which is not a multiplication-by-$m$ map.

It is well-known that for an odd prime $p$, any elliptic curve $E$ over a number field $K$ has either $0, 1, 2$ or $p+1$ isogenies of degree $p$ defined over $K$. For $p=2$, $E$ has either $0$, $1$ or $3$ isogenies of degree $2$. Furthermore, the following is true.
\begin{lemma}\label{zetap}
    Let $p$ be a prime number. Let $E/K$ be an elliptic curve over a number field $K$ with $p+1$ isogenies of degree $p$ defined over $K$. Then $K$ contains the unique quadratic subfield of the cyclotomic field $\Q(\zeta_p)$. 
\end{lemma}
\begin{proof}
    If $E/K$ has all $p+1$ isogenies of degree $p$ defined over $K$, then $\rho_{E, p}(G_K)$ consists only of scalar matrices, since the action of the Galois group fixes every subgroup of order $p$ of $E[p]\cong (\Z/p\Z)^2$. However, the determinant of a scalar matrix is a square. From the Weil pairing, it follows that the unique quadratic subfield of $\Q(\zeta_p)$ is contained inside $K$.
\end{proof}

\begin{corollary}\label{onlylines}
    If $p$ is an odd prime, $K/\Q$ is a number field of odd degree and $E/K$ is an elliptic curve, the $p$-primary component $\mathcal G_p(E/K)$ is a line graph.
\end{corollary}
\begin{proof}
    The graph $\mathcal G_p(E/K)$ is a tree and hence connected, and \Cref{zetap} implies that any vertex has degree $0,1$ or $2$. The claim follows. 
\end{proof}

\section{Torsion over cubic number fields}\label{cubiccase}

The list of all possible torsion subgroups of eliptic curves over cubic fields is known.

\begin{theorem}\label{cubictorsion}\cite[Theorem A]{cubictorsion}
        Let $K/\Q$ be a cubic extension and $E/K$ be an elliptic curve. Then $E(K)_{tors}$ is isomorphic to one of the following $26$ groups:
        \begin{align*}
            &\Z/N_1\Z && \text{ with } N_1=1,\ldots,16,18,20,21, \\
            &\Z/2\Z\oplus \Z/2N_2\Z && \text{ with } N_2=1,\ldots, 7.
        \end{align*}
        There exist infinitely many $\overline{\Q}$-isomorphism classes for each such torsion subgroup except for $\Z/21\Z$, for which there is only one elliptic curve with this torsion, which is defined over $\Q(\zeta_9)^+$.
\end{theorem}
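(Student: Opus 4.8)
The plan is to recast this classification as a determination of the cubic points of modular curves, and to combine three ingredients: a uniform bound cutting the problem down to a finite explicit list of candidate torsion subgroups, an analysis of which candidates occur for infinitely many $\overline{\Q}$-isomorphism classes, and a ruling-out of all remaining candidates. An elliptic curve $E$ over a number field $K$ has a point of order $N$ defined over $K$ precisely when it gives rise to a non-cuspidal $K$-point of the modular curve $X_1(N)$; moreover, if $\Z/m\Z \oplus \Z/mn\Z \hookrightarrow E(K)$ then the Weil pairing forces $\Q(\zeta_m) \subseteq K$, so for $[K:\Q]=3$ one must have $m \in \{1,2\}$, leaving only the shapes $\Z/N\Z$ and $\Z/2\Z \oplus \Z/2N\Z$, the latter governed by $X_1(2,2N)$. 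First I would invoke Merel's uniform boundedness theorem together with its effective refinements (Oesterl\'e, and Parent's results specific to cubic fields): these show that no point of prime order exceeding $13$ occurs over a cubic field and bound the $p$-primary part of $E(K)_{tors}$ for each small prime $p$, so that only finitely many levels $N$ (for $X_1(N)$) and finitely many levels $2N$ (for $X_1(2,2N)$) survive, all of bounded genus.

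Second, I would show that each of the $26$ listed groups occurs for infinitely many $\overline{\Q}$-isomorphism classes. For $N\le 10$ and $N=12$ the curve $X_1(N)$ is rational; for the genus-one levels $N=11,14,15$ it is an elliptic curve, which already has infinitely many cubic points (fix one rational Weierstrass coordinate and let the other range over $\Q$, obtaining an infinite family of cubic fields over which the curve acquires a point, and on these points the non-constant function $j$ takes infinitely many values); for the genus-two levels $N=13,16,18$ one has $3=g+1$, so that a $\Q$-rational divisor class of degree $3$ moves in a pencil; and the remaining cases, $N=20$ and the full-torsion levels $X_1(2,2N)$ with $N\le 7$, are handled by exhibiting either an explicit trigonal map or a map of degree at most $3$, defined over $\Q$, to an elliptic curve of positive Mordell--Weil rank. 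This is essentially the content of the work of Jeon--Kim--Schweizer.

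Third, I would pin down the exceptional behaviour. Najman's elliptic curve of conductor $162$, which has $\Z/21\Z$-torsion over $\Q(\zeta_9)^+$, furnishes a non-cuspidal cubic point of $X_1(21)$, and it remains to prove that (i) its $\Gal(\overline{\Q}/\Q)$-orbit, corresponding to a single $\overline{\Q}$-isomorphism class of elliptic curve, is the \emph{only} non-cuspidal cubic point of $X_1(21)$, and (ii) \emph{no} torsion subgroup outside the list occurs, i.e.\ that $X_1(N)$ (resp.\ $X_1(2,2N)$) has no non-cuspidal cubic points for each surviving level not already accounted for --- the primes $17$ and $19$ being excluded by Parent's bound, the genuinely new cases being a short list of composite levels such as $N=22,25$ and product levels such as $X_1(2,16)$. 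The engine here is Kamienny's formal immersion argument in the form developed by Derickx--Kamienny--Stein--Stoll: at a level $N$ for which the modular Jacobian, or a well-chosen isogeny factor $A$ of it with $A(\Q)$ finite, has Mordell--Weil rank zero, one pushes a hypothetical non-cuspidal cubic point $x$ to $A$ via $x \mapsto [x]-3[\infty]$, checks by a rank computation on the $q$-expansions of a basis of regular differentials (equivalently, on Hecke operators acting on the relevant eigenforms) modulo a well-chosen prime of good reduction that the induced map $X_1(N)^{(3)} \to A$ is a formal immersion at $(\infty,\infty,\infty)$, and concludes that the reduction of $x$ is supported on cusps, whence $x$ itself is cuspidal --- a contradiction.

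The main obstacle is exactly this last step at the levels where the relevant Jacobian has \emph{positive} Mordell--Weil rank over $\Q$, or where the naive formal immersion criterion fails at every convenient prime. There one must either descend to a carefully chosen quotient abelian variety with finite Mordell--Weil group, impose the higher-order (``derivative'') formal-immersion conditions, or abandon this approach and argue on an explicit model: compute a plane or canonical model of $X_1(N)$ (the hardest levels have genus around $8$), determine $J_1(N)(\Q)$ by a combination of descent and $L$-function (analytic rank) computations, and run a Mordell--Weil sieve together with a symmetric Chabauty argument on $X_1(N)^{(3)}$ to enumerate its cubic points. Carrying out these rank verifications and explicit computations --- and the bookkeeping confirming that none of the finitely many product levels $X_1(2,2N)$ with $N\ge 8$ contributes a new group --- is the technical heart of the argument; the complete treatment is in \cite{cubictorsion}.
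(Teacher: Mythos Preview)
This theorem is not proved in the present paper: it is stated with a citation to \cite[Theorem A]{cubictorsion} and used as background input for the paper's main results on $\Q$-curves. There is therefore no ``paper's own proof'' to compare against.

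Your proposal is a reasonable high-level summary of the strategy actually carried out in \cite{cubictorsion} (and its predecessors): reduce to a finite list of levels via Merel/Oesterl\'e/Parent, invoke Jeon--Kim--Schweizer for the groups occurring infinitely often, and eliminate the remaining levels by rank-zero quotients, formal-immersion criteria, and explicit Chabauty/Mordell--Weil sieve computations on symmetric cubes. You also correctly flag the $\Z/21\Z$ case as the one sporadic exception requiring a direct determination of cubic points. Since you yourself conclude by deferring the technical heart to \cite{cubictorsion}, what you have written is best read as an extended proof sketch or roadmap rather than a self-contained proof; in the context of this paper, simply citing the result (as the authors do) is the appropriate move.
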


To prove \Cref{mainresult}, it remains to prove that none of the following six groups appears as torsion of a $\Q$-curve over a cubic field.

 \begin{align*}
            &\Z/N_1\Z && \text{ with } N_1=11,15,16,20 \\
            &\Z/2\Z\oplus \Z/2N_2\Z && \text{ with } N_2=5,6.
\end{align*}

\subsection{Eliminating $\Z/11\Z$} 
\begin{proposition}\label{eliminiraj11}
    The group $\Z/11\Z$ does not occur as a torsion subgroup of a $\Q$-curve over a cubic number field.
\end{proposition}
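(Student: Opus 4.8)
The plan is to argue by contradiction: suppose $E/K$ is a $\Q$-curve over a cubic field $K$ with a point of order $11$ defined over $K$. By the Cremona--Najman proposition, $E$ is $K$-isogenous (cyclically, by the remark) to an elliptic curve $E'$ with rational $j$-invariant, and after a quadratic twist we may take $E''/\Q$ with $j(E'')=j(E')$. By \Cref{izogenija i eigen+-1}, $E''$ then admits an $11$-isogeny defined over $K$, and every element of $\rho_{E'',11}(G_K)$ has eigenvalue $1$ or $-1$. The first step is to pin down which $E''/\Q$ can possibly have an $11$-isogeny over a cubic field. By \Cref{nadneparnimnistanovo} (the case $\ell=11$, which is not the listed exception), $E''$ must already have an $11$-isogeny defined over $\Q$; the rational points on $X_0(11)$ of genus $1$ are well-understood, and there are only finitely many $j$-invariants of rational elliptic curves with a rational $11$-isogeny (the curves $11a1$, $11a2$, $11a3$ and their twists), so $j(E'')$ lies in an explicit finite set.

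Next I would exploit the eigenvalue constraint together with the image of the mod $11$ representation. Because $E''$ has a rational $11$-isogeny, $\rho_{E'',11}(G_\Q)$ lies in a Borel subgroup, i.e.\ is conjugate to $\left(\begin{smallmatrix} a & * \\ 0 & b\end{smallmatrix}\right)$; writing the two characters as $\lambda$ and $\chi_{11}\lambda^{-1}$ on the diagonal (with $\lambda(G_\Q)\le(\Z/11\Z)^\times$), the known modular curves $X_0(11)$ / $X_1(11)$ analysis tells us exactly which pairs of diagonal characters occur for the finitely many $j$-invariants above. Restricting to $G_K$ with $[K:\Q]=3$ only changes these characters by composing with restriction; since $|(\Z/11\Z)^\times|=10$ is coprime to $3$, the restriction of $\lambda$ to $G_K$ has the same image as $\lambda$ itself, and likewise for $\chi_{11}$. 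Then the requirement that every matrix in $\rho_{E'',11}(G_K)$ have $1$ or $-1$ as an eigenvalue forces one of the two diagonal characters $\lambda|_{G_K}$ or $(\chi_{11}\lambda^{-1})|_{G_K}$ to be a quadratic (or trivial) character. I would check, for each of the finitely many relevant $j$-invariants, that this is impossible — concretely, that for the curves with a rational $11$-isogeny the diagonal characters have order too large (the full $X_1(11)$ would be needed to kill the eigenvalue-$1$ possibility, and a twist argument handles eigenvalue $-1$), so no cubic field can produce a point of order $11$ on such an $E''$.

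An alternative, and perhaps cleaner, route is purely via modular curves: a $\Q$-curve over $K$ with a $K$-rational point of order $11$ that is $K$-isogenous to something with rational $j$ corresponds to a $K$-point on a fiber product of $X_1(11)$ with $X_0(d)$ (the isogeny degree) lying over the $j$-line image of $X_0(\mathrm{stuff})$; one shows the relevant curve has gonality or Mordell--Weil rank forcing its cubic points to be non-existent or to reduce to the known rational-$j$ cases. Given the paper's stated strategy (``relying on known results about images of Galois representations of rational elliptic curves''), I expect the intended proof is the Galois-image one sketched above. The main obstacle, and the step I would spend the most care on, is the eigenvalue bookkeeping after restriction to $G_K$: one must be sure that passing from $G_\Q$ to the index-$3$ subgroup $G_K$ genuinely cannot shrink the diagonal characters enough to create an eigenvalue $\pm1$, which is where the coprimality of $3$ and $10$ and the explicit list of $11$-isogeny $j$-invariants do the real work — and one must separately handle whether $-1$ (as opposed to $1$) as an eigenvalue can arise, using \Cref{twistanje} to trade it against the eigenvalue-$1$ case on a twist.
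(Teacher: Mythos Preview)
Your approach is correct and matches the paper's: both reduce via \Cref{izogenija i eigen+-1} and \Cref{nadneparnimnistanovo} to a rational $E''$ with a rational $11$-isogeny, then use coprimality of $3$ with the relevant group order to conclude that restricting to $G_K$ cannot produce the required eigenvalue $\pm 1$ everywhere. The paper's execution is slightly more direct --- it computes $|\rho_{E'',11}(G_\Q)|=220$, notes $3\nmid 220$ so $\rho_{E'',11}(G_K)=\rho_{E'',11}(G_\Q)$, and exhibits a matrix with no eigenvalue $\pm 1$ --- rather than arguing via the diagonal characters; one small factual slip in your write-up is that the two non-CM $j$-invariants with a rational $11$-isogeny are $-11^2$ and $-11\cdot 131^3$ (the paper uses 1089.c1 and 1089.c2), whereas the Cremona class $11a$ you name is linked by $5$-isogenies, not $11$-isogenies.
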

\begin{proof}
Suppose that some $\Q$-curve $E$ over a cubic field $K$ has a point $P$ of order $11$.
We have the following situation: 
\begin{center}
\begin{tikzcd}
    E/K \arrow[r, rightarrow, "\phi"] & E'/K \arrow[r, rightarrow, "\sim"] & E''/\Q.
\end{tikzcd}
\end{center}
Here $E$ is our starting $\Q$-curve, $\phi$ is a cyclic isogeny defined over $K$, $E'/K$ is an eliptic curve with rational $j$-invariant, $E''$ is an elliptic curve defined over $\Q$ and $\sim$ is an isomorphism from $E'$ to $E''$, i.e. $E''$ is a quadratic twist of $E'$. 

By Corollary \ref{izogenija i eigen+-1}, we know that $E''$ has an $11$-isogeny over $K$. However, by Proposition \ref{nadneparnimnistanovo}, $E''$ also has an $11$-isogeny over $\Q$. 

There are two rational $j$-invariants for which the corresponding elliptic curve $E/\Q$ has an $11$-isogeny over $\Q$. For a fixed $j$-invariant, any two rational elliptic curves with this $j$-invariant are quadratic twists. This means that we can pick any rational elliptic curve with this $j$-invariant and check that its Galois image does not have the properties it would need to have by Corollary \ref{izogenija i eigen+-1}. 

The $j$-invariants for which there is an $11$-isogeny over $\Q$ are $-11^2$ and $-11\cdot 131^3$. Examples of elliptic curves with these $j$-invariants with LMFDB labels  \lmfdb{1089.c1} and \lmfdb{1089.c2}.

For $E''$ being any of these curves, the image of the mod 11 Galois representation $\rho_{E'', 11}(G_\Q)$ has $220$ elements. Since $3 \nmid 220$ and $[\rho_{E'',11}(G_\Q):\rho_{E'',11}(G_K)]$ divides $3$, we must have $\rho_{E'', 11}(G_K)=\rho_{E'', 11}(G_\Q)$. 

Directly inspecting the images $\rho_{E'', 11}(G_\Q)$, there are matrices which have neither $1$ nor $-1$ as an eigenvalue, so we reach a contradiction with Corollary \ref{izogenija i eigen+-1}. 
\end{proof}

\subsection{Eliminating $\Z/15\Z$} \begin{proposition}\label{eliminiraj15}
    The group $\Z/15\Z$ does not occur as a torsion subgroup of a $\Q$-curve over a cubic number field.
\end{proposition}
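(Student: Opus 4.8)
The plan is to follow the template of Proposition~\ref{eliminiraj11}, adjusted to the fact that $15=3\cdot 5$ is composite so no single prime does all the work. Suppose for contradiction that a $\Q$-curve $E$ over a cubic field $K$ has a point $P$ of order $15$, and fix, exactly as in the proof of Proposition~\ref{eliminiraj11}, a diagram $E/K\xrightarrow{\phi}E'/K\xrightarrow{\ \sim\ }E''/\Q$ with $\phi$ a cyclic $K$-isogeny, $E'$ of rational $j$-invariant, and $E''/\Q$ a quadratic twist of $E'$. Since $E$ has points of order $3$ and $5$ (namely $5P$ and $3P$), Corollary~\ref{izogenija i eigen+-1} gives that $E''$ has a $3$-isogeny and a $5$-isogeny over $K$; by Proposition~\ref{nadneparnimnistanovo} (the exceptional $\ell=7$ case does not arise) both are already defined over $\Q$, and the direct sum of a $G_\Q$-stable line in $E''[3]$ with one in $E''[5]$ is a $G_\Q$-stable cyclic subgroup of order $15$. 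Hence $E''$ has a rational cyclic $15$-isogeny, so $j(E'')$ is one of the finitely many $j$-invariants of non-cuspidal rational points of $X_0(15)$; I would list these (there are four) and fix a rational model $E_i/\Q$ for each $i=1,\dots,4$, so that every candidate for $E''$ is a quadratic twist of some $E_i$.

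Next I would isolate the usable constraint. Modulo $3$ there is none: since $\F_3^\times=\{1,-1\}$, every element of a Borel subgroup of $\GL_2(\F_3)$ automatically has an eigenvalue in $\{\pm1\}$, so Corollary~\ref{izogenija i eigen+-1} says nothing new for $p=3$, and all the content is modulo $5$. Applying Lemma~\ref{tocka-prostog-reda} to the point $3P$ of order $5$ and the isogeny $\phi$, and then twisting via Lemma~\ref{twistanje}, one gets that $\rho_{E'',5}(G_K)$ is conjugate into a Borel subgroup of $\GL_2(\F_5)$ one of whose two diagonal characters is $\{\pm1\}$-valued. Because $E''$ has a rational $5$-isogeny, $|\rho_{E'',5}(G_\Q)|$ divides the order $80$ of a Borel subgroup of $\GL_2(\F_5)$, which is coprime to $[K:\Q]=3$; hence $\rho_{E'',5}(G_K)=\rho_{E'',5}(G_\Q)$ and the same statement holds over $\Q$. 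Writing the mod-$5$ image of $E''$ in Borel form with diagonal characters $\lambda,\mu$ (so $\lambda\mu=\chi_5$), the constraint is that $\lambda$ or $\mu$ has order dividing $2$; as $\lambda^2$ is unchanged under quadratic twisting, this is a condition that can be tested on each fixed $E_i$.

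The final step is the verification: for each of the four curves $E_i$ one computes its mod-$5$ image (for instance in Magma, or from the known tables of mod-$\ell$ images of rational elliptic curves) and checks that both diagonal characters of the Borel have order exactly $4$ — equivalently, that no quadratic twist of $E_i$ acquires a rational point of order $5$. Together with the previous paragraph this is the desired contradiction, and it finishes the proof.

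The main obstacle is precisely this verification. If, for some of the four $j$-invariants, one of the mod-$5$ diagonal characters has order at most $2$, the argument above no longer produces a contradiction, and one has to fall back on the sharper information coming from Corollary~\ref{slozeni}: with $\alpha$ the order of $\phi(P)$, the image $\rho_{E'',15}(G_K)$ is conjugate into the group of matrices $\sm{\pm 1 & \frac{15}{\alpha}u\\ \alpha v & z}$, it has index dividing $3$ in the explicitly determined Borel-type group $\rho_{E'',15}(G_\Q)$, and one checks by a finite group computation that no such subgroup can occur as the mod-$15$ image over a cubic field. The degenerate possibility $\alpha=1$, where $\phi$ kills all of $\langle P\rangle$, should be recorded separately, but it leads to the same Borel constraint mod $5$ as above.
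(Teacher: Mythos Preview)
Your primary mod-$5$ approach cannot succeed, and this is not a ``might fail'' situation: it fails for all four $j$-invariants. The four curves with a rational $15$-isogeny form a single $\Q$-isogeny class (e.g.\ the class 50a on LMFDB), and one of them---50a3---has a rational point of order~$5$, so one of its mod-$5$ Borel diagonal characters is trivial. Since $3$-isogenies do not touch the mod-$5$ image and a $5$-isogeny merely swaps the two diagonal characters, every curve in the class has $\{\lambda,\mu\}=\{1,\chi_5\}$ as the multiset of diagonal characters; in particular one of them always has order~$\le 2$. Your twist-invariant test ``both diagonal characters have order exactly~$4$'' therefore never holds, and no contradiction arises from mod~$5$ alone.

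The reason is that you discarded the mod-$3$ information too quickly. You are right that mod~$3$ by itself is vacuous (every Borel element already has an eigenvalue in $\{\pm1\}$), but the twist character $\psi$ is the \emph{same} character modulo~$3$ and modulo~$5$. Hence the correct constraint on $\rho_{E'',15}(G_K)$ is the correlated one: for each $\sigma$, either $\rho_{E'',15}(\sigma)$ has eigenvalue~$1$ both mod~$3$ and mod~$5$, or it has eigenvalue~$-1$ both mod~$3$ and mod~$5$. This joint condition is strictly stronger than the conjunction of the two separate conditions, and it is exactly what the paper checks. Concretely, the paper picks the four curves in the isogeny class 1600.i (chosen so that $-I$ lies in each mod-$15$ image, making the image twist-invariant), lists all index-$1$ or index-$3$ subgroups of $\rho_{E'',15}(G_\Q)$, and exhibits in each a matrix violating the correlated $\pm1$-eigenvalue condition. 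Your fallback via Corollary~\ref{slozeni} would also work, but the parameter~$\alpha$ is unnecessary here: the correlated eigenvalue constraint already follows for every~$\alpha$, so there is only one condition to test, not four.
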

\begin{proof}
Suppose that some $\Q$-curve $E$ over a cubic field $K$ has a point $P$ of order $15$.
We have the following situation: 
\begin{center}
\begin{tikzcd}
    E/K \arrow[r, rightarrow, "\phi"] & E'/K \arrow[r, rightarrow, "\sim"] & E''/\Q.
\end{tikzcd}
\end{center}
Here $E'$ is an elliptic curve over $K$ with rational $j$-invariant, and $E''$ is a quadratic twist of $E'$ which is defined over $\Q$. 

Then, similarly as in the proof of Proposition \ref{eliminiraj11}, we can conclude that $E''$ has a $3$-isogeny and a $5$-isogeny over $\Q$, so it must have a $15$-isogeny over $\Q$. There are four possible $j$-invariants for which this happens.

As in the proof of Proposition \ref{eliminiraj11}, we can take one specific elliptic curve for each of the $j$-invariants.

We will look at curves from the isogeny class \lmfdb{1600.i} on LMFDB. Note that $-I$ is contained in the mod $15$ image.

By \Cref{twistanje}, we have $\rho_{E', 15} \sim \psi \cdot \rho_{E'', 15}$, where $\psi$ is some quadratic character. Furthermore, any element of $\rho_{E', 15}(G_K)$ has $1$ as an eigenvalue when reduced modulo $3$ and modulo $5$. This means that any element of $\rho_{E'', 15}(G_K)$ either has $1$ as an eigenvalue modulo $3$ and modulo $5$, or it has $-1$ as an eigenvalue modulo $3$ and modulo $5$. Now, for each of the four curves $E''$ from the class 1600.i, we do the following:

\begin{itemize}
    \item List all index $3$ subgroups $H$ of $\rho_{E'', 15}(G_\Q)$.
    \item In each of the subgroups $H$ from the list, find a matrix $A$ such that at least one matrix among $A \pmod 3$,  $A \pmod 5$ doesn't have $1$ as an eigenvalue, and at least one matrix among $A \pmod 3$, $A \pmod 5$ doesn't have $-1$ as an eigenvalue. 
\end{itemize}

The existence a matrix $A \in H$ with this property implies that $H$ cannot be the image $\rho_{E'', 15}(G_K)$.

\begin{enumerate}
    \item \textbf{Case 1600.i1} There are three index $3$ subgroups, so we have three cases. They are listed in order in which they appear in the auxiliary Magma program.
    \begin{enumerate}
        \item The matrices $\sm{2 & 5 \\ 0 & 1}$ and $\sm{1 & 10 \\ 0 & 11}$ are in the image $\rho_{E'', 15}(G_K)$. They both have to be in $\rho_{E', 15}(G_K)$ as their negatives don't have $1$ as an eigenvalue mod $5$. However, their product doesn't have $1$ as an eigenvalue mod $3$. 
        \item The matrices $\sm{12 & 1 \\ 10 & 6}$ and $\sm{6 & 5 \\ 5 & 6}$ are in $\rho_{E'', 15}(G_K)$. They don't have $-1$ as eigenvalues mod $5$, so they themselves have to be in $\rho_{E', 15}(G_K)$. However, their product equals $-I$ mod $3$, which doesn't have $1$ as an eigenvalue mod $3$, so we reach a contradiction. 
        \item Similarly as in the previous case, we end up with $-I$ in $\rho_{E', 15}(G_K)$, which is a contradiction.
    \end{enumerate}
    
    \item \textbf{Case 1600.i2}
    Each of the index $3$ subgroups contains at least one among $\sm{2 & 0 \\ 0 & 11}$ and $\sm{2 & 9 \\ 0 & 11}$. 
    \item \textbf{Case 1600.i3} Each of the index $3$ subgroups contains a matrix of the form $\sm{11 & * \\ 0 & 2}$.
    \item \textbf{Case 1600.i4} The matrix $\sm{11 & 0 \\ 0 & 11}$ is in all index $3$ subgroups.
\end{enumerate}
\end{proof}

\subsection{Eliminating $\Z/16\Z$} 
\begin{proposition}\label{eliminiraj16}
    The group $\Z/16\Z$ does not occur as a torsion subgroup of a $\Q$-curve over a cubic number field.
\end{proposition}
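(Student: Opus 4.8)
The plan is to reproduce the strategy of Propositions~\ref{eliminiraj11} and~\ref{eliminiraj15}, but phrased in terms of the mod $16$ image, since the relevant modular curves now have genus $0$ and no finite list of $j$-invariants is available. Suppose for contradiction that a $\Q$-curve $E$ over a cubic field $K$ has a point $P$ of order $16$, and set up the usual diagram $E/K \xrightarrow{\phi} E'/K \xrightarrow{\sim} E''/\Q$ with $\phi$ a cyclic $K$-isogeny, $E'$ of rational $j$-invariant, and $E''$ a quadratic twist of $E'$ defined over $\Q$. Let $\alpha \mid 16$ be the order of $\phi(P)$ in $E'$. Since $\langle P\rangle$ is fixed pointwise by $G_K$, \Cref{izogenije-generalno} (equivalently \Cref{slozeni}) shows that $\rho_{E',16}(G_K)$ is conjugate into the subgroup $G_\alpha \leq \GL_2(\Z/16\Z)$ of matrices $\sm{1 & \frac{16}{\alpha}u \\ \alpha v & z}$, and one checks directly that every element of $G_\alpha$ has $1$ as an eigenvalue. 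By \Cref{twistanje}, $\rho_{E'',16}(G_K)$ is then conjugate into $\pm G_\alpha$, so every element of $\rho_{E'',16}(G_K)$ has $1$ or $-1$ as an eigenvalue. Finally, \Cref{izogenije-generalno} also records that $E'$ carries a $K$-rational point of order $\alpha$ together with an independent $K$-rational cyclic subgroup of order $16/\alpha$ on which $G_K$ acts by the cyclotomic character.

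Now I would exploit that $E''$ is defined over $\Q$. Up to conjugacy and quadratic twist, $\rho_{E'',16}(G_\Q)$ is one of the finitely many subgroups of $\GL_2(\Z/16\Z)$ realized by elliptic curves over $\Q$, available from the classification of $2$-adic Galois images over $\Q$ of Rouse--Sutherland--Zureick-Brown. Since $[K:\Q]=3$, the subgroup $\rho_{E'',16}(G_K)$ has index $1$ or $3$ in $\rho_{E'',16}(G_\Q)$. The contradiction will come from a finite Magma computation: for each candidate image $G$ (enlarged by $-I$, which is harmless because containment in $\pm G_\alpha$ is insensitive to quadratic twisting), enumerate the subgroups of index at most $3$, and for each such subgroup $H$ exhibit a matrix with neither $1$ nor $-1$ as an eigenvalue --- exactly as the explicit matrices in the proof of \Cref{eliminiraj15} do. In the cases, if any, where some $H$ lies inside a $\pm G_\alpha$, I would bring in the additional structure recorded above (the $K$-rational point of order $\alpha$ and the independent $16/\alpha$-subgroup with cyclotomic action) to rule it out, or, failing that, repeat the argument one level higher at $32$.

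The principal obstacle, and what makes this case heavier than $\Z/11\Z$ and $\Z/15\Z$, is precisely the loss of a finite list of $j$-invariants: $X_0(16)$ has genus $0$ and $X_1(16)$ has infinitely many cubic points, so one cannot simply name a few curves $E''$ and inspect their images. The replacement is the finiteness of the $2$-adic image classification over $\Q$, but using it correctly requires care on two fronts. First, the quadratic twist relating $E'$ and $E''$ shifts Galois images by a character, so one must work with the twist-stable group $\langle \rho_{E'',16}(G_\Q),-I\rangle$ rather than with $\rho_{E'',16}(G_\Q)$ itself. Second, the restriction from $G_\Q$ to $G_K$ may be a proper index-$3$ subgroup, so it is not enough to inspect the candidate images themselves: every index-$3$ subgroup of every candidate has to be checked, just as index-$3$ subgroups were enumerated in \Cref{eliminiraj15}. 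I expect the mod $16$ data to be sufficient, but if it is not, passing to level $32$ (applying \Cref{slozeni} with $N=32$ after replacing $E$ by a suitable $2$-isogenous curve, or detecting the order-$16$ point against the twisting character) is the natural fallback.
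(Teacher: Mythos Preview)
Your plan follows the template of \Cref{eliminiraj11} and \Cref{eliminiraj15}, but the paper takes a structurally different route that avoids enumerating index-$3$ subgroups altogether. After applying \Cref{slozeni} and twisting, the paper first argues that in fact $\rho_{E'',16}(G_K)=\rho_{E'',16}(G_\Q)$: the shape of the image forces $E''$ to carry a $4$-isogeny (and for $s\neq 2$ even an $8$-isogeny) over $K$, and by \cite[Proposition~3.2]{ja} such an isogeny either already descends to $\Q$---whence the mod-$2$ image is Borel, hence of order coprime to $3$, so no index-$3$ drop can occur---or the full $2$-adic image is the single exceptional group $H_{20}$ from \cite{rouse-zureickbrown-2adic}. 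That group is defined modulo $4$, so any index-$3$ subgroup over $K$ still contains every matrix congruent to $I\pmod 4$, which is visibly not contained in any $\pm G_\alpha$. With the index-$3$ possibility eliminated, the final step is a search only over the full $2$-adic images from \cite{rouse-zureickbrown-2adic}, further restricted to those not defined modulo $8$, checking conjugacy into the group of matrices $\sm{*&2^{4-s}u\\2^sv&\pm 1}$ directly; none survive.

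Your brute-force variant could also be made to work, but be aware that the eigenvalue filter is genuinely weaker over $\Z/16\Z$ than over the prime fields used in \Cref{eliminiraj15}: for instance every matrix $A\equiv I\pmod 4$ satisfies $\det(A-I)\equiv 0\pmod{16}$, so ``has $1$ as an eigenvalue'' in that sense, yet the group of all such matrices is not conjugate into any $G_\alpha$. So the step ``exhibit a matrix with neither $1$ nor $-1$ as an eigenvalue'' may well fail on exactly the index-$3$ subgroups coming from $H_{20}$, even though those subgroups are not conjugate into any $\pm G_\alpha$. The correct test is the direct containment check, which is what the paper runs; if you use that rather than the eigenvalue filter, the level-$32$ escalation you anticipate is unnecessary. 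What the paper's argument buys over yours is that the external input from \cite{ja} collapses the whole index-$3$ enumeration to a single exceptional image handled by a one-line observation, leaving a much smaller residual search.
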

\begin{proof}
    Suppose  that some $\Q$-curve $E$ over a cubic field $K$ has a point $P$ of order $16$. Then $j(E)$ is not rational. 
    
    We know that $E$ is $K$-isogenous to an elliptic curve $E'$ with rational $j$-invariant. Since composing by an isogeny of odd degree does not effect $16$-torsion, we may assume that the isogeny $\phi: E \to E'$ has degree $2^k$ for some positive integer $k$. Then $k\leq 4$ by \cite[Theorem 1.2]{ja}. Suppose that $k>2$. Consider a quadratic twist $E''$ of $E'$ such that $E''$ is defined over $\Q$. Then  $E''$ is $2^k$-isogenous to a quadratic twist $E'''$ of $E$. However, by \cite[Proposition 3.2]{ja}, it follows that for $k>2$, any $2^k$-isogeny of a rational elliptic curve which is defined over a cubic extension is in fact defined over $\Q$. In particular, $E'''$ is also defined over $\Q$, so $j(E''')=j(E)$ is rational, a contradiction.

    Thus, $k\leq 2$. Let $P$ be a $K$-rational point of order $16$ on $E$. We claim that the $K$-rational isogeny $\psi:E \to E_2$ with kernel $\langle P \rangle$ factors through the $2^k$-isogeny $\phi: E \to E'$. Suppose for the sake of contradiction that it does not. If $k=1$, then the dual isogeny $\hat{\phi}:E' \to E$ composed with $\psi: E \to E_2$ yields a cyclic $32$-isogeny of $E'$, a contradiction with \cite[Theorem 1.2]{ja}. If $k=2$ and the isogenies $\phi$ and $ \psi$ are independent, then $E'$ has a $4\cdot 16$-isogeny, a contradiction. If $k=2$ and $\phi$ and $\psi$ are not independent, then $\ker \phi \cap \ker \psi$ has size $2$. We have the following diagram, where each of the edges denotes a $2$-isogeny.
    \[\begin{tikzcd}
	E & E_0 & {E'} \\
	& \bullet \\
	& \bullet \\
	& {E_2}
	\arrow[no head, from=1-1, to=1-2]
	\arrow[no head, from=1-2, to=1-3]
	\arrow[no head, from=1-2, to=2-2]
	\arrow[no head, from=2-2, to=3-2]
	\arrow[no head, from=3-2, to=4-2]
\end{tikzcd}\]
    Since $E'$ and $E_2$ are $16$-isogenous, it follows using the same argument as before that $j(E_2)$ is rational. However, then all the elliptic curves on the path have rational $j$-invariant. Consider a rational quadratic twist $E_0'$ of $E_0$. Then $E_0'$ has two rational $2$-isogenies, so the third of its $2$-isogenies must also be rational. This implies that some twist of $E$ is a rational elliptic curve and hence $E$ has rational $j$-invariant, a contradiction. Thus, $\psi: E \to E_2$ factors through $\phi: E \to E'$. 

    If $k=1$, then there is an $8$-isogeny $E' \to E_2$, so $E_2$ has rational $j$-invariant. However, then $E_2 \to E$ is a $16$-isogeny, so $E$ has rational $j$-invariant, a contradiction. Thus, $k=2$ and we have the following diagram.
    \[\begin{tikzcd}
	E & \bullet & {E'} & \bullet & {E_2}
	\arrow[no head, from=1-1, to=1-2]
	\arrow[no head, from=1-2, to=1-3]
	\arrow[no head, from=1-3, to=1-4]
	\arrow[no head, from=1-4, to=1-5]
\end{tikzcd}\]
By \Cref{slozeni}, it follows that $\rho_{E', 16}(G_K)$ is conjugate to a subgroup of the group of all matrices of the form $\sm{1 & 4x \\ 4y & z}$. Let $E''$  be a quadratic twist of $E'$ which is defined over $\Q$. Then $\rho_{E', 16}(G_K)$ is conjugate to a subgroup of the group of all matrices of the form $\sm{\pm 1 & 4x \\ 4y & z}$.

We see that  $E''$ has a $4$-isogeny over $K$ which is not defined over $\Q$. By \cite[Proposition 3.2]{ja}, it follows that the image of the $2$-adic Galois representation $\rho_{E'', 2^\infty}(G_\Q)$ is conjugate to $H_{20}$, where $H_{20}$ is the corresponding group from the list in \cite{rouse-zureickbrown-2adic}. The group $\rho_{E'', 2^\infty}(G_K)$ is its index $3$ subgroup (it cannot be equal to it, as there would be no $2$-isogenies over $K$). 

However, $H_{20}$ is defined modulo $4$. This implies that $\rho_{E'', 4}(G_K)$ is conjugate to an index $3$ subgroup of the reduction modulo $4$ of $H_{20}$, and hence $\rho_{E'', 2^{\infty}}(G_K)$ is also defined modulo $4$. But then $\rho_{E'', 16}(G_K)$ contains all matrices of the form $I \pmod 4$, which is a contradiction since the matrix $\sm{5 & 0 \\ 0 & 5}$ is not conjugate to a matrix of the form $\sm{\pm 1 & 4x \\ 4y & z}$.
\end{proof}  
\subsection{Eliminating $\Z/2\Z \times \Z/10\Z$ and $\Z/20\Z$}

\begin{proposition}\label{eliminiraj2x10}
    The groups $\Z/2\Z \times \Z/10\Z$ does not occur as a torsion subgroup of $\Q$-curves over cubic number fields.
\end{proposition}
\begin{proof}
    Suppose that $\Z/2\Z \times \Z/10\Z$ does occur for some $\Q$-curve $E$ over a number field $K$. We first use \Cref{skok-s-izogenijom} with $M=2$, $N=5$ to conclude that there is a $2$-isogenous curve $E_1$ such that $\rho_{E_1, 20}(G_K)$ is conjugate to a subgroup of the group of all matrices of the form $\sm{h & * \\
    0  & * }$, where $h \in\{1,11\}$. We see that $E_1$ has a $20$-isogeny $\phi: E_1 \to E_2$ and hence does not have rational $j$-invariant by \cite[Theorem 1.2]{ja}. The curve $E_1$ is also $K$-isogenous to an elliptic curve $E'/K$ with a rational $j$-invariant. Let $\psi: E_1 \to E'$ denote this isogeny.  As before, we may assume $\psi$ is cyclic of degree $2^a\cdot 5^b$ for some $a,b \geq 0$.  

    Since $E'$ has a $5$-isogeny over $K$, it does not have a $4$-isogeny. Hence the degree of $\psi: E_1 \to E'$ is divisible by $2$ and is not divisible by $4$, so it equals $2$ or $10$. It cannot be $50$ by \cite[Theorem 1.2]{ja}.

    Suppose that the degree is $10$. We have the following diagram, with the labels above edges being the degrees. \[\begin{tikzcd}
	{E_1} & {E_3} & {E'}
	\arrow["2", no head, from=1-1, to=1-2]
	\arrow["5", no head, from=1-2, to=1-3]
\end{tikzcd}\]
We claim that $E_3$ also has rational $j$-invariant.  Consider a quadratic twist $E''$ of $E'$ which is defined over $\Q$. By \Cref{nadneparnimnistanovo}, $E''$ has a rational $5$-isogeny. If this $5$-isogeny is not to a twist of $E_3$, then this $5$-isogenous rational elliptic curve has a $50$-isogeny over $K$ to a twist of $E_1$, a contradiction. Thus, $E_3$ is a twist of a rational elliptic curve and hence has rational $j$-invariant. 

Hence, we may assume that the degree of $\psi$ is $2$. If $\psi$ does not factor through $\phi$, then $\psi$ and $\phi$ are independent and we would obtain a $40$-isogeny of $E'$, a contradiction. Thus, $\phi$ factors through $\psi$. However, then $E'$ is isomorphic to the starting curve $E$, since the $20$-isogeny $E_1 \to E_2$ factors through both $E_1 \to E$ and $E_1 \to E'$, and they are both of degree $2$. But then $E$ has rational $j$-invariant, a contradiction.
\end{proof}

\begin{proposition}\label{eliminiraj20}
    The group $\Z/20\Z$ does not occur as a torsion subgroup of a $\Q$-curve over a cubic number field.
\end{proposition}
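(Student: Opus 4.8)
The plan is to run the same argument as in the proof of Proposition~\ref{eliminiraj2x10}, the point being that a $K$-rational point of order $20$ forces a strictly more restrictive shape on the relevant Galois image than the configuration analysed there, so that the case $\Z/20\Z$ is subsumed by it. Suppose for contradiction that some non-CM $\Q$-curve $E$ over a cubic field $K$ has a $K$-rational point $P$ of order $20$, and fix (as in the earlier proofs) a chain $E \to E' \to E''$ in which the first map is a cyclic $K$-isogeny $\phi$, $E'/K$ has rational $j$-invariant, and $E''/\Q$ is a quadratic twist of $E'$. Applying \Cref{slozeni} with $N=20$ produces a divisor $\alpha$ of $20$ such that $\rho_{E',20}(G_K)$ is conjugate to a subgroup of the matrices $\sm{1 & \frac{20}{\alpha}u \\ \alpha v & z}$, and \Cref{twistanje} then shows $\rho_{E'',20}(G_K)$ is conjugate to a subgroup of the matrices $\sm{h & \frac{20}{\alpha}u \\ \alpha v & z}$ with $h\in\{1,19\}$. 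Since $\{1,19\}\subseteq\{1,9,11,19\}$, this is precisely the situation reached partway through the proof of Proposition~\ref{eliminiraj2x10}.

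From here I would dispatch the two cases just as there. If $\alpha\in\{1,4,5,20\}$ then $E''$ has a $20$-isogeny defined over $K$, which was already shown to be impossible in the proof of Proposition~\ref{eliminiraj2x10}. If $\alpha\in\{2,10\}$ then reducing the displayed matrix shape modulo $5$ makes it triangular, so $E''$ has a $5$-isogeny over $K$ and hence over $\Q$ by \Cref{nadneparnimnistanovo}; the brute-force search over $2$-adic Galois images carried out in the proof of Proposition~\ref{eliminiraj2x10} --- all of whose input hypotheses (containment in the matrix group $J$ or its transpose, index dividing $3$, a rational $5$-isogeny, surjective determinant, $-I$ in the image, and mod-$4$ admissibility) are satisfied verbatim here --- then leaves only the twelve subgroups contained in $B_{12}$, respectively $B_{12}^t$. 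These force $j(E'')$ to be of the form $x^2+1728$, which is incompatible with the parametrization $j=(t^2+10t+5)^3/t$ of elliptic curves with a rational $5$-isogeny, since the genus-$1$ curve $C\colon x^2+1728=(t^2+10t+5)^3/t$ has no rational points. This contradiction finishes the proof.

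I do not expect a genuine obstacle; the work is essentially bookkeeping --- verifying that every hypothesis used in the finite search of Proposition~\ref{eliminiraj2x10} is inherited by the $\Z/20\Z$ configuration. The two points I would be careful about are that $E''$ really does acquire a $5$-isogeny over $K$ when $\alpha\in\{2,10\}$ (immediate, since the matrix becomes triangular modulo $5$) and the harmless reduction to $-I\in\rho_{E'',20}(G_K)$, handled exactly as in Proposition~\ref{eliminiraj2x10}. If a self-contained argument were preferred, I would instead re-run the same search with the tighter constraint $h\in\{1,19\}$: since the set of allowed matrices only shrinks, the set of surviving $2$-adic images can only shrink, and the final contradiction via the curve $C$ applies unchanged.
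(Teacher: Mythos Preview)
Your proposal is correct and follows essentially the same approach as the paper. The paper's own proof is extremely terse: it simply observes that a $K$-rational point of order $20$ forces $\rho_{E,20}(G_K)$ into the shape $\sm{h & * \\ 0 & *}$ with $h\in\{1,11\}$ (in fact $h=1$), which is precisely the constraint satisfied by the auxiliary curve $E_1$ in the proof of Proposition~\ref{eliminiraj2x10}, and then invokes that proof wholesale; you re-run that argument explicitly, arriving at the tighter constraint $h\in\{1,19\}\subseteq\{1,9,11,19\}$ after twisting, and correctly note that the brute-force search and the final contradiction via the genus-$1$ curve $C$ carry over unchanged.
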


\begin{proof}
Suppose that the group $\Z/20\Z$ does occur as a torsion subgroup of a $\Q$-curve $E/K$, where $K$ is a cubic field. Then there exists a $2$-isogenous elliptic curve over $K$ with torsion $\Z/2 \Z\times \Z/10\Z$, and we already proved that this is impossible.
\end{proof}

\subsection{Eliminating $\Z/2\Z \times \Z/12\Z$}
\begin{proposition}\label{eliminiraj2x12}
    The group $\Z/2\Z \times \Z/12\Z$ does not occur as a torsion subgroup of a $\Q$-curve over a cubic number field.
\end{proposition}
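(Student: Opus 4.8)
The plan is to mirror the structure of the proofs eliminating $\Z/15\Z$ and $\Z/16\Z$, reducing to a finite check involving $2$-adic images and the mod $3$ picture. Suppose a $\Q$-curve $E/K$ over a cubic field $K$ has torsion containing $\Z/2\Z\times\Z/12\Z$. Since $E$ is $K$-isogenous (cyclically) to some $E'/K$ with rational $j$-invariant, and $E'$ is a quadratic twist of some $E''/\Q$, I would first use \Cref{skok-s-izogenijom} with $M=2$, $N=3$ to replace $E$ by a $2$-isogenous curve $E_1/K$ whose mod $12$ image lies in matrices of the form $\sm{h & * \\ 0 & *}$ with $h\in\{1,7\}\leq(\Z/12\Z)^\times$ (using the point of order $12$ together with the full $2$-torsion to get a point $P$ with $[2]P$, $[6]Q$ rational). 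Then apply \Cref{slozeni} (with $H=\{1,7\}$) to the cyclic isogeny $E_1\to E'$: there is $\alpha\mid 12$ such that $\rho_{E',12}(G_K)$ is conjugate into $\sm{h & (12/\alpha)u \\ \alpha v & z}$ with $h\in\{1,7\}$, and twisting to $E''/\Q$ gives the same shape with $h$ in the group generated by $\{1,7\}$ and $-1$ in $(\Z/12\Z)^\times$, i.e. $h\in\{1,5,7,11\}=(\Z/12\Z)^\times$ — so the constraint on $h$ disappears and the real content is the divisibility pattern of the off-diagonal entries.

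Next I would split on $\alpha$. If $\alpha\in\{1,3,4,12\}$ then $E''$ has a $12$-isogeny over $K$; I would rule this out by combining the fact that $E''$ must then have both a $3$-isogeny and a $4$-isogeny over $K$, invoking \Cref{nadneparnimnistanovo} to push the $3$-isogeny down to $\Q$ (noting $12\ne 7$, so no exceptional $j$-invariant) and \cite[Proposition 3.2.]{ja} to push a $4$- or $8$-isogeny down to $\Q$ where possible — and in the residual sub-case where the $4$-isogeny genuinely only appears over $K$, use the Rouse–Zureick-Brown classification exactly as in the proof of \Cref{eliminiraj16} to see that $\rho_{E'',2^\infty}(G_K)$ would have to contain all matrices $\equiv I\pmod 4$, contradicting the $4$-isogeny hypothesis. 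So we are left with $\alpha\in\{2,6\}$, and in both cases $E''$ has a $3$-isogeny over $K$, hence over $\Q$ by \Cref{nadneparnimnistanovo}.

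For $\alpha=2$ (the case $\alpha=6$ being handled by transposing, as in \Cref{eliminiraj2x10}, since all the conditions are transpose-invariant), I would run the same brute-force search as in \Cref{eliminiraj2x10}: let $A=\rho_{E'',12}(G_K)$, $B=\rho_{E'',12}(G_\Q)$, and enumerate subgroups $B\leq\GL_2(\Z/12\Z)$ admitting $A$ conjugate into the group $J$ of matrices $\sm{h & 6u \\ 2v & z}$, with $A$ of index $3$ in $B$ (or $A=B$ if $3\nmid|B|$), full determinant, $-I\in A\cap B$, $B\bmod 3$ inside a Borel, and $B\bmod 4$ an admissible mod $4$ image from \cite{rouse-zureickbrown-2adic}. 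The expected outcome is that every surviving $B$ is contained in one maximal candidate; analyzing its reduction mod $4$ pins down the $2$-adic image to one (or a few) of the groups from \cite{rouse-zureickbrown-2adic}, which are parametrized by an explicit $j$-family, while the mod $3$ condition forces $E''$ onto the $X_0(3)$ $j$-line (or its twist), and I would exhibit the resulting modular curve as a genus $\le 1$ curve with no relevant rational points, or directly rule out the finitely many sporadic $j$-invariants. The main obstacle I anticipate is precisely this last step: unlike the $\Z/20\Z$ case, where the $2$-adic constraint was strong enough that $j=x^2+1728$ appeared and a clean genus-$1$ intersection with the $X_0(5)$ parametrization finished things, here the mod $4$ constraint coming from a mere $4$-isogeny over $K$ (rather than a $5$-isogeny) is weaker, so the candidate set for $B$ may be larger and the resulting fiber product of modular curves may have genus $\geq 2$ or positive-rank Jacobian, in which case I would need to either strengthen the $2$-adic input (e.g. by also using that the original point had order exactly $12$, forcing more structure mod $4$ via \Cref{izogenije-generalno}) or fall back on a case-by-case elimination of index-$3$ subgroups as in the $\Z/15\Z$ proof. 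I expect that tracking the mod $4$ data carefully — using that $\rho_{E'',2^\infty}(G_\Q)$ is not defined modulo $2$ here, since $E''$ carries a genuine $4$-isogeny, yet must become "small" over the cubic $K$ — will cut the candidate list down to a manageable size for the Magma-assisted search.
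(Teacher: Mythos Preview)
Your proposal has a genuine gap, and it stems from a single choice: you apply \Cref{skok-s-izogenijom} with $M=2$, $N=3$ and hence work modulo $12$, whereas the paper takes $M=2$, $N=6$ and works modulo $24$. This loses exactly the $2$-adic information that makes the argument close.

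Concretely, in your case split you claim that $\alpha\in\{1,3,4,12\}$ can be ``ruled out'' because $E''$ acquires a $12$-isogeny over $K$. But a rational elliptic curve with a $12$-isogeny is perfectly possible; pushing the $3$- and $4$-isogenies down to $\Q$ (which you correctly do) yields no contradiction. You never say what would contradict it, and nothing does. Likewise, for $\alpha\in\{2,6\}$ the mod-$12$ shape $\sm{h & 6u\\ 2v & z}$ only tells you that $E''$ has full $2$-torsion over $K$ together with a $3$-isogeny---you correctly note that after twisting the constraint on $h$ evaporates since $\{1,7\}\cdot\{\pm1\}=(\Z/12\Z)^\times$---and this is too weak to finish without the brute-force search you yourself flag as uncertain.

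The paper's fix is simply to keep the full strength of the torsion hypothesis: the point of order $12$ (not just $6$) lets you take $N=6$, so $\rho_{E_1,24}(G_K)$ lands in $\sm{h & *\\0 & *}$ with $h\in\{1,13\}$. After \Cref{izogenije-generalno} and twisting, the boundary cases $\alpha\in\{1,3,8,24\}$ give a $24$-isogeny on $E''$ over $K$, which \emph{is} forbidden by \cite{ja}. In the remaining cases $\alpha\in\{2,4,6,12\}$ the mod-$8$ picture always shows a $4$-isogeny and an \emph{independent} $2$-isogeny (not merely two $2$-isogenies). Pushing the $3$-isogeny down via \Cref{nadneparnimnistanovo}, and the $4$-isogeny down via \cite[Proposition 3.2]{ja} (the $H_{20}$ exception being incompatible with a rational $3$-isogeny), one gets on $E''/\Q$ a rational $12$-isogeny together with an independent rational $2$-isogeny; one more application of \Cref{skok-s-izogenijom} then produces a rational elliptic curve with a $24$-isogeny, a contradiction. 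No computer search is needed.
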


\begin{proof}
    Suppose that this group does occur for some $\Q$-curve $E$ over a number field $K$. We first use \Cref{skok-s-izogenijom} with $M=2$, $N=6$ to conclude that there is a $2$-isogenous curve $E_1$ such that $\rho_{E_1, 24}(G_K)$ consists (in a suitable basis) of matrices of the form $\sm{h & * \\
    0  & * }$, where $h \in\{1,13\}$. In particular, $E_1$ has a $K$-rational $24$-isogeny. The curve $E_1$ is also $K$-isogenous to an elliptic curve $E'$ with a rational $j$-invariant.  As before, we may assume that this isogeny is cyclic of degree $2^a \cdot 3^b$ for $a,b\geq 0$. Let us factor this isogeny as $E_1 \to E_2$ of degree $3^b$ and $E_2 \to E'$ of degree $2^a$. If $E_2$ has rational $j$-invariant, then $E_2$ has a $3$-isogeny and an $8$-isogeny over $K$, so it has a $24$-isogeny as well. However, this is not a valid isogeny degree over cubic fields by \cite[Theorem 1.2]{ja}. Thus, $E_2$ does not have rational $j$-invariant and hence $a>0$. If $a>2$, then $E'$ has an $8$-isogeny and a $3$-isogeny, which is again impossible for the same reason. We know that $E_2$ has an $8$-isogeny coming from the $8$-isogeny of $E_1$. Consider the path in the $2$-isogeny graph formed by these isogenies. Since the distance between $E'$ and $E_2$ in the graph is $1$ or $2$, the position of $E_2$ with respect to this path is among those marked by $\star$ in the following diagram (note that the diagram does not show the entire $2$-isogeny graph). 
    \[\begin{tikzcd}
	{E_2} & {\star_3} & {\star_5} & \bullet \\
	{\star_1} & {\star_4} \\
	{\star_2}
	\arrow[no head, from=1-1, to=1-2]
	\arrow[no head, from=1-1, to=2-1]
	\arrow[no head, from=1-2, to=1-3]
	\arrow[no head, from=1-3, to=1-4]
	\arrow[no head, from=2-1, to=3-1]
	\arrow[no head, from=2-2, to=1-2]
\end{tikzcd}\]
    If $E'$ is in the position of $\star_1$,  $\star_2$ or $\star_4$, it has an $8$-isogeny, a contradiction. In other cases, $E'$ has a $4$-isogeny to an elliptic curve with non-rational $j$-invariant and an independent $2$-isogeny, so $\rho_{E', 2}(G_K)$ is trivial. Since $E'$ has a $4$-isogeny to an elliptic curve with non-rational $j$-invariant, it follows from \cite[Proposition 3.2]{ja} that some quadratic twist $E''$ of $E'$ which is defined over $\Q$ has $2$-adic Galois image $\rho_{E'', 2^\infty}(G_\Q)$ equal to the group $H_{20}$ from the database of Rouse and Zureick--Brown in \cite{rouse-zureickbrown-2adic}. Reducing modulo $4$, this is a group of order $12$ generated by $\sm{1 & 1 \\ 3 & 0}$ and $\sm{3 & 3 \\ 0 & 1}$. The group $\rho_{E'',4}(G_K)$ is a subgroup of order $4$ which is contained inside the group of uppertriangular matrices. There is only one such subgroup. However, if this subgroup is equal to $\rho_{E'',4}(G_K)$, then $\rho_{E'',2}(G_K)$ is not trivial, because $\rho_{E'', 4}(G_K)$ contains $\sm{3 & 3 \\ 0 & 1}$.  This is a contradiction since then $E''$ (and hence also $E'$) does not have all $2$-isogenies defined over $K$.
\end{proof}

\section{Torsion over number fields of degree $5$ or $7$}\label{case57}

\subsection{Possible prime power orders of points}

\begin{lemma}\label{twist-dualno}
    Let $E$ be a $\Q$-curve over a number field $K$ of prime degree $p\geq 5$. Let $\ell \neq p$ be an odd prime. Suppose that $E$ has a point of order $\ell \neq p$ defined over $K$.
    
      Then there exists an elliptic curve $E_1$ which is $K$-isogenous to $E$ such that $E_1$ has rational $j$-invariant and a point of order $\ell$.
\end{lemma}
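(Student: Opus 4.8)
The plan is to reduce everything to a curve with rational $j$-invariant by the Cremona--Najman proposition, track the given rational point of order $\ell$ through the isogeny, and then repair the $j$-invariant by showing that the cyclic subgroup we quotient by descends to $\Q$.

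First I would fix, via the Cremona--Najman proposition and the remark following it, a cyclic $K$-isogeny $\phi\colon E\to E'$ with $j(E')\in\Q$, and let $P\in E(K)$ have order $\ell$. If $E'$ itself has a point of order $\ell$ over $K$, then $E_1:=E'$ already does the job, so I may assume it does not. Applying \Cref{tocka-prostog-reda} to $\phi$ and $P$, there is a basis $\{v_1,v_2\}$ of $E'[\ell]$ in which $\rho_{E',\ell}(\sigma)=\left(\begin{smallmatrix}\chi_\ell(\sigma)&y(\sigma)\\0&1\end{smallmatrix}\right)$ for all $\sigma\in G_K$. The hypothesis that $E'$ has no point of order $\ell$ over $K$ forces $\langle v_1\rangle$ to be the \emph{only} $G_K$-stable line in $E'[\ell]$: any other stable line is spanned by a vector $a v_1+v_2$, and stability forces this vector to be $G_K$-fixed, i.e.\ a point of order $\ell$ over $K$. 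Now set $E_1:=E'/\langle v_1\rangle$ with quotient isogeny $\psi\colon E'\to E_1$, which is defined over $K$. Since $\sigma(v_2)\equiv v_2\pmod{\langle v_1\rangle}$ for $\sigma\in G_K$, the point $\psi(v_2)$ has order $\ell$ and lies in $E_1(K)$, and $E_1$ is $K$-isogenous to $E'$, hence to $E$ via $\psi\circ\phi$.

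The remaining task, and the crux, is to show $j(E_1)\in\Q$. Here I would pick a quadratic twist $E''/\Q$ of $E'$, say by $d\in K^\times$; it is non-CM by the standing convention. By \Cref{twistanje}, in the basis of $E''[\ell]$ corresponding to $\{v_1,v_2\}$ we have $\rho_{E'',\ell}(\sigma)=\psi_d(\sigma)\,\rho_{E',\ell}(\sigma)$ for $\sigma\in G_K$; since the scalar $\psi_d(\sigma)$ does not change which lines are $G_K$-stable, $\langle v_1\rangle$ remains the unique $G_K$-stable line of $E''[\ell]$. In particular $E''$ acquires an $\ell$-isogeny over the odd-degree field $K$ (with kernel $\langle v_1\rangle$), so by \Cref{nadneparnimnistanovo} it already has an $\ell$-isogeny over $\Q$; let $L\subseteq E''[\ell]$ be its kernel. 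Then $L$ is $G_\Q$-stable, hence $G_K$-stable, so $L=\langle v_1\rangle$ by uniqueness. Therefore $\langle v_1\rangle$ is $G_\Q$-stable, $E''/\langle v_1\rangle$ is defined over $\Q$, and since the twisting isomorphism $E''\to E'$ matches up the chosen bases and so carries $\langle v_1\rangle$ to $\langle v_1\rangle$ and descends to the quotients, $E_1=E'/\langle v_1\rangle$ is a quadratic twist of $E''/\langle v_1\rangle$. Hence $j(E_1)=j(E''/\langle v_1\rangle)\in\Q$.

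Finally I would dispatch the single exceptional $j$-invariant in \Cref{nadneparnimnistanovo}: if $\ell=7$ and $j(E'')=2268945/128$, the proposition only gives a $7$-isogeny over the cubic field $\Q(\alpha)$ with $\alpha^3-5\alpha-5=0$; but here $[K:\Q]=p\ge 5$ (indeed $p\in\{5,7\}$ in the applications), so $\Q(\alpha)\not\subseteq K$, and a direct inspection of the fibre of $X_0(7)\to X(1)$ above this $j$-invariant shows $E''$ cannot acquire a $7$-isogeny over a field of degree $5$ or $7$, so this situation does not occur. The main obstacle throughout is the middle step — upgrading Galois-stability of the cyclic $\ell$-subgroup from $G_K$ to $G_\Q$ — and the mechanism for it is precisely the combination of the uniqueness of the $G_K$-stable line (which is what "$E'$ has no point of order $\ell$ over $K$" buys us) with \Cref{nadneparnimnistanovo}.
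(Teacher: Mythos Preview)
Your argument is correct, but it proceeds quite differently from the paper's own proof. The paper factors the cyclic isogeny $E\to E'$ as $E\to E_1\to E'$, where $E\to E_1$ has degree prime to $\ell$ and $E_1\to E'$ has degree $\ell^k$; then $E_1$ automatically carries the point of order $\ell$, and rationality of $j(E_1)$ is obtained by twisting the $\ell^k$-isogeny $E_1\to E'$ to an isogeny between $E'''$ and the rational curve $E''$ and then invoking results from \cite{ja} (Lemmas 4.4 and 4.7 there) to descend that isogeny to $\Q$. In contrast, you push \emph{past} $E'$: your $E_1$ is $E'/\langle v_1\rangle$, and your descent mechanism is the uniqueness of the $G_K$-stable line combined with \Cref{nadneparnimnistanovo}. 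Your route has the advantage of being self-contained within the results stated in this paper, whereas the paper's route is shorter but leans on external lemmas; conversely, the paper's factorisation avoids the case analysis on whether $E'$ already has a point of order $\ell$, and sidesteps the exceptional $j$-invariant in \Cref{nadneparnimnistanovo} entirely.

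One small improvement: your treatment of the exceptional case can be made cleaner and fully general. Since the exception has $\ell=7$, the hypothesis $\ell\neq p$ forces $p\neq 7$; and for any prime $p\ge 5$ with $p\neq 7$ one has $p\nmid|\GL_2(\Z/7\Z)|=2016$, so $\rho_{E'',7}(G_K)=\rho_{E'',7}(G_\Q)$ and the $7$-isogeny you produced over $K$ is already rational. This replaces the appeal to a ``direct inspection of the fibre'' and removes the hedge to $p\in\{5,7\}$.
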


\begin{proof}
    Let $E$ be a $\Q$-curve over $K$. Let $E'$ be the $K$-isogenous elliptic curve with rational $j$-invariant. We can factor the isogeny $E \to E'$ by writing it down as $E \to E_1 \to E'$, where the isogeny $E \to E_1$ has degree coprime to $\ell$ and the isogeny $E_1 \to E'$ has order $\ell^k$ for some $k\geq 0$. Then $E_1$ is a $\Q$-curve isogenous to $E$ which also has a point of order $\ell$ over $K$.

    If $\ell^k=1$, then $E_1$ is isomorphic to $E$ and hence has rational $j$-invariant. Otherwise, let $E''$ be the quadratic twist of $E'$ which is defined over $\Q$. Furthermore, let $E'''$ be the corresponding quadratic twist of $E_1$, such that we have the following diagram (the vertical lines are quadratic twists, and the horizontal lines are $\ell^k$-isogenies).

    \[
\begin{tikzpicture}
    \matrix (m) [matrix of math nodes, row sep=3em, column sep=6em, 
                 text height=1.5ex, text depth=0.25ex]
    {
        E_1 & E' \\
        E''' & E'' \\
    };
    
    \path[-] (m-1-1) edge node[above] {$\ell^k$} (m-1-2);
    \path[-] (m-2-1) edge node[above] {$\ell^k$} (m-2-2);
    
    \path[-] (m-1-1) edge node[left] {twist} (m-2-1);
    \path[-] (m-1-2) edge node[right] {twist} (m-2-2);

\end{tikzpicture}
\]

Note that $p$ does not divide $\ell \cdot (\ell-1)$ unless $p=5$ and $\ell=11$, so in those cases $\rho_{E'', \ell^k}(G_\Q)=\rho_{E'', \ell^k}(G_K)$ and it follows that any $\ell^k$-isogeny of $E''$ defined over $K$ is in fact already defined over $\Q$.

Now suppose that $\ell=11$ and $p=5$. Then $E''$ is a rational elliptic curve with an $11$-isogeny over a quintic extension, so by \Cref{nadneparnimnistanovo} it must have a rational $11$-isogeny. However, then $j(E'')=j(E')$ belongs to the set $\{-11^2, -11\cdot 131\}$. The elliptic curves with these $j$-invariants have only one $11$-isogeny (the rational one) over quintic extensions of $\Q$. They have no isogenies of odd prime order over $\Q$, so their isogeny class does not increase when taking quintic extensions. It follows that they are the only $\Q$-curves over quintic extensions with a point of order $11$.
\end{proof}

The above result implies that any possible prime order of a point already appears among elliptic curves with rational $j$-invariant.

In particular, by \Cref{basechanges5} there are no points of prime order greater than $11$ over degree $5$ extensions, and by \cite[Proposition 7.7]{basechanges}, there are no points of prime order greater than $7$ over degree $7$ extensions. Since the points of order $11$ have just been discussed, we may now only focus on powers of $2,3,5,7$.

\begin{lemma}
    A $\Q$-curve over a number field of degree $5$ or $7$ does not have a point of order $16$ or $27$.
\end{lemma}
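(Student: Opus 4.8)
The plan is to follow the broad strategy of \Cref{eliminiraj16}, but to replace the database search by an elementary order argument which is available precisely because the extension degree $p\in\{5,7\}$ is coprime to $\lvert\GL_2(\Z/N\Z)\rvert$ for $N\in\{16,27\}$. Suppose then that $E/K$ is a $\Q$-curve with $[K:\Q]=p\in\{5,7\}$ having a point $P$ of order $N\in\{16,27\}$; we may assume $E$ is non-CM. By \cite[Theorem 2.7.]{cremona-najman} and the remark following it, there is a cyclic $K$-isogeny $\phi\colon E\to E'$ with $j(E')\in\Q$; since $E'$ is non-CM, it is a quadratic twist of some elliptic curve $E''/\Q$. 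As $\langle P\rangle$ is pointwise fixed by $G_K$, applying \Cref{izogenije-generalno} with $H=\{1\}$ (equivalently, \Cref{slozeni}) to $\phi$ yields a divisor $\alpha$ of $N$ such that $\rho_{E',N}(G_K)$ is conjugate into the group $J_\alpha^0$ of all matrices $\m{1 & \frac{N}{\alpha}u \\ \alpha v & z}$, and then \Cref{twistanje} shows $\rho_{E'',N}(G_K)$ is conjugate into the group $J_\alpha$ of all matrices $\m{h & \frac{N}{\alpha}u \\ \alpha v & z}$ with $h\in\{\pm1\}$.

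The key observation is that $\lvert\GL_2(\Z/16\Z)\rvert=2^{13}\cdot3$ and $\lvert\GL_2(\Z/27\Z)\rvert=2^{4}\cdot3^{9}$, both of which are coprime to $5$ and to $7$. Since the index $[\rho_{E'',N}(G_\Q):\rho_{E'',N}(G_K)]$ divides $[G_\Q:G_K]=p$ and also divides $\lvert\rho_{E'',N}(G_\Q)\rvert$, which in turn divides $\lvert\GL_2(\Z/N\Z)\rvert$, that index must be $1$. Hence $\rho_{E'',N}(G_\Q)=\rho_{E'',N}(G_K)$ is itself conjugate into $J_\alpha$. It is exactly here that $p\neq3$ is used: for $p=3$ the index can equal $3$, which is what forced \Cref{eliminiraj16} into a case analysis over the list of $2$-adic images.

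Now I would twist down to $\Q$ and manufacture a rational point of order $N$. The upper-left entry is multiplicative on $J_\alpha$, because in a product the correction term $\bigl(\tfrac{N}{\alpha}u_1\bigr)(\alpha v_2)=Nu_1v_2$ vanishes modulo $N$; thus $\sigma\mapsto h(\sigma)$ is a quadratic character $G_\Q\to\{\pm1\}$, say $h=\chi_d$ with $d\in\Q^\times$. Let $E^{(d)}/\Q$ be the quadratic twist of $E''$ by $d$. By \Cref{twistanje}, $\rho_{E^{(d)},N}(G_\Q)$ is conjugate into $J_\alpha^0$, so there is a basis $\{e_1,e_2\}$ of $E^{(d)}[N]$ with $\sigma(e_1)=e_1+\alpha v(\sigma)e_2$ and $\sigma(e_2)=\tfrac{N}{\alpha}u(\sigma)e_1+z(\sigma)e_2$ for all $\sigma\in G_\Q$. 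The subgroup $C:=\langle\alpha e_2\rangle$, of order $N/\alpha$, is $G_\Q$-stable, so $\widetilde E:=E^{(d)}/C$ is an elliptic curve over $\Q$; write $\mu\colon E^{(d)}\to\widetilde E$ for the quotient isogeny. Then $\mu$ is injective on $\langle e_1\rangle$ (since $\langle e_1\rangle\cap C=\{0\}$), so $\mu(e_1)$ has order $N$, whereas $C$ has index $\alpha$ in $\langle e_2\rangle$, so $\mu(e_2)$ has order $\alpha$ and $\alpha\mu(e_2)=0$. Consequently
\[
\sigma\bigl(\mu(e_1)\bigr)=\mu\bigl(e_1+\alpha v(\sigma)e_2\bigr)=\mu(e_1)+v(\sigma)\bigl(\alpha\mu(e_2)\bigr)=\mu(e_1)
\]
for every $\sigma\in G_\Q$, so $\widetilde E(\Q)$ contains a point of order $N\in\{16,27\}$, contradicting \Cref{mazur}.

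The argument is uniform in $\alpha$: when $\alpha=1$ the isogeny $\mu$ has degree $N$ and kills $e_2$, and when $\alpha=N$ it is trivial and $e_1$ itself is already a rational point of order $N$. The only point that requires care is the bookkeeping of the orders of $\mu(e_1)$ and $\mu(e_2)$, together with keeping the bases of $E'$, $E''$, $E^{(d)}$ and $\widetilde E$ matched up so that \Cref{twistanje} and the shape of $J_\alpha$ can be composed; I expect this to be the only mild obstacle, and it is routine. (For completeness one should also remark that CM $\Q$-curves over number fields of degree $5$ or $7$ have no points of order $16$ or $27$ by \cite{CMcase}, so the non-CM assumption at the start is harmless.)
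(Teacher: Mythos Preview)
Your proof is correct, but it takes a genuinely different route from the paper's. Both arguments hinge on the same key observation, namely that $p\in\{5,7\}$ is coprime to $\lvert\GL_2(\Z/N\Z)\rvert$ for $N\in\{16,27\}$, which forces $\rho_{E'',N}(G_K)=\rho_{E'',N}(G_\Q)$. From there the paper proceeds differently: it notes that the $K$-isogeny $E\to E'$ may be assumed to have $2$-power-times-$3$-power degree (since other prime-power factors do not affect the $2$- and $3$-primary torsion), and then, after twisting, the dual isogeny out of the rational curve $E''$ has a $G_K$-stable---hence $G_\Q$-stable---kernel. This shows that $E$ itself has rational $j$-invariant, and the contradiction comes from the known classification of torsion of such curves over degree-$5$ and degree-$7$ fields (\Cref{basechanges5}, \Cref{basechanges7}, together with Gu\v{z}vi\'c's extension to rational $j$-invariant). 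Your argument instead stays inside $\GL_2(\Z/N\Z)$: you observe that the upper-left entry of $J_\alpha$ is multiplicative, twist off the resulting quadratic character, and then quotient by the $G_\Q$-stable subgroup $\langle\alpha e_2\rangle$ to manufacture a rational elliptic curve with a rational point of order $N$, invoking only Mazur's theorem. The paper's approach is shorter and yields the stronger intermediate statement that $E$ has rational $j$-invariant; your approach is more explicit, self-contained, and has the advantage of avoiding any dependence on the degree-$5$ and degree-$7$ classification results, reducing everything directly to Mazur over $\Q$.
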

\begin{proof}Note that $5$ and $7$ do not divide the orders of $\GL_2(\Z/2\Z)$ and $\GL_2(\Z/3\Z)$.

Suppose that there is a $\Q$-curve $E$ over a degree $5$ or $7$ extension with a $16$-isogeny or a $27$-isogeny and let $E'$ be an isogenous curve with rational $j$-invariant, $E''$ the quadratic twist of $E'$ which is rational and $E'''$ the corresponding twist of $E$, as in the diagram in the proof of \Cref{twist-dualno}.  Without loss of generality we may assume that the isogeny $E \to E'$ is a $2^a 3^b$-isogeny for some $a,b \geq 0$. 

However, the corresponding $2^a3^b$-isogeny $E'' \to E'''$ is defined over $\Q$ since $5$ and $7$ do not divide $\GL_2(\Z/2\Z)$ or $\GL_2(Z/3\Z)$, so $E'''$ is a rational elliptic curve and its twist $E$ has rational $j$-invariant. The claim now follows by \Cref{basechanges5} and \Cref{basechanges7}.
\end{proof}

\begin{lemma}\label{divisibleby25}
    If $E/\Q$ is an elliptic curve with a $5$-isogeny, the index of the image of its $5$-adic Galois representation in $\GL_2(\Z_5)$ is not divisible by $25$.
\end{lemma}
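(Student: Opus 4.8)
The plan is to reduce the statement to a finite group-theoretic verification together with a check of rational points on a handful of modular curves. Since $E$ has a rational $5$-isogeny, the mod-$5$ image $\bar G:=\rho_{E,5}(G_\Q)$ lies in a Borel subgroup $B\leq\GL_2(\F_5)$ of order $80=2^4\cdot 5$, so $[\GL_2(\F_5):\bar G]$ is divisible by $[\GL_2(\F_5):B]=6$ and $v_5\bigl([\GL_2(\F_5):\bar G]\bigr)\leq 1$. By Serre's open image theorem $G:=\rho_{E,5^\infty}(G_\Q)$ has finite index, and since the $5$-adic image of an elliptic curve over $\Q$ has level dividing $25$, that index is already attained modulo $25$: writing $G_{25}:=\rho_{E,25}(G_\Q)$ and identifying the kernel of $\GL_2(\Z/25\Z)\to\GL_2(\F_5)$ with $M_2(\F_5)$ via $I+5A\mapsto A$, one has
\[
v_5\bigl([\GL_2(\Z_5):G]\bigr)=v_5\bigl([\GL_2(\F_5):\bar G]\bigr)+\bigl(4-\dim_{\F_5}K\bigr),\qquad K:=\ker(G_{25}\to\bar G).
\]
Hence $25\mid[\GL_2(\Z_5):G]$ would force either $5\mid|\bar G|$ together with $\dim_{\F_5}K\leq 2$, or $5\nmid|\bar G|$ together with $\dim_{\F_5}K\leq 3$; the goal is to exclude both.

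Next I would use that $K$ is an $\F_5[\bar G]$-submodule of $M_2(\F_5)$ for the conjugation action, that surjectivity of the cyclotomic character forces the trace map $K\to\F_5$ (given by $I+5A\mapsto\operatorname{tr}A$) to be onto, and that $\bar G$ lies in a Borel. When $5\mid|\bar G|$ the group $\bar G$ contains the unipotent radical of $B$, and a short computation shows that every $\bar G$-submodule of $M_2(\F_5)$ of dimension $\leq 2$ is contained in the upper-triangular subalgebra; when $5\nmid|\bar G|$ the group $\bar G$ is conjugate into the diagonal torus (so $E$ carries two independent rational $5$-isogenies), and the submodule and trace constraints again confine $K$ to a proper parabolic subalgebra. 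In either failure scenario $G_{25}$ is conjugate into a very restricted proper subgroup of $\GL_2(\Z/25\Z)$: $E$ must admit a rational cyclic $25$-isogeny in the first case, and a rational cyclic $25$-isogeny together with a second independent rational $5$-isogeny in the second (up to dualising). These constraints single out, up to conjugacy, only finitely many candidate groups $G_{25}$, each corresponding to a modular curve $X_\Gamma$ of level dividing $25$ — one of the curves with $\Gamma_1(25)\leq\Gamma\leq\Gamma_0(25)$, or the curve recording a $25$-isogeny together with a transverse $5$-isogeny.

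Finally I would conclude by checking that none of these modular curves has a rational point yielding a non-CM elliptic curve over $\Q$: the relevant $X_\Gamma$ all have genus $\geq 1$, and for the genus-$1$ cases one verifies that the rational points are cusps or CM points. Equivalently — and this is probably the cleanest way to present it — one may simply consult the classification of $5$-adic Galois images of elliptic curves over $\Q$, observe that the images coming from curves with a $5$-isogeny form a short explicit list (with indices such as $6$, $24$, $30$), and note that none of these indices is divisible by $25$. The main obstacle is this last step: assembling the complete finite list of subgroup configurations meeting the numerical bound and confirming that the associated modular curves contribute nothing. The case demanding the most care is the split-Cartan case, where the single permitted power of $5$ already arises at level $5$, so \emph{any} further degeneration modulo $25$ must be ruled out; there one uses that two independent rational $5$-isogenies together with a rational $25$-isogeny would force $j(E)$ onto a modular curve of genus $\geq 2$ (or a rank-$0$ genus-$1$ curve), which has no suitable rational points.
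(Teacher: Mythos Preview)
Your approach is vastly more complicated than the paper's, which consists of a single sentence: the result follows immediately from \cite[Theorem~2]{greenberg}, since that theorem implies that the $5$-adic image of a rational elliptic curve with a $5$-isogeny contains a Sylow pro-$5$ subgroup of $\GL_2(\Z_5)$, whose index is prime to $5$.

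There is a genuine gap in your argument. The reduction to a finite problem hinges on the assertion that ``the $5$-adic image of an elliptic curve over $\Q$ has level dividing $25$''. You state this without justification, but it is not elementary: it is essentially a consequence of Greenberg's theorem (or of the full classification of $5$-adic images), and is at least as deep as the lemma you are trying to prove. Without it, your formula $v_5([\GL_2(\Z_5):G])=v_5([\GL_2(\F_5):\bar G])+(4-\dim_{\F_5}K)$ is only an inequality, and a small $K$ modulo $25$ says nothing about higher-level contributions to the index. So the argument as written is circular.

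Even granting the level bound, the module-theoretic analysis is only sketched. The step from ``$K$ is contained in the upper-triangular subalgebra'' to ``$E$ has a rational cyclic $25$-isogeny'' is not automatic: $K\subseteq\mathfrak b$ constrains only the kernel of $G_{25}\to\bar G$, not the full group $G_{25}$, and one must still argue that $G_{25}$ itself is Borel modulo $25$. Likewise, the claim that surjectivity of the determinant forces the trace on $K$ to be surjective needs an argument (it can fail in principle when $5\mid|\bar G|$; one must use the module structure, not just the determinant). Your alternative route---simply reading off the indices from the known list of $5$-adic images---is correct and would dispose of the lemma immediately, but then the entire first part of your proposal is redundant. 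Either way, the efficient proof is the paper's: cite Greenberg and be done.
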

\begin{proof}
    This follows immediately from \cite[Theorem 2.]{greenberg}, since the index of a Sylow pro-$5$ subgroup of $\GL_2(\Z_5)$ is not divisible by $25$.
\end{proof}

\begin{lemma}\label{5izogenije}
    There are no $\Q$-curves over degree $5$ number fields with a point of order $125$, and no $\Q$-curves over degree $7$ number fields with a point of order $25$.
\end{lemma}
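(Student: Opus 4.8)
The plan is to reduce to elliptic curves with rational $j$-invariant and then invoke the known classification together with the bound on the $5$-adic index. Suppose first that a $\Q$-curve $E$ over a degree $5$ field $K$ has a point of order $125$. Using \Cref{twist-dualno}-style reasoning, I would factor the $K$-isogeny $E \to E'$ (to a curve with rational $j$-invariant) as $E \to E_1 \to E'$, where $E \to E_1$ has degree coprime to $5$ and $E_1 \to E'$ has degree a power of $5$; then $E_1$ still has a point of order $125$ over $K$ and is itself a $\Q$-curve with rational $j$-invariant-or-not. If the $5$-power part of the isogeny is trivial, $E_1$ already has rational $j$-invariant, and I would want to contradict \Cref{basechanges5} (which allows $\Z/25\Z$ but not $\Z/125\Z$). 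Otherwise I would pass to the rational quadratic twists $E'' / \Q$ of $E'$ and $E''' $ of $E_1$, and use \cite[Lemma 4.4. and Lemma 4.7.]{ja} exactly as in \Cref{twist-dualno} to conclude that the $5$-power isogeny $E'' \to E'''$ is already defined over $\Q$ — but this requires the intermediate curve to be a $\Q$-curve, which it is, so $E'''/\Q$ is rational and $E_1$ has rational $j$-invariant. Either way we land on a rational elliptic curve.

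So it suffices to bound the order of a point on an elliptic curve with rational $j$-invariant (equivalently, on a quadratic twist $E''/\Q$) over a degree $5$ field. The key mechanism: a point of order $125$ over $K$ forces, via \Cref{slozeni} applied to the cyclic isogeny $E_1 \to E'$, that $\rho_{E', 125}(G_K)$ — and hence $\rho_{E'', 125}(G_K)$, up to a quadratic twist of the character — is conjugate into the group of matrices $\sm{h & \frac{125}{\alpha}u \\ \alpha v & z}$ with $h \in \{\pm 1\}$, for some $\alpha \mid 125$. If $\alpha \in \{1, 125\}$ then $E''$ has a $125$-isogeny over $K$, hence a $5$-power isogeny; if $\alpha = 25$ or $5$ then $E''$ still has a $5$-isogeny over $K$, hence over $\Q$ by \Cref{nadneparnimnistanovo}. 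In every case $E''$ has a rational $5$-isogeny, so \Cref{divisibleby25} applies: the index of $\rho_{E'', 5^\infty}(G_\Q)$ in $\GL_2(\Z_5)$ is not divisible by $25$. Since $[K:\Q] = 5$, the index of $\rho_{E'', 5^\infty}(G_K)$ in $\rho_{E'', 5^\infty}(G_\Q)$ divides $5$, so the index of $\rho_{E'', 5^\infty}(G_K)$ in $\GL_2(\Z_5)$ is still not divisible by $25$. But a point of order $125$ over $K$ means $\rho_{E'', 125}(G_K)$ is conjugate into the matrices with first column $\sm{1 \\ 0}$ (after untwisting the character, into first column $\sm{\pm 1 \\ 0}$), and I would compute that the index of such a group in $\GL_2(\Z/125\Z)$ is divisible by $25$ — this is the heart of the contradiction. (Concretely: the stabilizer of a nonzero vector in $\GL_2(\Z/125\Z)$ already has index $125 \cdot 124$, and one checks $25 \mid [\GL_2(\Z/125\Z) : \text{stabilizer}]$; more care is needed for the $\pm 1$ and the $\alpha \ne 1$ variants, but each still produces a group whose index picks up the factor $25$ from the relevant congruence conditions at $5$.)

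For the degree $7$ statement — no $\Q$-curve over a degree $7$ field has a point of order $25$ — the same reduction gives a rational elliptic curve $E''/\Q$ with a $5$-isogeny and a point of order $25$ over a degree $7$ field $K$. Here I would use that $7 \nmid |\GL_2(\Z/25\Z)|$: indeed $|\GL_2(\Z/25\Z)| = 25^4 \cdot (1 - 1/5)(1 - 1/25) \cdot \text{(unit factor)}$, which has no factor of $7$, so $\rho_{E'', 25}(G_K) = \rho_{E'', 25}(G_\Q)$. Then a point of order $25$ over $K$ is a point of order $25$ over $\Q$, contradicting \Cref{mazur}.

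The main obstacle will be the bookkeeping in the degree $5$ case: verifying cleanly that in \emph{every} allowed value of $\alpha$, and after accounting for the quadratic character twisting $h$ into $\{\pm 1\}$ rather than $\{1\}$, the resulting constraint on $\rho_{E'', 125}(G_K)$ is incompatible with the index bound from \Cref{divisibleby25}. The subtle point is that \Cref{divisibleby25} bounds the index in $\GL_2(\Z_5)$ while the point of order $125$ gives information modulo $125$; one must check that the $5$-power level structure forced by a $K$-rational point of order $125$ already obstructs divisibility of the index by $25$ at level $125$, independently of higher levels. I would handle the remaining small cases ($\alpha = 5, 25$, where one only gets a $5$-isogeny rather than a $125$-isogeny) by a short Magma computation over the finitely many $5$-adic images of rational elliptic curves with a $5$-isogeny, analogous to the searches in Section \ref{cubiccase}.
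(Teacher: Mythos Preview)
Your proposal has the right ingredients (\Cref{slozeni}, \Cref{nadneparnimnistanovo}, \Cref{divisibleby25}) but combines them incorrectly in the degree $5$ case. The claim ``the index of $\rho_{E'', 5^\infty}(G_K)$ in $\GL_2(\Z_5)$ is still not divisible by $25$'' is false: from \Cref{divisibleby25} you only know the $5$-valuation of the index over $\Q$ is $\le 1$, and multiplying by the factor $[K:\Q]=5$ can raise it to $2$. More seriously, the assertion that ``a point of order $125$ over $K$ means $\rho_{E'',125}(G_K)$ is conjugate into the matrices with first column $\sm{1\\0}$'' is wrong: the point of order $125$ lives on $E$, not on $E''$; the only information you have on $E''$ is the shape coming from \Cref{slozeni}, namely containment in the group of matrices $\sm{\pm 1 & 5^a u \\ 5^{3-a}v & *}$ for some fixed $a$. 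Your first-paragraph attempt to transport the point itself to a rational-$j$ curve via the \Cref{twist-dualno} mechanism is exactly the step that breaks when $\ell=p=5$, which is why that lemma excludes $\ell=p$.

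The paper's proof runs the index argument in the opposite direction. One simply counts: the group $\bigl\{\sm{\pm 1 & 5^a u \\ 5^{3-a}v & z}\bigr\}\subset \GL_2(\Z/125\Z)$ has order at most $2\cdot 125\cdot 100=2^3\cdot 5^5$ \emph{for every} value of $a$, so its index in $\GL_2(\Z/125\Z)$ is divisible by $5^4$; dividing by $[K:\Q]=5$ shows the index of $\rho_{E'',125}(G_\Q)$ is divisible by $5^3$, contradicting \Cref{divisibleby25}. No case split on $\alpha$ and no Magma search are needed. The degree $7$ part is handled the same way: the analogous group in $\GL_2(\Z/25\Z)$ has index divisible by $5^2$, and since $7\nmid |\GL_2(\Z/25\Z)|$ this is already the index over $\Q$. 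Your degree $7$ sketch (``a point of order $25$ over $K$ is a point of order $25$ over $\Q$'') again confuses $E$ with $E''$; to salvage it you would need an additional quadratic twist to kill the $\pm 1$ in the first column before invoking Mazur, a step you do not carry out.
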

\begin{proof}
    Suppose that $E$ is a $\Q$-curve defined over a quintic extension of $\Q$ with a point of order $125$. We know that $E$ is $K$-isogenous to an elliptic curve with rational $j$-invariant $E'$. Without  loss of generality, we may assume that the isogeny degree is $5^a$ for some $a\geq 1$. If $a>3$, then $E'$ has a $625$-isogeny over a quintic extension, which is impossible by \cite[Theorem 1.3]{ja}. Thus $a\leq 3$. The elliptic curve $E$ also has a $125$-isogeny defined over $K$ whose kernel is the point of order $125$ on $E$. Since the $5$-isogeny subgraph is a line by \Cref{onlylines}, it follows that the isogeny $E \to E'$ either factors through the $125$-isogeny whose kernel is the torsion point or the isogenies are independent. If they are independent, then $E'$ has a $5^{3+a}$-isogeny, which is again a contradiction. Thus, $E \to E'$ factors through the $125$-isogeny. Then, by \Cref{slozeni}, $\rho_{E', 125}(G_K)$ is conjugate to a subgroup of the group of all matrices of the form $$\m{1 & 5^a \cdot x \\
    5^{3-a}\cdot y & *}$$ for some $a \in \{0,1,2,3\}$. 

    If $E''$ is a quadratic twist of $E'$ which is a rational elliptic curve, then $\rho_{E'', 125}(G_K)$ is conjugate to a subgroup of the group of  all matrices of the form $$\m{\pm 1 & 5^a \cdot x \\
    5^{3-a}\cdot y & *}$$ for some $a \in \{0,1,2,3\}$. 

This means that $\rho_{E'', 125}(G_K)$ has at most $2\cdot 25\cdot 4 \cdot 125=2^3\cdot 5^5$ elements. It follows that the index of $\rho_{E'', 125}(G_K)$ is divisible by $\#\GL_2(\Z/125\Z)/(2^3\cdot 5^5)=2^2\cdot 3 \cdot 5^4$, so it is divisible by $5^4$. It follows that the index of $\rho_{E'', 125}(G_\Q)$ is divisible by $5^3$. Since $E''$ has a $5$-isogeny over $\Q$, \Cref{divisibleby25} implies that $\rho_{E'', 5^\infty}(G_\Q)$ is not divisible by $25$, so we reach a contradiction.

Now suppose that $E$ is defined over a degree $7$ extension $K$ and has a point of order $25$. Let $E'$ be the isogenous curve which has rational $j$-invariant and let $5^a$ be the isogeny degree. We have $a\leq 2$ since $125$ is not a possible isogeny degree of an elliptic curve with rational $j$-invariant. 

The $5$-isogeny graph is a line so we can again conclude that the isogeny $E \to E'$ either factors through the isogeny whose kernel is a point of order $25$ or they are independent. If they are independent, then $E'$ has a $125$-isogeny over an extension of degree $7$, a contradiction, so $E \to E'$ factors through the isogeny whose kernel is the torsion point. If we let $E''$ be the quadratic twist of $E'$ which is defined over $\Q$, we know by \Cref{slozeni} that $\rho_{E'', 25}(G_K)$ is contained in the subgroup of the group of all matrices of the form $$\m{\pm 1 & 5^a \cdot x \\
    5^{2-a}\cdot y & *}$$ for some $a \in \{0,1,2\}$. Since $7 \nmid |\GL_2(\Z/5\Z)|$, it follows that  $\rho_{E'', 25}(G_K)=\rho_{E'', 25}(G_\Q)$. We can now compute that the index of $\rho_{E'', 25}(G_\Q)$ is at least $3 \cdot 2^2 \cdot 5^2$, which is again divisible by $25$, a contradiction with \Cref{divisibleby25}.
\end{proof}

\begin{lemma}
    There are no $\Q$-curves over number fields of degree $5$ or $7$ with a point of order $49$.
\end{lemma}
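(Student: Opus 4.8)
\begin{namedproof}{Proof proposal}

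The plan is to mimic the index argument from the proof of \Cref{5izogenije}, with an extra step needed for septic fields. Suppose $E/K$ is a $\Q$-curve with a point $P$ of order $49$ and $[K:\Q]\in\{5,7\}$. As usual, factor the isogeny to a curve with rational $j$-invariant as $E\xrightarrow{\phi}E'\to E''$, where $\phi$ is a cyclic $K$-isogeny, $E'/K$ has rational $j$-invariant, and $E''/\Q$ is the rational quadratic twist of $E'$. Let $\alpha\mid 49$ be the order of $\phi(P)$ in $E'$. By \Cref{slozeni} together with \Cref{twistanje}, $\rho_{E'',49}(G_K)$ is conjugate to a subgroup of
$$J_\alpha=\Bigl\{\pm\m{1 & \tfrac{49}{\alpha}u \\ \alpha v & z}:u,v\in\Z/49\Z,\ z\in(\Z/49\Z)^\times\Bigr\},$$
and for every $\alpha\in\{1,7,49\}$ one computes $|J_\alpha|=2^2\cdot 3\cdot 7^3$. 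Since $J_\alpha$ reduces modulo $7$ into a Borel subgroup, $E''$ has a $7$-isogeny over $K$, hence by \Cref{nadneparnimnistanovo} also over $\Q$; the one exceptional $j$-invariant $2268945/128$ is excluded because its $7$-isogeny is defined only over a cubic field, which is not contained in $K$ as $3\nmid[K:\Q]$.

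Next I would record the $7$-analogue of \Cref{divisibleby25}: rerunning its proof with \cite[Theorem 2.]{greenberg} at the prime $7$ gives that, for an elliptic curve over $\Q$ with a $7$-isogeny, the index of the $7$-adic image in $\GL_2(\Z_7)$ is not divisible by $49$. Since $|\GL_2(\Z/49\Z)|=2^5\cdot 3^2\cdot 7^5$, this yields $v_7\bigl(|\rho_{E'',49}(G_\Q)|\bigr)\geq 4$.

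If $[K:\Q]=5$, then $5\nmid|\GL_2(\Z/49\Z)|$ forces $\rho_{E'',49}(G_K)=\rho_{E'',49}(G_\Q)$, so $v_7\bigl(|\rho_{E'',49}(G_\Q)|\bigr)\leq v_7(|J_\alpha|)=3$, a contradiction. This disposes of the quintic case.

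The septic case $[K:\Q]=7$ is where the argument becomes tight, and it is the main obstacle: now the index $[\rho_{E'',49}(G_\Q):\rho_{E'',49}(G_K)]$ only divides $7$, so the bound above is not immediately contradictory. It does, however, force equalities: combining $v_7\bigl(|\rho_{E'',49}(G_K)|\bigr)\leq 3$ with $v_7\bigl(|\rho_{E'',49}(G_\Q)|\bigr)\geq 4$ gives $v_7\bigl(|\rho_{E'',49}(G_\Q)|\bigr)=4$ and $[\rho_{E'',49}(G_\Q):\rho_{E'',49}(G_K)]=7$. Using surjectivity of the determinant onto $(\Z/49\Z)^\times$ together with the fact that $\rho_{E'',49}(G_K)\leq J_\alpha$ controls the $2$- and $3$-parts, one pins this down to $|\rho_{E'',49}(G_\Q)|\in\{6\cdot 7^4,\,12\cdot 7^4\}$; in particular $v_7\bigl([\GL_2(\Z_7):\rho_{E'',7^\infty}(G_\Q)]\bigr)=1$, so $E''$ is a non-CM elliptic curve over $\Q$ with a $7$-isogeny whose $7$-adic index is divisible by $7$, i.e. whose $7$-adic image is strictly smaller than the generic Borel image. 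Only finitely many such $7$-adic images occur, and they appear in the classification of $\ell$-adic Galois images of rational elliptic curves; I would finish by checking, for each such group $G$ (a short Magma computation, in the spirit of the brute-force searches elsewhere in the paper), that its reduction modulo $49$ has no subgroup of index dividing $7$ conjugate into any $J_\alpha$. Isolating the correct short list of exceptional $7$-adic images and carrying out this finite verification — or, if some candidate survives, ruling out the corresponding finitely many curves and septic fields by hand — is the crux of the argument.

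\end{namedproof}
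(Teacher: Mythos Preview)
Your quintic argument is correct and essentially parallel to the paper's: both reduce to the impossibility of a rational elliptic curve admitting a $49$-isogeny or two independent $7$-isogenies. The paper phrases it directly via \Cref{slozeni} and $5\nmid|\GL_2(\F_7)|$, you go through an index count, but the content is the same.

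The septic case is where you leave a genuine gap, and the gap is self-inflicted. Your ``$7$-analogue'' of \Cref{divisibleby25} is stated too timidly. For $p=5$ the weaker bound (index not divisible by $25$) is all one can claim, because rational $25$-isogenies exist and so the exceptional clause in Greenberg's theorem (two independent $p$-isogenies somewhere in the isogeny class) can actually fire. For $p=7$ there are \emph{no} rational cyclic $49$-isogenies, hence no rational elliptic curve has two independent $7$-isogenies, and Greenberg's theorem applies with no exception: the $7$-adic image of any $E''/\Q$ with a rational $7$-isogeny contains a full Sylow pro-$7$ subgroup, so its index in $\GL_2(\Z_7)$ is prime to $7$. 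That gives $v_7\bigl(|\rho_{E'',49}(G_\Q)|\bigr)=5$, hence $v_7\bigl(|\rho_{E'',49}(G_K)|\bigr)\geq 4$ even when $[K:\Q]=7$, already contradicting $v_7(|J_\alpha|)=3$. In other words, your ``short list of exceptional $7$-adic images with $v_7(\text{index})=1$'' is empty, and the Magma search you propose is unnecessary; the proof closes in one line rather than an open-ended verification.

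For comparison, the paper does not invoke Greenberg in the septic case. It quotes a theorem of Lombardo to obtain only that the $7$-adic image is defined modulo $7$ (so contains the order-$7^4$ congruence kernel), and then argues by hand: together with $-I$ and surjectivity of the determinant this forces $\rho_{E'',49}(G_K)=J_\alpha$ exactly, and adjoining the congruence kernel forces $\rho_{E'',49}(G_\Q)$ to consist of all matrices $\sm{* & 7^a x \\ 7^{2-a} y & \pm 1+7z}$, which visibly gives $E''$ either a rational $49$-isogeny or two independent rational $7$-isogenies --- contradiction. Your strengthened Greenberg route is shorter once the right statement is in hand; the paper's route is longer but entirely explicit.
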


\begin{proof}
    Let $E$ be a $\Q$-curve over a degree $5$ extension $K$ with a point of order $49$, $E'$ be the $K$-isogenous curve with rational $j$-invariant and $E''$ the twist of $E'$ which is rational. We may assume that the isogeny $E \to E'$ has degree $7^a$ for $a\geq 1$. Since $7^2$ is not a possible isogeny degree of elliptic curves with rational $j$-invariant over quintic fields, it follows that $a=1$ and that the isogeny $E \to E'$ factors through the isogeny whose kernel is the point of order $49$.  By \Cref{izogenije-generalno}, $E'$ has two independent $7$-isogenies over $K$. Then $E''$ also has two independent $7$-isogenies. Since $5 \nmid |\GL_2(\Z/7\Z)|$, they are also defined over $\Q$. However, there then exists a $\Q$-isogenous elliptic curve with a $49$-isogeny over $\Q$, which is a contradiction. 

    Now suppose that $E$ is a $\Q$-curve over a degree $7$ extension $K$ with a point of order $49$, $E'$ the $K$-isogenous curve with rational $j$-invariant and $E''$ the twist of $E'$ which is rational. We may assume that the isogeny $E \to E'$ has degree $7^a$ for $a\geq 1$. Since $7^3$ is not a possible isogeny degree of an elliptic curve with rational $j$-invariant over a septic field, it follows that the isogeny $E \to E'$ factors through the isogeny whose kernel is the point of order $49$. Furthermore, since a twist $E''$ of $E'$ has a rational $7$-isogeny, we may assume that we have the following diagram. 
    \[\begin{tikzcd}
	E & {E'} & {E_2}
	\arrow[no head, from=1-1, to=1-2]
	\arrow[no head, from=1-2, to=1-3]
\end{tikzcd}\]
Here $E'$ and $E_2$ both have rational $j$-invariant. 
    
    by \Cref{izogenije-generalno}, it follows that for the quadratic twist $E''$ of $E'$ the group $\rho_{E'', 49}(G_K)$ is a subgroup of the group of all matrices of the form $$\m{\pm 1 & 7 \cdot x \\
    7\cdot y & *}$$ Since $E''$ has a $7$-isogeny over $\Q$, \cite[Theorem 3.9]{lombardo} implies that that the $7$-adic representation of $E''$ over $\Q$ is defined modulo $7$, i.e. it contains all matrices congruent to $I$ modulo $7$.

    To summarize, $\rho_{E'', 49}(G_\Q)$, once a suitable basis has been chosen, has a subgroup of index $7$ contained in the group of all matrices of the form $\sm{\pm 1 & 7 \cdot x \\
    7\cdot y & *}$, which has order $7^3\cdot 2\cdot 6$ and $\rho_{E'', 49}(G_\Q)$ contains the group of all matrices congruent to $I$ modulo $7$, which has order $7^4$. This means that the order of  $\rho_{E'', 49}(G_K)$ must be divisible by $7^3$. 
    
    Without loss of generality, taking another quadratic twist if needed, we may assume that $\rho_{E'', 49}(G_K)$ also contains $-I$. We know that $\rho_{E''. 49}(G_K)$ has surjective determinant in $(\Z/7\Z)^\times$. The preimages under $\det: \rho_{E'', 7}(G_K) \to (\Z/7\Z)^\times$ of all elements of $(\Z/7\Z)\times$ have the same number of elements, and this number is divisible by $2$ since $\det(-I)=1$.  It follows that $\rho_{E'', 7}(G_K)$ and consequently $\rho_{E'', 49}(G_K)$, has order divisible by $12$. But then  $\#\rho_{E'', 49}(G_K)$ is divisible by $7^3 \cdot 12$, and $7\cdot\#\rho_{E'', 49}(G_K)=\#\rho_{E'', 49}(G_\Q)$ divides $7^4\cdot 12$. It follows that we must have equality everywhere, so $\rho_{E'', 49}(G_K)$ is \emph{equal} to the group of all matrices of the form $\sm{\pm 1 & 7\cdot x \\ 7\cdot y & *}.$ Since $\rho_{E'', 49}(G_\Q)$ contains all matrices of this form and all matrices congruent to $I \pmod 7$, it follows that $\rho_{E'', 49}(G_\Q)$ is the group of all matrices of the form $\sm{\pm 1+7\cdot z & 7\cdot x \\ 7\cdot y & *}$. But then $E''$ has two independent rational $7$-isogenies, a contradiction. 
\end{proof}

\subsection{Possible odd orders of points}

\begin{lemma}
    There are no $\Q$-curves over number fields of degree $5$ or $7$ with a point of order $35$.
\end{lemma}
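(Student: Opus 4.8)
The plan is to reduce everything to the classical fact that no elliptic curve over $\Q$ admits a cyclic $35$-isogeny. Suppose, for contradiction, that $E$ is a (non-CM) $\Q$-curve over a number field $K$ with $[K:\Q]\in\{5,7\}$ having a point of order $35$; then $E$ has a point of order $5$ and a point of order $7$ defined over $K$. Let $E'/K$ be the $K$-isogenous elliptic curve with rational $j$-invariant, and let $E''/\Q$ be the rational quadratic twist of $E'$. Applying \Cref{izogenija i eigen+-1} first with $p=5$ and then with $p=7$ (with the same underlying isogeny $E\to E'$), I get that $E''$ has both a $5$-isogeny and a $7$-isogeny defined over $K$.

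Next I would descend from $K$ to $\Q$ by means of \Cref{nadneparnimnistanovo}. Since $5$ is not the exceptional prime there, $E''$ has a $5$-isogeny over $\Q$; likewise $E''$ has a $7$-isogeny over $\Q$ unless $j(E'')=2268945/128$. In the generic case (when $j(E'')\neq 2268945/128$), the curve $E''$ then has a $G_\Q$-stable subgroup of order $5$ and a $G_\Q$-stable subgroup of order $7$, and since $\gcd(5,7)=1$ their sum is a $G_\Q$-stable cyclic subgroup of order $35$. Hence $E''$ has a $35$-isogeny defined over $\Q$, which is impossible, as no elliptic curve over $\Q$ admits a $35$-isogeny (equivalently, $X_0(35)(\Q)$ consists only of cusps). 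This contradiction settles the generic case.

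It remains to treat the exceptional value $j(E'')=2268945/128$ (which, not being an integer, is not a CM $j$-invariant, so \Cref{nadneparnimnistanovo} still applies). Here the cited result no longer furnishes a rational $7$-isogeny, but the argument of the previous paragraph still provides a rational $5$-isogeny of $E''$. On the other hand, by \cite[Table 3]{alvaro} the $j$-invariants of elliptic curves over $\Q$ with a rational $5$-isogeny are exactly those of the form $j=\frac{(t^2+10t+5)^3}{t}$ with $t\in\Q$, and a direct computation shows that $2268945/128=\frac{5\cdot 3^3\cdot 7^5}{2^7}$ is not of this shape; this contradiction finishes the proof. (Alternatively, in the degree $7$ case one can argue that the eight order-$7$ subgroups of the exceptional curve split into $G_\Q$-orbits of sizes $3$ and then a partition of $5$ with no part equal to $1$, so there is no orbit of size dividing $7$, and $E''$ has no $7$-isogeny over a septic field, contradicting paragraph one.)

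I expect the only genuinely delicate point to be isolating and disposing of the exceptional $j$-invariant from \Cref{nadneparnimnistanovo}; once that is handled, the proof is just a bookkeeping combination of \Cref{izogenija i eigen+-1}, \Cref{nadneparnimnistanovo}, and the non-existence of $35$-isogenies over $\Q$.
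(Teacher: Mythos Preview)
Your argument is correct, and it takes a route that differs from the paper's in a small but genuine way. The paper's proof is a one-liner: it invokes \Cref{slozeni} to conclude that some rational elliptic curve has a $35$-isogeny over the degree~$5$ or~$7$ field $K$, and then cites \cite[Theorem 1.3]{ja} (the classification of cyclic isogeny degrees for rational-$j$ curves over such $K$) to rule this out directly at the level of $K$. You instead separate into the $5$- and $7$-parts via \Cref{izogenija i eigen+-1}, descend each isogeny from $K$ to $\Q$ using \Cref{nadneparnimnistanovo}, and then invoke the classical non-existence of rational $35$-isogenies. Both approaches are short; the paper's is terser because the cited result already packages the descent, while yours is more self-contained since it ultimately rests only on $X_0(35)(\Q)$ having no non-cuspidal points.

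One simplification: your treatment of the exceptional $j$-invariant in \Cref{nadneparnimnistanovo} is unnecessary here. The proposition asserts that the \emph{only} new $7$-isogeny appearing over an odd-degree number field is over the specific cubic field $\Q(\alpha)$ with $\alpha^3-5\alpha-5=0$; since a cubic field cannot embed into a number field of prime degree $5$ or $7$, the exception simply does not arise in the cases at hand, and the descent of the $7$-isogeny to $\Q$ goes through unconditionally. Your verification that $j=2268945/128$ admits no rational $5$-isogeny is a valid alternative and would also close the case, but it is more work than required. The parenthetical alternative you sketch for degree $7$ (about orbit sizes on the eight lines) is too vague as written to stand as a proof; if you want to keep it, you would need to actually compute the orbit decomposition for that image.
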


\begin{proof}
    If $E$ is such a $\Q$-curve, then there exists an elliptic curve with rational $j$-invariant in the isogeny class with a $35$-isogeny over an extension of degree $5$ or $7$, which is impossible by \cite[Theorem 1.3]{ja}.
\end{proof}

\begin{lemma}
    There are no $\Q$-curves over number fields of degree $5$ or $7$ with a point of order $15$ or $21$.
\end{lemma}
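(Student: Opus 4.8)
The plan is to run the argument used just above for order $35$, except that $15$- and $21$-isogenies do occur over $\Q$, so citing \cite[Theorem 1.3]{ja} does not immediately finish the proof; instead I would reduce to a finite list of $j$-invariants together with a finite search over Galois subgroups. Suppose $E/K$ is a $\Q$-curve with a point $P$ of order $15$, where $p:=[K:\Q]\in\{5,7\}$; the order-$21$ case is parallel, with $X_0(15)$ replaced by $X_0(21)$ and the prime $5$ replaced by $7$ throughout. Fix a cyclic $K$-isogeny $\phi\colon E\to E'$ with $E'/K$ of rational $j$-invariant and let $E''/\Q$ be the quadratic twist of $E'$ defined over $\Q$. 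By \Cref{slozeni} (with $N=15$ and $\alpha$ the order of $\phi(P)$, so $\alpha\mid 15$) followed by \Cref{twistanje}, $\rho_{E'',15}(G_K)$ is conjugate into the group $G_\alpha$ of all matrices of the form
$$\m{\pm 1 & \tfrac{15}{\alpha}u \\ \alpha v & z}.$$
Reducing $G_\alpha$ modulo $3$ and modulo $5$, one checks that for every divisor $\alpha$ of $15$ both $\rho_{E'',3}(G_K)$ and $\rho_{E'',5}(G_K)$ lie in Borel subgroups, so $E''$ has a $3$-isogeny and a $5$-isogeny over $K$. Since $K$ has odd degree $5$ or $7$, it does not contain the cubic field appearing in the exceptional case of \Cref{nadneparnimnistanovo}, so that proposition applies and upgrades these to a $3$-isogeny and a $5$-isogeny over $\Q$, hence to a $15$-isogeny over $\Q$.

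Because $X_0(15)$ has Mordell--Weil rank $0$ over $\Q$ (and likewise $X_0(21)$), there are only finitely many $j$-invariants of elliptic curves admitting a rational $15$-isogeny (resp. $21$-isogeny); I would list them explicitly. For each such $j_0$ it suffices to work with a single curve $C/\Q$ having $j(C)=j_0$, since the condition ``$\rho_{E'',15}(G_K)$ is conjugate into some $G_\alpha$'' does not change under replacing $C$ by a quadratic twist: each $G_\alpha$ contains $-I$ and is stable under negation, while quadratic twisting alters $\rho$ only by the sign character, so $\rho_{C,15}(\sigma)=\pm\rho_{E'',15}(\sigma)$ for every $\sigma$. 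When $p=7$ one has $7\nmid|\GL_2(\Z/15\Z)|=2^9\cdot 3^2\cdot 5$, whence $\rho_{E'',15}(G_K)=\rho_{E'',15}(G_\Q)$, and it remains only to verify for each of the finitely many $C$ that $\rho_{C,15}(G_\Q)$ is not conjugate into any $G_\alpha$ --- a direct computation. The analogous easy case for order $21$ is $p=5$, since $5\nmid|\GL_2(\Z/21\Z)|=2^9\cdot 3^3\cdot 7$.

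This leaves order $15$ over a quintic field and order $21$ over a septic field, which is the crux. Here $p$ divides $|\GL_2(\Z/\ell\Z)|$ for the relevant prime $\ell$ ($5$, resp.\ $7$), so $\rho_{E'',15}(G_K)$ can be a proper subgroup of $\rho_{E'',15}(G_\Q)$; it has index dividing $5$ there, its determinant is still all of $(\Z/15\Z)^\times$ because $\gcd(5,8)=1$, it still contains $-I$ after a harmless further twist, and the drop occurs entirely in the mod-$5$ part since $5\nmid|\GL_2(\Z/3\Z)|$. I would therefore enumerate with Magma, for each of the finitely many $C$, all subgroups of $\langle\rho_{C,15}(G_\Q),-I\rangle$ of index dividing $2p$ that have surjective determinant and contain $-I$, and check that none of them is conjugate into any $G_\alpha$, in exactly the style of the brute-force search in the proof of \Cref{eliminiraj2x10}.

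The main obstacle is precisely this last step: once the Galois image over $K$ is allowed to be a proper subgroup of the image over $\Q$, one can no longer simply read off the mod-$15$ (resp. mod-$21$) image of a rational curve, and must rule out every admissible index-$p$ subgroup while correctly carrying along the triangularity and $\pm 1$-eigenvalue conditions encoded by the groups $G_\alpha$; that bookkeeping, rather than the reduction to a finite list of $j$-invariants, is where the real work lies.
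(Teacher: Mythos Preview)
Your approach is correct and shares the paper's overall strategy---reduce to the four rational $j$-invariants admitting a $15$-isogeny (resp.\ $21$-isogeny), then eliminate every candidate for $\rho_{E'',N}(G_K)$ among the index-dividing-$p$ subgroups of $\rho_{E'',N}(G_\Q)$ by a finite check---but the check itself differs. You invoke \Cref{slozeni} and test conjugacy of each candidate subgroup into one of the groups $G_\alpha$; the paper instead reruns the argument of \Cref{eliminiraj15}, using only \Cref{tocka-prostog-reda} at the primes $3$ and $5$ (resp.\ $3$ and $7$) together with \Cref{twistanje}: every $\rho_{E'',15}(\sigma)$ must have eigenvalue $1$ both mod $3$ and mod $5$, or eigenvalue $-1$ both mod $3$ and mod $5$, the common sign being the value $\psi(\sigma)$ of the twist character and hence multiplicative across $\rho_{E'',15}(G_K)$. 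That eigenvalue condition is basis-free, so no conjugacy search is needed; the paper simply exhibits, in each index-$5$ subgroup for the curves in $1600.\mathrm{i}$ (and each index-$7$ subgroup for the curves in $1296.\mathrm{f}$), one or two explicit matrices whose forced signs contradict one another. Your $G_\alpha$-condition is strictly stronger---its reductions mod $3$ and mod $5$ are triangular with the common $\pm1$ in the upper-left corner, and the upper-left-entry map is a homomorphism on $G_\alpha$---so your Magma search would succeed for the same reason; the paper's formulation just replaces a conjugacy search by an eigenvalue read-off. A minor bookkeeping point: since $-I\in G_\alpha$ and adjoining $-I$ can only divide the index, ``index dividing $p$'' already suffices in your enumeration in place of $2p$.
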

\begin{proof}
    Suppose that there is a $\Q$-curve $E$ over a number field $K$ of degree $5$ or $7$ with a point of order $15$ over an extension of degree $5$ or $7$. Then an elliptic curve with rational $j$-invariant in the isogeny class has a quadratic twist which has a rational $15$-isogeny or a $21$-isogeny. We can now copy the proof of \Cref{eliminiraj15} to conclude that we must do the following for each of the four curves $E''$ from the class 1600.i:

\begin{itemize}
    \item List all index $5$ or $7$ subgroups $H$ of $\rho_{E'', 15}(G_\Q)$.
    \item In each of the subgroups $H$ from the list, find a matrix $A$ such that at least one matrix among $A \pmod 3$,  $A \pmod 5$ doesn't have $1$ as an eigenvalue, and at least one matrix among $A \pmod 3$, $A \pmod 5$ doesn't have $-1$ as an eigenvalue. 
\end{itemize}

The existence of a matrix $A \in H$ with this property implies that $H$ cannot be the image $\rho_{E'', 15}(G_K)$.

In fact, there are no index $7$ subgroups as $7$ doesn't divide the order of $\GL_2(\Z/15\Z)$.

\begin{enumerate}
    \item \textbf{Case 1600.i1} There are five index $5$ subgroups. Each of them contains a matrix of the form $\sm{2 & 3k+2 \\ 0 & 1}$, and all of them contain $\sm{11 & 0 \\ 10 & 1}$. They both have to be in $\rho_{E', 15}(G_K)$ as their negatives do not have $1$ as an eigenvalue mod $5$. However, their product does not have $1$ as an eigenvalue mod $3$.

    \item \textbf{Case 1600.i2}
    Each of the index $5$ subgroups contains a matrix of the form $\sm{2 & * \\ 0 & 11}$. Those matrices do not have eigenvalue $1$ mod $3$, and their negatives do not have eigenvalue $1$ mod $5$.
    \item \textbf{Case 1600.i3} Each of the index $5$ subgroups contains a matrix of the form $\sm{6 & 3k+2 \\ 10 & 7}$ and the matrix $\sm{11 & 5 \\ 0 &1}$, whose negatives do not have $1$ as an eigenvalue mod $5$, and whose product does not have $1$ as an eigenvalue mod $3$.
    \item \textbf{Case 1600.i4} The matrix $\sm{11 & 0 \\ 0 & 11}$ is in all index $5$ subgroups.
\end{enumerate}

We can use the same reasoning for points of order $21$. We just need to consider elliptic curves in the isogeny class 1296.f. There are four such curves. In this case, there are no index $5$ subgroups, so we need to look at index $7$ subgroups.

For each of the four curves in the isogeny class, the mod $21$ image has seven subgroups of index $7$. Each of those contains an element which does not have neither $1$ nor $-1$ as an eigenvalue modulo $7$, which completes the proof.
\end{proof}

\subsection{Combining the $2^k$-torsion with the odd part}

Note that for any $n>2$, the entire $n$-torsion $E[n]$ cannot be defined over an odd degree number field $K$, as $K(\zeta_n)$ is contained in the field $K(E[n])$ and $\zeta_n$ is not contained in $K$.

As there are no points of order $16$ this means that the possible $2^k$-torsion over $K$ is either trivial or of the form $\Z/2^k\Z$ or $\Z/2\Z\oplus \Z/2^k\Z$ for $k \in \{1,2,3\}$.

It remains to combine possible $2$-torsion with the possible groups of points of odd order.

\begin{lemma}
    There are no $\Q$-curves over number fields of degree $5$ or $7$ with a point of order $14$.
\end{lemma}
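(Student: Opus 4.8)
The plan is to push a hypothetical point of order $14$ down to an isogeny of a rational elliptic curve and then invoke the classification of rational $14$-isogenies. Suppose for contradiction that $E$ is a (non-CM) $\Q$-curve over a number field $K$ with $[K:\Q]\in\{5,7\}$ having a point of order $14$; then $E$ has a point of order $2$ and a point of order $7$, both defined over $K$. Fix a $K$-isogenous curve $E'/K$ with rational $j$-invariant and let $E''/\Q$ be the quadratic twist of $E'$ that is defined over $\Q$.

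First I would apply \Cref{izogenija i eigen+-1} with $p=2$ and with $p=7$ to conclude that $E''$ admits both a $2$-isogeny and a $7$-isogeny defined over $K$. To move the $7$-isogeny down to $\Q$: as $[K:\Q]\in\{5,7\}$ is odd, \Cref{nadneparnimnistanovo} applies, and its exceptional case only produces a $7$-isogeny over a cubic field, which is irrelevant here, so $E''$ has a $7$-isogeny over $\Q$. To move the $2$-isogeny down to $\Q$: the index $[\rho_{E'',2}(G_\Q):\rho_{E'',2}(G_K)]$ divides $[K:\Q]\in\{5,7\}$ and also divides $|\rho_{E'',2}(G_\Q)|$, which divides $|\GL_2(\F_2)|=6$; since $\gcd(6,5)=\gcd(6,7)=1$, this index is $1$, so $\rho_{E'',2}(G_K)=\rho_{E'',2}(G_\Q)$, and as the former lies in a Borel subgroup, so does the latter — i.e. $E''$ has a $2$-isogeny over $\Q$.

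Now $E''/\Q$ carries a $2$-isogeny and a $7$-isogeny over $\Q$, and since $\gcd(2,7)=1$ the sum of the two Galois-stable kernels is a cyclic $G_\Q$-stable subgroup of order $14$; hence $E''$ has a $14$-isogeny defined over $\Q$. But $E''$ is non-CM, whereas the only rational elliptic curves admitting a rational $14$-isogeny are the CM curves with $j\in\{-3375,\,16581375\}$ (these being the non-cuspidal rational points of $X_0(14)$), a contradiction. The only step I expect to require care is this final one: citing the determination of the non-cuspidal rational points of $X_0(14)$ (equivalently, that a non-CM rational elliptic curve has no $14$-isogeny over $\Q$), together with the small check that \Cref{nadneparnimnistanovo} carries no relevant exception over number fields of degree $5$ or $7$.
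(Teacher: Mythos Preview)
Your proof is correct and follows essentially the same approach as the paper: produce a $14$-isogeny on the rational twist $E''$ and invoke the classification to force CM, contradicting the non-CM hypothesis on $E$. The only cosmetic difference is that the paper cites a reference ruling out $14$-isogenies on non-CM rational elliptic curves over degree $5$ or $7$ fields directly, whereas you first descend the $2$- and $7$-isogenies separately to $\Q$ (via \Cref{nadneparnimnistanovo} and the coprimality of $|\GL_2(\F_2)|$ with $5,7$) and then appeal to the rational points of $X_0(14)$.
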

\begin{proof}
      If $E$ was such a $\Q$-curve, then there would be a rational elliptic curve $E''$ with a $14$-isogeny over an extension of degree $5$ or $7$, which is impossible by \cite[Theorem 1.3]{ja}, unless $E''$, and hence also $E$, has CM. However, by \cite[5.5 and 5.7]{CMcase}, this is also not possible.
\end{proof}

\begin{lemma}
    There are no $\Q$-curves over number fields of degree $5$ or $7$ with a point of order $18$ or $24$, or independent points of order $2$ and order $12$.
\end{lemma}
\begin{proof}
    If $E$ is a $\Q$-curve over a number field of degree $5$ or $7$ with the mentioned torsion group, we can assume that there is a $2^k\cdot 3^\ell$-isogeny to an elliptic curve $E'$ with rational $j$-invariant. Let $E''/\Q$ be the rational twist of $E'$, and let $E'''$ be the corresponding twist of $E$. Then there is a $2^k3^\ell$-isogeny $E'' \to E'''$ defined over $K$. Since $5$ and $7$ do not divide $\#\GL_2(\Z/3\Z)|$ and $\#\GL_2(\Z/2\Z)$, it follows that this isogeny is in fact defined over $\Q$, so $E'''$ is also rational. We conclude that $E$ has rational $j$-invariant. By the results in \cite{alvaro-quintic} for $K$ of degree $5$, and \cite{basechanges} for $K$ of degree $7$, and using \cite[Theorem 5.1.2]{guzvic2021torsion}, we reach a contradiction.
\end{proof}

\begin{lemma}
    There are no $\Q$-curves over number fields of degree $7$ with a point of order $20$ or independent points of order $2$ and $10.$
\end{lemma}
\begin{proof}
    We apply the same argument as in the proof of the previous lemma but with an isogeny of degree $2^k5^\ell$, noting that $7$ does not divide $\#\GL_2(\Z/5\Z)$ and $\#\GL_2(\Z/2\Z)$. We obtain an elliptic curve with rational $j$-invariant and it cannot have a point of order $20$ or independent points of order $2$ and $10$ by \cite[Theorem 5.1.2]{guzvic2021torsion}. 
\end{proof}

The proof of \Cref{mainresult7} is now complete, i.e.\ the possible torsion subgroups of elliptic curves over septic fields have all been determined.. 

\begin{lemma}
    There are no $\Q$-curves over number fields of degree $5$ with independent points of order $2$ and $10$, or a point of order $20$. 
\end{lemma}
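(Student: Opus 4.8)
The plan is to mimic the structure used for the cubic case in \Cref{eliminiraj2x10} and \Cref{eliminiraj20}, but with the index changed from $3$ to $5$. Suppose $E/K$ is a $\Q$-curve over a quintic field $K$ with independent points of order $2$ and $10$ (the point of order $20$ case will follow a fortiori, exactly as in \Cref{eliminiraj20}, since a point of order $20$ forces the mod $20$ image into the smaller group $\sm{h & * \\ 0 & *}$ with $h \in \{1,11\}$). First I would apply \Cref{skok-s-izogenijom} with $M=2$, $N=5$ to obtain a $2$-isogenous curve $E_1/K$ with $\rho_{E_1,20}(G_K)$ consisting of matrices $\sm{h & * \\ 0 & *}$, $h\in\{1,11\}$, then pass along a cyclic $K$-isogeny to an elliptic curve $E'/K$ with rational $j$-invariant, and apply \Cref{izogenije-generalno} with $H=\{1,11\}$ to get some $\alpha \mid 20$ with $\rho_{E',20}(G_K)$ conjugate into the group of matrices $\sm{h & \frac{20}{\alpha}u \\ \alpha v & z}$, $h \in H$. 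Twisting to a rational curve $E''$, the image $\rho_{E'',20}(G_K)$ lies in the analogous group with $h \in \{1,9,11,19\}$.

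Next I would eliminate the cases $\alpha \in \{1,4,5,20\}$ because then $E''$ has a $20$-isogeny over $K$, and $20$ is not a cyclic isogeny degree of an elliptic curve with rational $j$-invariant over a quintic field (by the classification in \cite{ja} — this is the analogue of the cubic statement used in \Cref{eliminiraj2x12}). So $\alpha\in\{2,10\}$, and in both cases $E''$ has a $5$-isogeny over $K$, hence a $5$-isogeny over $\Q$ by \Cref{nadneparnimnistanovo}. The key difference from the cubic case is that the relevant index is now $5$ rather than $3$: since $\rho_{E'',20}(G_\Q)$ is conjugate to a subgroup of $\GL_2(\Z/20\Z)$ and $\rho_{E'',20}(G_K)$ has index dividing $[K:\Q]=5$ in it, I would run the same brute-force search as in \Cref{eliminiraj2x10} but looking for subgroups $B \le \GL_2(\Z/20\Z)$ admitting an index-dividing-$5$ (i.e. index $1$ or $5$) subgroup $A$ conjugate into $J = \{\sm{h & 10u \\ 2v & z}: h\in\{1,9,11,19\}\}$ (and its transpose for $\alpha=10$), with $A$ containing $-I$, $\det A = \det B = (\Z/20\Z)^\times$, $B$ mod $5$ conjugate into upper-triangular matrices, and $B$ mod $4$ an admissible mod-$4$ image from \cite{rouse-zureickbrown-2adic}. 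Then for each surviving $B$ I would read off the forced mod-$4$ and mod-$5$ shape, obtain a parametrization of the $j$-invariant (via the $2$-adic models $X_i$ in \cite{rouse-zureickbrown-2adic} for the $2$-part, and Table 3 of \cite{alvaro} for the $5$-isogeny part), and check that the resulting genus-$\le 1$ curve $C$ has no rational points, exactly as the curve $C: x^2+1728 = (t^2+10t+5)^3/t$ was handled in \Cref{eliminiraj2x10}.

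The main obstacle I expect is bookkeeping rather than conceptual: when the index is $5$ instead of $3$, the list of candidate subgroups $B$ produced by the brute-force search may be different (possibly larger, since more index-$5$ subgroups exist in the relevant groups), and I cannot be certain a priori that they all funnel into a single maximal $B$ whose mod-$4$ reduction pins down the $2$-adic model as cleanly as $B_{12}$ did in the cubic case. If several distinct mod-$4$ shapes survive, I would need to run the corresponding fiber-product-curve argument once per shape; each such curve should still be of genus $\le 1$ and have no rational points (or only cusps), but verifying this for each is where the Magma-assisted computation does the real work. A secondary subtlety is confirming that $20$ is genuinely excluded as a cyclic isogeny degree over quintic fields in \cite{ja}; if that reference only covers cubic fields explicitly, I would instead argue directly that a rational $j$-invariant curve cannot have a $4$-isogeny and an independent $5$-isogeny over a quintic field (using that $5 \nmid |\GL_2(\Z/4\Z)|$ and $5 \nmid |\GL_2(\Z/5\Z)|$, so such isogenies would descend to $\Q$, contradicting the nonexistence of rational $20$-isogenies).
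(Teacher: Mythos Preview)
Your approach is correct in principle but unnecessarily heavy. The brute-force search you propose for $\alpha\in\{2,10\}$ is exactly what was needed in the cubic case because $3\mid|\GL_2(\Z/2\Z)|=6$, so $2$-power isogeny structure over a cubic field need not descend to $\Q$. Over a quintic field this obstruction disappears: $5\nmid|\GL_2(\Z/2\Z)|$ (and indeed $5\nmid|\GL_2(\Z/4\Z)|=96$), so any $4$-isogeny or any pair of independent $2$-isogenies that $E''$ has over $K$ is automatically defined over $\Q$. The paper's proof exploits this directly: after passing to $E_1$ with a $20$-isogeny and then to the rational-$j$ curve $E'$ and its rational twist $E''$, \Cref{slozeni} gives $E''$ a $5$-isogeny over $K$ together with either a $4$-isogeny or two independent $2$-isogenies over $K$; the $2$-part descends to $\Q$ by the index argument, the $5$-isogeny descends to $\Q$ by \Cref{nadneparnimnistanovo}, and one obtains (possibly after one more rational $2$-isogeny) a rational elliptic curve with a $20$-isogeny, which is impossible. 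No subgroup enumeration or curve-point search is needed.

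You almost saw this yourself in your ``secondary subtlety'' paragraph, but you applied the descent only to the cases $\alpha\in\{1,4,5,20\}$. In fact for $\alpha\in\{2,10\}$ the mod-$4$ image of $E''$ lands in matrices that are diagonal modulo $2$, so $E''[2]\subset E''(K)$ and you again have two independent $2$-isogenies over $K$; the same descent kills these cases too. One small correction: your claim that $5\nmid|\GL_2(\Z/5\Z)|$ is false ($|\GL_2(\Z/5\Z)|=480=2^5\cdot3\cdot5$); the $5$-isogeny descends via \Cref{nadneparnimnistanovo}, not via an index argument.
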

\begin{proof}
   Let $E$ be a $\Q$-curve over a number field $K$ of degree $5$ with two independent points of order $2$ and $10$ or with a point of order $20$. In both of these cases, $E$ is $K$-isogenous to an elliptic curve $E_1$ with a $20$-isogeny. The elliptic curve $E_1$ is $2^a \cdot 5^b$-isogenous to an elliptic curve $E'$ with rational $j$-invariant. We may factor this $2^a\cdot 5^b$-isogeny as a $5^b$-isogeny $E\to E_2$ and a $2^a$-isogeny $E_2 \to E'$. Since $5$ does not divide $\#\GL_2(\Z/2\Z)$, it follows that $E_2$ also has rational $j$-invariant, so we may assume that $a=0$ and that $E_1$ is $5^b$-isogenous to $E'$.

    However, $E'$ has a $4$-isogeny over $K$ since $E_1$ has a $4$-isogeny. Since it also has a $5$-isogeny, it has a $20$-isogeny. But $20$ is not a possible isogeny degree of an elliptic curve with rational $j$-invariant, so we have reached a contradiction. 
\end{proof}
\begin{lemma}
    There are no $\Q$-curves over number fields of degree $5$ with a point of order $50$.
\end{lemma}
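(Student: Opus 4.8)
The plan is to argue by contradiction, following the pattern of the proofs of \Cref{5izogenije} and \Cref{eliminiraj2x10}. Suppose $E$ is a $\Q$-curve over a quintic field $K$ with a point $P$ of order $50$; we may assume $E$ is non-CM, since the CM case is excluded by \cite[5.5 and 5.7]{CMcase}. As usual write $E \xrightarrow{\phi} E' \xrightarrow{\sim} E''$, where $\phi$ is a cyclic isogeny defined over $K$, $E'/K$ has rational $j$-invariant, and $E''/\Q$ is a quadratic twist of $E'$. Let $\alpha \mid 50$ be the order of $\phi(P)$ in $E'$. By \Cref{slozeni} together with \Cref{twistanje}, $\rho_{E'',50}(G_K)$ is conjugate to a subgroup of the group of matrices $\m{\pm 1 & \frac{50}{\alpha}u \\ \alpha v & z}$. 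Two facts about $E''$ will be used throughout. First, since $E$ has a point of order $2$ over $K$ (namely $25P$), the curve $E'$, hence $E''$, has a $2$-isogeny over $K$ by \Cref{tocka-prostog-reda}; as $E''$ is defined over $\Q$ and its $2$-division polynomial is a cubic, which cannot acquire a root in a quintic field without already having one over $\Q$ (because $\gcd(3,5)=1$), $E''$ in fact has a rational $2$-isogeny. Second, since $E$ has a point of order $5$ (namely $10P$), \Cref{izogenija i eigen+-1} gives $E''$ a $5$-isogeny over $K$, and then \Cref{nadneparnimnistanovo} gives $E''$ a rational $5$-isogeny.

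For $\alpha \in \{1,2,25,50\}$ (equivalently $\gcd(\alpha,50/\alpha)=1$), reducing the matrix form modulo $2$ and modulo $25$ shows that, in a suitable basis, $E''$ has both a $2$-isogeny and a $25$-isogeny over $K$, or directly a $50$-isogeny; since these kernels have coprime order they add up to a $G_K$-stable cyclic subgroup of order $50$. Thus $E''$, a curve with rational $j$-invariant, would admit a $50$-isogeny over the quintic field $K$, contradicting \cite[Theorem 1.3]{ja}.

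The remaining case $\alpha \in \{5,10\}$ is the crux. Here the matrix form reduces modulo $5$ to $\m{\pm 1 & 0 \\ 0 & z}$, so $\rho_{E'',5}(G_K)$ lies in a split Cartan subgroup; since $\zeta_5 \notin K$ forces the determinant to be surjective on $G_K$, it has order $4$ or $8$ of a prescribed shape, and in particular $E''$ has two independent $5$-isogenies over $K$. Reducing modulo $25$ gives $\rho_{E'',25}(G_K) \subseteq \{\m{\pm 1 & 5a \\ 5b & z}\}$, a group of order at most $1000$; if the index of $\rho_{E'',25}(G_K)$ in $\rho_{E'',25}(G_\Q)$ were $1$, the index of $\rho_{E'',25}(G_\Q)$ in $\GL_2(\Z/25\Z)$ would be divisible by $25$, contradicting \Cref{divisibleby25}, so this index is $5$ and hence $K \subseteq \Q(E''[25])$. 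At this point the finish is computational: one enumerates the finitely many $5$-adic images of rational elliptic curves carrying a rational $5$-isogeny that are compatible with the above mod-$5$ and mod-$25$ constraints and with a rational $2$-isogeny, and for each survivor one uses the modular parametrization of $5$-isogenies $j=(t^2+10t+5)^3/t$ together with the relevant $2$-adic family — exactly as in the proof of \Cref{eliminiraj2x10} — to produce an explicit auxiliary curve of positive genus and verify that it has no rational point compatible with $E$ carrying a point of order $50$.

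The main obstacle is this last case. Unlike the order-$125$ situation of \Cref{5izogenije}, the passage from $G_K$ to $G_\Q$ swallows the one extra power of $5$ that would otherwise contradict \Cref{divisibleby25} outright, and a purely group-theoretic estimate does not close the gap: Borel subgroups of $\GL_2(\F_5)$ do contain subgroups of the relevant index whose reduction into the split Cartan has exactly the required order. One must therefore combine the split-Cartan shape of the mod-$25$ image, the two rational $5$-isogenies, the rational $2$-isogeny, and a Magma-assisted modular-curve computation; organizing the finitely many surviving $5$-adic images and ruling each out against the $j$-invariant constraint coming from $E$ is the delicate, computation-heavy step.
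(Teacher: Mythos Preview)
Your setup and the handling of $\alpha\in\{1,2,25,50\}$ are fine and match the paper. The gap is in the $\alpha\in\{5,10\}$ case: you correctly observe that \Cref{divisibleby25} by itself is one factor of $5$ short (the passage from $G_K$ to $G_\Q$ can eat exactly one power of $5$), but you then propose a Magma-and-modular-curves finish that you do not carry out. As written this is a proof sketch, not a proof, and it is unclear that the promised computation would terminate cleanly.

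The paper closes this case without any further computation by invoking the \emph{full} statement of Greenberg's theorem \cite[Theorem~2]{greenberg} rather than its corollary \Cref{divisibleby25}. Greenberg's dichotomy says: for a non-CM $E''/\Q$ with a rational $5$-isogeny, either $\rho_{E'',5^\infty}(G_\Q)$ contains a Sylow pro-$5$ subgroup of the Borel, or some curve in the $\Q$-isogeny class of $E''$ has two independent rational $5$-isogenies. In the first branch, the mod-$25$ image has order divisible by $5^5$, but your bound $\lvert\rho_{E'',25}(G_K)\rvert\le 1000$ together with $[K:\Q]=5$ gives $\lvert\rho_{E'',25}(G_\Q)\rvert\le 5000$, and $5^5\nmid 5000$, a contradiction. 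In the second branch, the curve with two independent rational $5$-isogenies still carries the rational $2$-isogeny (it is reached from $E''$ by $5$-power isogenies, which preserve $2$-torsion), so after one more rational $5$-isogeny you obtain a rational elliptic curve with a $50$-isogeny, which is impossible. This replaces your entire computational endgame with a two-line group-theoretic argument. The paper also streamlines the case split by first noting that one may take the isogeny $E\to E'$ to be a $5^k$-isogeny, which forces $\alpha\in\{2,10,50\}$ from the outset.
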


\begin{proof}
    Let $E$ be a $\Q$-curve over a quintic number field $K$ with a point $P$ of order $50$. Then there is an isogenous elliptic curve $E'$ with rational $j$-invariant. As in the previous proof, we may assume that the isogeny $E \to E'$ is a $5^b$-isogeny for some $b\geq 1$. If $b\geq 3$, then $E'$ is an elliptic curve with rational $j$-invariant and a $250$-isogeny over a quintic extension, which is impossible by \cite[Theorem 1.3]{ja}. If the isogeny $E \to E'$ does not factor through the $50$-isogeny whose kernel is the point of order $50$, then $E'$ again has a $2\cdot 5^{2+b}$-isogeny which is impossible. Thus, $b\leq 2$ and the isogeny $E \to E'$ factors through the isogeny whose kernel is the point of order $50$.

    Suppose that $b=2$. We have the following diagram. 
    \[\begin{tikzcd}
	E & \star & {E'} & \bullet
	\arrow[no head, from=1-1, to=1-2]
	\arrow[no head, from=1-2, to=1-3]
	\arrow[no head, from=1-3, to=1-4]
\end{tikzcd}\]
By \Cref{nadneparnimnistanovo}, we know that some quadratic twist $E''$ of $E'$ has a rational $5$-isogeny, so either the elliptic curve marked with $\star$ or the elliptic curve marked with $\bullet$ has rational $j$-invariant. If the elliptic curve marked with $\bullet$ has rational $j$-invariant, then it has a $250$-isogeny over $K$, a contradiction. Thus, the elliptic curve marked with $\star$ has rational $j$-invariant. Thus we may actually assume that $b=1$ and treat the elliptic curve marked with $\star$ as $E'$.

By \Cref{izogenije-generalno}, $\rho_{E', 50}(G_K)$ is contained in the subgroup of the group of all matrices of the form $\sm{1 & 5\cdot k \\
    10 \cdot j & z}$. It follows that $\#\rho_{E',25}(G_K)$ divides $5^3\cdot 4$. Let $E''$ be a quadratic twist of $E'$ which is defined over $\Q$. Then $\rho_{E'', 25}(G_K)$ has cardinality dividing $5^4\cdot 8$. Also, $E''$ has a rational $5$-isogeny by \Cref{nadneparnimnistanovo}. By \cite[Theorem 2]{greenberg}, if none of the curves in the $\Q$-isogeny class of $E''$ has two independent $5$-isogenies, then $\rho_{E'', 5^\infty}(G_\Q)$ contains a Sylow pro-$p$ subgroup. However, the Sylow pro-$p$ subgroup when viewed modulo $25$ has $5^5$ elements, which does not divide $5^4\cdot 8$. Thus, one of the curves in the $\Q$-isogeny class of $E''$ has two independent rational $5$-isogenies. Since $E''$ also has a rational $2$-isogeny coming from $E$, it follows that the isogeny class a rational elliptic curve with a $50$-isogeny, which is a contradiction.
\end{proof}

This completes the proof of \Cref{mainresult5}, i.e.\ the possible torsion subgroups over quintic fields have all been determined.

\section{Quadratic torsion of elliptic curves isogenous to those with rational $j$-invariant}\label{finalpart}

In this section, we prove \Cref{pomak-izo}.

\begin{proposition}\label{eliminiraj1418}
    Let $E$ be an elliptic curve over a quadratic number field $K$ which is $\overline \Q$-isogenous to an elliptic curve with rational $j$-invariant. Then $E(K)_{tors}$ is not isomorphic to $\Z/14\Z$ or $\Z/18\Z$.
\end{proposition}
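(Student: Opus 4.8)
\textit{Proof strategy.}\quad The plan is to upgrade the $\overline{\Q}$-isogeny to a $K$-isogeny and then reuse the machinery of \Cref{cubiccase}. Since $E$ is non-CM (CM curves being excluded throughout) and $\overline{\Q}$-isogenous to a curve with rational $j$-invariant, it is $\overline{\Q}$-isogenous to some non-CM elliptic curve $E''$ defined over $\Q$ (any curve over $\Q$ with that $j$-invariant). For an isogeny $\lambda\colon E\to E''$ over $\overline{\Q}$ and any $\tau\in G_K$, the endomorphism $\lambda^{\tau}\circ\widehat{\lambda}$ of $E''$ lies in $\operatorname{End}(E'')=\Z$ and has degree $(\deg\lambda)^{2}$, hence equals $[\pm\deg\lambda]$; cancelling $\widehat{\lambda}$ yields $\lambda^{\tau}=\pm\lambda$, so $\ker\lambda$ is $G_K$-stable. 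Thus $E$ is $K$-isogenous to $E':=E/\ker\lambda$, which is a $K$-form of $E''$, hence --- as $j(E'')\notin\{0,1728\}$ --- a quadratic twist of $E''$ over $K$; in particular $j(E')\in\Q$, and after composing with a multiplication map we may take $\phi\colon E\to E'$ cyclic. We are now in exactly the situation of \Cref{cubiccase}, with $K$ of degree $2$: a cyclic $K$-isogeny $\phi\colon E\to E'$ to a curve with rational $j$-invariant, whose quadratic twist over $\Q$ we call $E''$.

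Next I push the torsion of $E$ down to $E''$. Write $n=14$ (resp.\ $n=18$) and let $P\in E(K)$ be a point of order $n$. Applying \Cref{izogenije-generalno} to $\phi$ with $H=\{1\}$ shows $\rho_{E',n}(G_K)$ is conjugate into the group of matrices $\sm{1&\frac{n}{\alpha}u\\\alpha v&z}$, where $\alpha$ is the order of $\phi(P)$ in $E'$, and then \Cref{twistanje} shows $\rho_{E'',n}(G_K)$ is conjugate into $\sm{\pm1&\frac{n}{\alpha}u\\\alpha v&z}$. Since modulo $2$ both the twisting character and $\chi_2$ are trivial, \Cref{tocka-prostog-reda} applied to $\phi$ and the order-$2$ point of $E$ shows $\rho_{E',2}(G_K)=\rho_{E'',2}(G_K)$ fixes a nonzero vector; as $[\rho_{E'',2}(G_\Q):\rho_{E'',2}(G_K)]\le 2$ and $\GL_2(\F_2)\cong S_3$, the group $\rho_{E'',2}(G_\Q)$ also fixes a nonzero vector, so $E''$ has a rational point of order $2$.

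The heart of the matter is the odd prime $\ell\mid n$ ($\ell=7$ for $\Z/14\Z$, $\ell=3$ for $\Z/18\Z$). Reducing the above matrix shape modulo $\ell$ shows $\rho_{E',\ell}(G_K)$ lies in a Borel subgroup and every element of it has eigenvalue $1$; by \Cref{twistanje}, $E''$ therefore has an $\ell$-isogeny over $K$ and every element of $\rho_{E'',\ell}(G_K)$ has eigenvalue $\pm1$. Now $G_K$ stabilizes $1$, $2$, or $\ell+1$ of the lines in $E''[\ell]$; if it stabilizes $\ell+1$ of them, or if $\Gal(K/\Q)$ interchanges two stable lines, then looking at the diagonal characters (and using \Cref{twistanje} again) forces the mod-$\ell$ cyclotomic character to take values in $\{\pm1\}$ on $G_K$. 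For $\ell=7$ this is impossible, since $[K(\zeta_7):K]\ge 3$; hence the stable line is unique and $\Gal(K/\Q)$-invariant, so $E''$ has a $7$-isogeny over $\Q$, and combined with its rational $2$-torsion point this produces a rational cyclic $14$-isogeny on $E''$. Since $X_0(14)$ has only finitely many non-cuspidal rational points --- all of them CM --- this contradicts the non-CM hypothesis (alternatively one checks the finitely many resulting curves $E''$ directly), and the $\Z/14\Z$ case is done.

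For $\ell=3$ the cyclotomic argument is vacuous because $\F_3^{\times}=\{\pm1\}$, and this is the main obstacle: the two stable lines of $E''[3]$ may truly be swapped by $\Gal(K/\Q)$, or $\rho_{E'',3}(G_K)$ may be scalar with $K=\Q(\zeta_3)$, with no contradiction in sight. The plan is then to reduce to a finite search. Combining the finer shape of $\rho_{E',9}(G_K)$ given by \Cref{izogenije-generalno}, the twist relation of \Cref{twistanje}, and the rational $2$-torsion of $E''$, one finds that $E''$ would have to be a non-CM rational elliptic curve for which $\rho_{E'',18}(G_\Q)$ has a subgroup of index at most $2$ whose suitable quadratic twist is conjugate into $\sm{\pm1&\frac{18}{\alpha}u\\\alpha v&z}$. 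Enumerating the finitely many admissible mod-$18$ Galois images of non-CM rational elliptic curves --- assembled from the classification of $2$-adic images, the classification of mod-$9$ images, and Mazur's isogeny theorem for the odd part --- a Magma computation shows no such image occurs, which completes the proof.
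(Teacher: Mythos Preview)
Your setup---reducing the $\overline{\Q}$-isogeny to a cyclic $K$-isogeny via the endomorphism-ring trick $\lambda^{\tau}\circ\widehat{\lambda}\in\Z$---is correct and is essentially what the paper obtains by citing \cite[Lemma~A.4]{cremona-najman}.

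For $\Z/14\Z$ your argument is correct but takes a longer route than the paper's. You descend both the $2$-torsion point and the $7$-isogeny of $E''$ all the way to $\Q$, and then invoke that the non-cuspidal points of $X_0(14)(\Q)$ are CM. The paper instead stops at $K$: once one has a $14$-isogeny on a curve with rational $j$-invariant over a quadratic field, it cites \cite{Borna} to rule this out. Your cyclotomic argument for descending the $7$-isogeny (the swapped-lines and scalar cases force $\chi_7|_{G_K}$ into $\{\pm1\}$, impossible since $[\Q(\zeta_7):\Q]=6$) is self-contained and correct, so this part stands on its own, just less efficiently.

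For $\Z/18\Z$ there is a genuine gap. You correctly diagnose that the $\ell=3$ descent fails because $\F_3^{\times}=\{\pm1\}$, but your fallback is to defer to an unperformed Magma enumeration of mod-$18$ images of rational elliptic curves. Even as a strategy this is delicate: the mod-$2$ and mod-$9$ images of a rational elliptic curve are not independent in general, so ``assembling'' a list of mod-$18$ images requires real work, and you give no indication of how many candidates arise or how each is eliminated. The paper's argument is completely different and avoids computation: by \cite[\S4.6]{dujella-bosman}, every elliptic curve over a quadratic field with a point of order $18$ is $2$-isogenous to its Galois conjugate; but by \cite[Corollary~A.12]{cremona-najman}, a curve whose $j$-invariant lies in a rational isogeny class must be isogenous to its conjugate by an isogeny of \emph{square} degree, and $2$ is not a square. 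This structural two-line argument is the missing idea and should replace your proposed computation.
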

\begin{proof}
    Suppose that $E(K)$ has torsion $\Z/14\Z$. Let $E'$ be the elliptic curve with rational $j$-invariant isogenous to $E$. Applying a quadratic twist, one may assume that $E'$ itself is defined over $K$. Then, by \cite[Lemma A.4]{cremona-najman}, there is a quadratic twist $E''$ of $E'$ such that there is an isogeny $E \to E''$ defined over $K$ itself. By \Cref{izogenije-generalno}, it follows that $E''$ has a $14$-isogeny over $K$, which is not a possible isogeny degree of elliptic curves with rational $j$-invariant over quadratic fields by \cite{Borna}.

    Suppose that $E(K)$ has torsion $\Z/18\Z$. By the results in \cite[Section 4.6.]{dujella-bosman}, it follows that $E$ is $2$-isogenous to its Galois conjugate. If $E$ is isogenous to an elliptic curve with rational $j$-invariant then, using notation from \cite[Appendix]{cremona-najman}, the $j$-invariant of $E$ belongs to a rational class $\mathcal Q$. However, the degree of an isogeny from $E$ to its conjugate then has to be a square by \cite[Corollary A.12]{cremona-najman} and it cannot be $2$.
\end{proof}

\begin{namedproof}{Proof of \Cref{pomak-izo}}
    For $p>2$ this follows from \Cref{korolarglavnog}, as elliptic curves over odd prime degree number fields $K$ are exactly those isogenous to elliptic curves with rational $j$-invariant.

    For $p=2$, we know from the list of all possible torsion subgroups over quadratic fields by the results from Kamienny \cite{kamienny}, and Kenku and Momose \cite{kenku-momose}. Najman \cite[Theorem 2]{basechanges} found the list of all torsion subgroups of rational elliptic curves over quadratic extensions, and Gužvić \cite[Theorem 5.1.2.]{guzvic2021torsion} found the list of all torsion subgroups of elliptic curves with rational $j$-invariant over quadratic extensions.  
    
    It follows from these results that $\Z/14\Z$ and $\Z/18\Z$ are the only torsion subgroups that appear for some elliptic curves over quadratic fields and do not appear for elliptic curves with rational $j$-invariant. By \Cref{eliminiraj1418}, they also do not appear for elliptic curves isogenous to those with rational $j$-invariant, which completes the proof.
\end{namedproof}

\bibliographystyle{abbrv}
\bibliography{lit}

\end{document}